\theoremstyle{plain}
\newtheorem{theorem}[equation]{Theorem}
\newtheorem{corollary}[equation]{Corollary}
\newtheorem{lemma}[equation]{Lemma}
\newtheorem{proposition}[equation]{Proposition}
\newtheorem{conjecture}[equation]{Conjecture}
\theoremstyle{definition}
\newtheorem{definition}[equation]{Definition}
\newcommand{\IC}{\mathbb{C}}
\newcommand{\IH}{\mathbb{H}}
\newcommand{\IR}{\mathbb{R}}
\newcommand{\tr}{\mathrm{tr}}
\renewcommand{\deg}{\mathrm{deg}}
\newcommand\widebar{\overline}
\def\d/{/\mspace{-6.0mu}/}
\begin{document}
\title{On the extended Bogomolny equations on $\IR^2 \times \IR^+$ with real symmetry breaking}
\author{Weifeng Sun}
\date{}

\maketitle

\section*{Abstract}

In this paper, we construct solutions to the extended Bogomolny equations on $X = \IR^2 \times \IR^{+}$ with certain boundary conditions and asymptotic conditions.\\

Let $y$ be the coordinate of $\IR^{+}$. Roughly, both the boundary condition and the asymptotic condition say that a configuration (variables in the extended Bogomolny equations) approaches to certain model solutions when $y \rightarrow 0$ and $y \rightarrow +\infty$ resepctively. The boundary condition ($y \rightarrow 0$) is called ``generalized Nahm pole boundary condition" and the asymptotic condition ($y \rightarrow +\infty$) is called ``real symmetry breaking condition".\\

% To be more precise, the extended Bogomolny equations are equations on configurations. Here each configuration can be represented by a collection of 6 $su(2)$-valued functions $\Psi = (A_1, A_2, A_y, \Phi_1, \Phi_2, \Phi_3)$ on $X$. These equations are invariant under $SU(2)$ gauge transformations. \\

For each triple of polynomials with complex coefficients $(P(z), Q(z), R(z))$ with $\deg R < \deg Q$, and $Q, R$ are co-prime, $P, Q$ are monic, we construct a solution. This solution should be thought as an analog of the instanton solutions that Taubes created in \cite{Taubes2021TheR} or the solutions that Dimakis created in \cite{Dimakis2022TheField}, while their solutions satisfy a different asymptotic condition as $y \rightarrow +\infty$.\\

The idea of the construction is almost identical with Dimakis' construction in \cite{Dimakis2022TheField}. The main difference 
 is: We rely on the newly constructed model solution that satisfies the new asymptotic condition as the starting point.\\
 
 Section 1 is a breaf introduction on background settings and terminologies. Section 2 constructs the model solution. Section 3 uses an analog of Dimakis' argument to construct more solutions based on the model solution.

\paragraph{Acknowledgement}
The author thanks Panagiotis Dimakis for explaining his work in \cite{Dimakis2022TheField} and suggesting the author to work on this topic. The author is also very grateful to Siqi He for carefully and patiently explaining many technical details in his paper (with Rafe Mazzeo) \cite{He2020TheII}. The author also thanks Clifford Taubes and Rafe Mazzeo for many helpful discussions and useful advice.

\tableofcontents

\section{Introduction}\label{Section: Introduction}

In this section, we set up the basics for the extended Bogomolny equations. We need some preparation in linear algebra, which are summarized in appendix \ref{Appendix: The linear algebra}. Readers are supposed to be familiar with them.

\subsection{The extended Bogomolny equations}\label{Subsection: The extended Bogomolny equations}

Suppose $X = \IR^2 \times (0, +\infty)$. Suppose $x_1, x_2$ and $y$ are coordinates of $X$, where $y$ being the one for $\IR^{+}$. We use $\partial_1, \partial_2, \partial_y$ to represent the partial derivatives. Sometimes we treat $\IR^2$ as $\IC$ and let $z = x_1 + ix_2$ be its coordinate.\\

One way to introduce the extended Bogomolny equations on $X$ is as follows: It is a set of equations on configurations. Each configuration is a collection of 6 $su(2)$-valued functions $\Psi = (A_1, A_2, A_y, \Phi_1, \Phi_2, \Phi_3)$ on $X$. Here are the equations:

\begin{equation} 
\left\{
\begin{array}{lr}
\partial_1 A_2 - \partial_2 A_1 + [A_1, A_2] - [\Phi_1, \Phi_2] - \partial_y \Phi_3 - [A_y, \Phi_3] = 0 , &\\
 \partial_y \Phi_1 + [A_y, \Phi_1] + [\Phi_2, \Phi_3] = 0, &\\
\partial_y \Phi_2 + [A_y, \Phi_2] + [\Phi_3, \Phi_1] = 0, &\\
\partial_1 A_y - \partial_y A_1 + [A_1, A_y] + \partial_2 \Phi_3 + [A_2, \Phi_3]  = 0, &\\
\partial_2 A_y - \partial_y A_2 + [A_2, A_y] -  \partial_1 \Phi_3 - [A_1, \Phi_3] = 0, &\\
 -  \partial_2 \Phi_1 - [A_2, \Phi_1] + \partial_1 \Phi_2 + [A_1, \Phi_2] = 0, &\\
\partial_1\Phi_1 + [A_1, \Phi_1] + \partial_2 \Phi_2 + [A_2, \Phi_2] = 0. &
\end{array}
\right.
\end{equation} 

\textbf{Remark:} If the background metric on $X$ is not the Euclidean metric, then there should be additional terms in the extended Bogomolny equations that come from the metric. But this is not what we focus on in this paper.\\

These equations are $SU(2)$ gauge invariant. Moreover, all bullets of (1.1) except the first one are also $SL(2, \IC)$ gauge invariant. Here an $SU(2)$ (or $SL(2,\IC)$) gauge transformation is represented by an $SU(2)$-valued (or $SL(2,\IC)$-valued) function $g$ that sends

$$(A_1 + iA_2, A_y - i \Phi_3, \Phi_1 - i\Phi_2)$$ 
to
$$(g(A_1 + iA_2)g^{-1} - ((\partial_1 + i\partial_2)g )g^{-1}, g(A_y - i \Phi_3)g^{-1} - (\partial_y g) g^{-1}, g(\Phi_1 - i\Phi_2)g^{-1}).$$

An easy way to check that the gauge invariant properties of the equations is as follows: Define three operators acting on sections of a trivial $SL(2, \IC)$ bundle (that is to say, the bundle is just the trivial $\IC^2$ bundle but all the operators keep the $SL(2, \IC)$ structure):
$$D_1 = \partial_1 + i\partial_2 + (A_1 + iA_2), ~ D_2 = \Phi_1 - i\Phi_2, ~~ D_3 = \partial_y + A_y - i\Phi_3. $$

Then all but the first bullet of the extended Bogomolny equations can be written as
$$[D_1, D_2] = [D_2, D_3] = [D_3, D_1] = 0, $$

which is clearly $SL(2, \IC)$ gauge invariant (as commutators of operators).\\

The first bullet of the extended Bogomolny equations is equivalent to
$$[D_1, D_1^*] + [D_2, D_2^*] + [D_3, D_3^*] = 0, $$

where $*$ represents the ad-joint. Then it is obviously $SU(2)$ gauge invariant. It is not $SL(2, \IC)$ gauge invariant in general because  the $*$ operator in the equation relies on the $SU(2)$ structure.\\

It is proposed that the extended Bogomolny equations on $X$ are related with certain type of Kapustin-Witten equations, which are further conjectured to be related with the Jones polynomial of knots. The details of this proposal may be found in \cite{Gaiotto2011KnotTheory}\cite{Witten2011FivebranesKnots}\cite{Witten2012KhovanovTheory} \cite{Witten2014TwoHomology} \cite{Witten2016TwoHomology}  . Some relevant studies include \cite{Dimakis2022ModelEquations} \cite{Dimakis2022TheField}\cite{Gagliardo2014GeometricEquations}\cite{He2019ClassificationR+}\cite{He2019OpersEquations} \cite{He2019TheCondition}\cite{He2020TheII}  \cite{Mazzeo2013TheCondition} \cite{Mazzeo2017TheKnots}   \cite{Taubes2018SequencesEquations} \cite{Taubes2019TheAsymptotics} \cite{Taubes2020LecturesY} \cite{Taubes2021TheR} . The author does not attempt to list all the relevant references here.\\

According to the proposal, an interesting solution $\Psi$ should satisfy the so-called ``generalized Nahm pole boundary condition" as $y \rightarrow 0$ and one of several certain asymptotic conditions as $y \rightarrow +\infty$.\\

Full descriptions of the generalized Nahm pole condition can be found  in \cite{Gaiotto2011KnotTheory} and \cite{Mazzeo2013TheCondition} \cite{Mazzeo2017TheKnots}. Several different but essentially equivalent definitions occurred in these literature. Here is what we choose to use in this paper: When $y \rightarrow 0$, in a certain $SU(2)$ gauge, $|\Psi - \Psi_{\text{mod}}| = O(y^{-1 + \epsilon})$ for some $\epsilon > 0$, where $\Psi_{\text{mod}}$ is a model solution which already satisfies the generalized Nahm pole boundary condition (to be described later) that goes like $O(y^{-1})$ when $y \rightarrow 0$. Note that we do not require that the inequality is uniform in $z$.\\

We choose finitely many points with degrees (a positive integer assigned to each point) in $\IR^2$. These points are called ``knotted points". If the choice of points are given, then according to proposition 6.1 in \cite{He2019TheCondition}, there is a special solution to the extended Bogomolny equations that satisfies the generalized Nahm pole boundary condition and $|\Psi| \rightarrow 0$ as $y \rightarrow + \infty$. This special solution is the model solution that we use. Note that strictly speaking, different choices of knotted points give different versions of the ``generalized Nahm pole boundary condition".\\

On the other hand, there are three types of asymptotic conditions that we are interested in:

\begin{itemize}
    \item The first type says that, under a certain $SU(2)$ gauge, $|\Psi| \rightarrow 0$ as $y \rightarrow +\infty$. Solutions with this type of asymptotic condition are well studied in Section 6 of \cite{He2019TheCondition}, \cite{Taubes2021TheR}, \cite{Taubes2020LecturesY} and \cite{Dimakis2022TheField}.
    \item The second type of asymptotic condition says that $$|A_1|, |A_2|, |A_y|, |\Phi_1|, |\Phi_2| \rightarrow 0, ~~~|\Phi_3| \rightarrow 1$$ as $y \rightarrow +\infty$. This condition is called ``\textbf{real symmetry breaking condition}" in Gaiotto and Witten's paper \cite{Gaiotto2011KnotTheory}. And this is what we study in this paper. Sometimes we assume a stronger condition which says that:
    $$A_1, A_2, A_y, \Phi_1, \Phi_2 = O(y^{\epsilon}), \Phi_3 = \sigma + O(y^{\epsilon}), $$
    where $\epsilon$ is a positive real number, $\sigma$ is a constant $su(2)$ element with norm $1$.
    \item The third type of asymptotic condition is to require $|A_1|, |A_2|, |A_y|, |\Phi_3| \rightarrow 0$, but $\Phi_1$ and $\Phi_2$ approach the same constant $su(2)$ element with norm $1$. This is called ``\textbf{complex symmetry breaking condition}" in \cite{Gaiotto2011KnotTheory}. The author hopes to study it in the future.
\end{itemize}

\subsection{The metric representation} \label{Subsection: The metric representation}

 Let $E$ be the trivial $\IC^2$ bundle over $X$. It is convenient to use a pair $(H, \varphi)$ to represent (at least locally) a configuration, where $H$ is an $SU(2)$ Hermitian metric on $E$, $\varphi$ is an $sl(2, \IC)$ matrix whose items are holomorphic functions.\\

 Suppose $\varphi$ is written as
 $$ \varphi = \begin{pmatrix}
    A(z) & B(z) \\ P(z) & -A(z)
\end{pmatrix}, $$
where $A, B, P$ are holomorphic functions. Typically we assume they are polynomials. Then there is a configuration $\Psi_{\varphi}$ which trivially satisfies all but the first bullets of the extended Bogomolny equations, described by

 $$A_1 = A_2 = A_y = \Phi_3 = 0, ~ \Phi_1 - i\Phi_2 = \varphi = \begin{pmatrix}
    A(z) & B(z) \\ P(z) & -A(z)
\end{pmatrix}.$$ 

Any $SL(2, \IC)$ gauge transformation $g$ sends $\Psi_{\varphi}$ to another configuration $\Psi$ which also satisfies all but the first bullets of the extended Bogomolny equations. (Because they are $SL(2, \IC)$ gauge invariant.) Let $H = g^* g$ be the Hermitian metric. (See appendix \ref{Appendix: The linear algebra}.) Then we say $\Psi$ corresponds to the pair $(H, \varphi)$. We call the pair $(H, \varphi)$ a \textbf{metric representation} of $\Psi$. Note that this is \textbf{not} a $1$-$1$ correspondence. In fact,

\begin{itemize}
    \item Each configuration $\Psi$ can be locally represented by a pair $(H, \varphi)$ if and only if it satisfies all but the first bullets of the extended Bogomolny equations.
   \item It is possible that two different pairs $(H_1, \varphi_1)$ and $(H_2, \varphi_2)$ represent the same configuration $\Psi$. This happens if and only if there is a holomorphic $SL(2, \IC)$ gauge transformation (represented by a matrix $g$ whose items are holomorphic functions in $z$ and independent with $y$), such that
     $$H_1 = g^{*}H_2g, ~~ \varphi_1 = g^{-1}\varphi_2g.$$

     In fact, we may use different pairs of $(H, \varphi)$ at different regions to represent the same configuration. They are connected by $SL(2, \IC)$ holomorphic gauge transformations in the overlaps. So technically speaking, a configuration $\Psi$ corresponds to a Cech cocycle of pairs $(H, \varphi)$. 
     \item It is also possible that two different $\Psi_1$ and $\Psi_2$ correspond to the same pair $(H, \varphi)$. This happens if and only if $\Psi_1$ and $\Psi_2$ are $SU(2)$ gauge equivalent.  
\end{itemize}

Here is the illustration of the above statements:

 \paragraph{From $\Psi$ to $(H, \varphi)$}~\\

 Suppose $\Psi$ is a configuration such that $[D_1, D_2] = [D_2, D_3] = [D_3. D_1] = 0$. Keep in mind that the $SL(2, \IC)$ structure of $E$ is preserved under the operators $D_1, D_2, D_3$, whose definitions are in the last subsection.\\

Fix any $y = y_0 > 0$. Then $D_1$ can be viewed as a d-bar operator on sections of $E|_{y = y_0}$. So it gives $E|_{y = y_0}$ a holomorphic structure. Note that since $[D_1, D_2] = [D_1, D_3] = 0$, both $D_2$ and $D_3$ keeps this holomorphic structure. \\

On the $y = y_0$ slice, we may choose (at least locally) two holomorphic sections of $E$, namely $s_1$ and $s_2$. Since $D_1$ keeps the $SL(2, \IC)$ structure, we may assume that $s_1 \wedge s_2 = 1$ everywhere.\\

We may identify the choices of $s_1, s_2$ at different $y$ slices by requiring $D_3 s_1 = D_3 s_2 = 0$. Then $s_1$ and $s_2$ form a basis of $E$. Under this basis, $D_1$ is just $2\bar{\partial} = \partial_1 + i\partial_2$ and $D_3$ is just $\partial_y$. The operator $D_2$ is represented by an $sl(2, \IC)$ matrix whose items are holomorphic in $z$ and doesn't depend on $y$. We call this  matrix $\varphi$. Note that $[D_1, D_2] = [D_2, D_3] = [D_3, D_1] = 0$ is obvious in this basis.\\

% Let $H$ be the $2 \times 2$ matrix

% $$H = \begin{pmatrix}
%     |s_1|^2 & s_1^* s_2\\
%     s_2^*s_1 & |s_2|^2
% \end{pmatrix}, $$
% where $*$ is the transpose of the ad-joint. Then $H$ represents the $SU(2)$ structure. The first bullet of the extended Bogomolny equations is equivalent with

% $$4[\bar{\partial}, \bar{\partial}^{*H}] + [\partial_y, \partial_y^{*H}] + [\varphi, \varphi^{*H}] = 0, $$
% where the definition of $~^{*H}$ can be found in appendix xxx. \\

% In fact,
% $$\Omega(\Psi_{\varphi}, H) = 4[\bar{\partial}, \bar{\partial}^{*H}] + [\partial_y, \partial_y^{*H}] + [\varphi, \varphi^{*H}]. $$

Note that the definition of $(H, \varphi)$ depends on a choice of the holomorphic basis $s_1, s_2$. Different choices of the basis can be related by a holomorphic $SL(2, \IC)$ gauge transformation $g$ (an $SL(2, \IC)$ matrix whose items are holomorphic functions in $z$) which sends $(H, \varphi)$ to $(g^*Hg, g^{-1} \varphi g)$.\\

% It also sends $\Omega(\Psi_{\varphi}, H)$ to $g^{-1}\Omega(\Psi_{\varphi}, H) g$.\\

A warning: The holomorphic $SL(2, \IC)$ gauge transformation $g$ is \textbf{not} an actual gauge transformation on the original configuarition $\Psi$. In fact, it doesn't affect $\Psi$ at all. It only changes $(H, \varphi)$ by changing the pairs $(s_1, s_2)$ that we use to define it. Readers should not get confused by the (somehow misleading) term ``gauge transformation" that we use here.

\paragraph{From $(H,\varphi)$ to $\Psi$}~\\

Suppose we have a pair $(H, \varphi)$. Recall that the ``trivial" configuration  $\Psi_{\varphi}$ satisfies all but the first bullets of the extended Bogomolny equations.\\

We may choose an $SL(2, \IC)$ gange transformation $g$ such that $g^*g = H$. Then $g$ sends $ \Psi_{\varphi}$ to some configuration $\Psi$. Clearly $\Psi$ corresponds to $(H, \varphi)$.\\

Note that adding another $SU(2)$ gauge transformation $u$ to $g$ doesn't change $H = g^* g$. In fact, if we replace $g$ by $ug$, we still have
$$(ug)^* (ug) = g^*g = H.$$

So essentially, what we get is a configuration $\Psi$ up to an $SU(2)$ gauge transformation.\\

For each pair $(H, \varphi)$, there is a preferred way to choose $g$ which we'll frequently use in this paper: Each $SU(2)$ Hermitian metric $H$ can be written in the following format:

$$H = \begin{pmatrix}
    h + h^{-1}|w|^2 & h^{-1} \bar{w}\\
h^{-1} w & h^{-1}
\end{pmatrix}, $$
where $h$ is a positive real-valued function and $w$ is a complex-valued function.  Then we have $H = g^* g$, where $g = \begin{pmatrix}
 h^{\frac{1}{2}} & 0 \\
 h^{- \frac{1}{2}}w & h^{-\frac{1}{2}}
\end{pmatrix}$. Under the $SL(2, \IC)$ gauge transformation given by this specifically chosen $g$, the trivial configuration $\Psi_{\varphi}$ becomes

\begin{equation*} 
\left\{
\begin{array}{lr}
\Phi = \Phi_1 - i \Phi_2 = g \varphi g^{-1} = \begin{pmatrix}
    A - wB & hB \\ h^{-1}(wA + P - w^2 B + wA) & wB -A
\end{pmatrix},&\\
\Phi_3 = \dfrac{i}{2h} \begin{pmatrix}
-\partial_y h & -\partial_y \bar{w} \\
-\partial_y w &  \partial_y h
\end{pmatrix}, &\\
A_y = \dfrac{1}{2h}\begin{pmatrix}
0 & \partial_y \bar{w} \\
- \partial_y w & 0
\end{pmatrix} , &\\
A_1 = \dfrac{1}{2h}\begin{pmatrix}
- i \partial_2h &  2\partial \bar{w}\\
- 2\bar{\partial} w & i \partial_2 h
\end{pmatrix},&\\
A_2 = \dfrac{i}{2h} \begin{pmatrix}
 \partial_1 h &   2\partial \bar{w} \\
 2\bar{\partial}w & - \partial_1 h
\end{pmatrix} .
\end{array}
\right.
\end{equation*} 

 We use $\Psi_{H, \varphi}$ to denote this particular configuration that corresponds to $(H, \varphi)$. Any other configuration that corresponds to $(H, \varphi)$ is $SU(2)$ gauge equivalent to $\Psi_{H, \varphi}$.\\

For each $H = \begin{pmatrix}
    e^u + e^{-u}|w|^2 & e^{-u}\bar{w} \\ e^{-u}w & e^{-u}
\end{pmatrix}$ and $\varphi$, we write down the concrete formulas of $V(H, \varphi): = V(\Psi_{H, \varphi})$ in terms of $H$ and $\varphi$ in two special situations:

\paragraph{Special case 1}

$$H = \begin{pmatrix}
    e^u + e^{-u}|w|^2 & e^{-u}\bar{w} \\ e^{-u}w & e^{-u}
\end{pmatrix}, ~~ \varphi = \begin{pmatrix}
    0 & 0 \\ P(z) & 0
\end{pmatrix}.$$

Then $V(H, \varphi) = \dfrac{1}{2}\begin{pmatrix}
    E & \bar{F} \\ F & -E
\end{pmatrix}$, with

$$ E  = \Delta u + e^{-2u}(4|\bar{\partial} w|^2 + |\partial_y w|^2 + |P|^2),~~  F =
     e^{-u}(\Delta w - 2(\partial_y u)(\partial_y w) - 8 (\bar{\partial}w) (\partial u)). $$  

\begin{proof}
 
In this special case, suppose $h = e^u$. Then

\begin{equation*} 
\left\{
\begin{array}{lr}
\Phi = \Phi_1 - i \Phi_2 = \begin{pmatrix}
   0 & 0 \\ h^{-1} P & 0
\end{pmatrix},&\\
\Phi_3 = \dfrac{i}{2h} \begin{pmatrix}
-\partial_y h & -\partial_y \bar{w} \\
-\partial_y w &  \partial_y h
\end{pmatrix}, &\\
A_y = \dfrac{1}{2h}\begin{pmatrix}
0 & \partial_y \bar{w} \\
- \partial_y w & 0
\end{pmatrix} , &\\
A_1 = \dfrac{1}{2h}\begin{pmatrix}
- i \partial_2h &  2\partial \bar{w}\\
- 2\bar{\partial} w & i \partial_2 h
\end{pmatrix},&\\
A_2 = \dfrac{i}{2h} \begin{pmatrix}
 \partial_1 h &   2\partial \bar{w} \\
 2\bar{\partial}w & - \partial_1 h
\end{pmatrix} .
\end{array}
\right.
\end{equation*}

The first bullet of the extended Bogomolny equations is:
$$V(H, \varphi) = (\partial_1 A_2 - \partial_2 A_1 + [A_1, A_2]) - (\partial_y \Phi_3 + [A_y, \Phi_3] + [\Phi_1, \Phi_2]). $$

We have
\begin{equation*} 
\left\{
\begin{array}{lr}
    \partial_1 A_2 - \partial_2 A_1 + [A_1, A_2] &\\
        ~~~~ = \dfrac{(2i)}{h^2}\begin{pmatrix}
        h (\partial \bar{\partial} h) - |\bar{\partial} h|^2 + |\bar{\partial} w|^2 & h(\bar{\partial}\partial \bar{w}) - 2 (\partial \bar{w})(\bar{\partial}h)\\
        h(\partial \bar{\partial} w) - 2 (\bar{\partial}w) (\partial h) & - h (\partial \bar{\partial} h) + |\bar{\partial} h|^2 - |\bar{\partial} w|^2
        \end{pmatrix}, &\\
  \partial_y \Phi_3 + [A_y, \Phi_3] - [\Phi_1, \Phi_2] &\\
    ~~~~ = \dfrac{i}{2h^2} \begin{pmatrix}
    - h(\partial_y^2 h) + (\partial_y h)^2  - |\partial_y w|^2 - |P|^2 &
     - h (\partial_y^2 \bar{w}) + 2(\partial_y h)(\partial_y \bar{w})\\
    - h (\partial_y^2 w) + 2(\partial_y h)(\partial_y w)&
     h(\partial_y^2 h) - (\partial_y h)^2  + |\partial_y w|^2 + |P|^2
    \end{pmatrix}.
\end{array}
\right.
\end{equation*} 

So indeed $V(H, \varphi) = \dfrac{1}{2}\begin{pmatrix}
    E & \bar{F} \\ F & -E
\end{pmatrix}$, with

$$ E  =  h^{-2}(h \Delta h - |\nabla h|^2 + 4 |\bar{\partial} w|^2 + |\partial_y w|^2 + |P|^2) 
 = \Delta u + e^{-2u}(4|\bar{\partial} w|^2 + |\partial_y w|^2 + |P|^2),$$ $$ F = h^{-2}(h(\Delta w) - 2(\partial_y h)(\partial_y w) - 8 (\bar{\partial}w) (\partial h)) =
     e^{-u}(\Delta w - 2(\partial_y u)(\partial_y w) - 8 (\bar{\partial}w) (\partial u)). $$ 

\end{proof}

\paragraph{Special case 2}

$$H = \begin{pmatrix}
    e^u  & 0 \\ 0 & e^{-u}
\end{pmatrix}, ~~ \varphi = \begin{pmatrix}
    A(z) & B(z) \\ P(z) & -A(z)
\end{pmatrix}.$$

Then $V(H, \varphi) = \dfrac{1}{2} \begin{pmatrix}
    E & \bar{F} \\ F & -E
\end{pmatrix}$, with

$$ E  
 = \Delta u + e^{-2u} |P|^2 - e^{2u}|B|^2, ~~~  F   =  2e^u(B\bar{A}) - e^{-u}(A\bar{P} + P\bar{A}) . $$ 

\begin{proof}
    In this special case, suppose $h = e^u$. Then
\begin{equation*} 
\left\{
\begin{array}{lr}
\Phi = \Phi_1 - i \Phi_2  = \begin{pmatrix}
    A  & hB \\ h^{-1} P &  -A
\end{pmatrix},&\\
\Phi_3 = \dfrac{i}{2h} \begin{pmatrix}
-\partial_y h & 0 \\
0 &  \partial_y h
\end{pmatrix}, &\\
A_y = 0, &\\
A_1 = \dfrac{1}{2h}\begin{pmatrix}
- i \partial_2h &  0\\
0& i \partial_2 h
\end{pmatrix},&\\
A_2 = \dfrac{i}{2h} \begin{pmatrix}
 \partial_1 h &   0 \\
0 & - \partial_1 h
\end{pmatrix} .
\end{array}
\right.
\end{equation*} 

We have

\begin{equation*} 
\left\{
\begin{array}{lr}
    \partial_1 A_2 - \partial_2 A_1 + [A_1, A_2] &\\
        ~~~~ = \dfrac{(2i)}{h^2}\begin{pmatrix}
        h (\partial \bar{\partial} h) - |\bar{\partial} h|^2 & 0\\
        0 & - h (\partial \bar{\partial} h) + |\bar{\partial} h|^2 
        \end{pmatrix}, &\\
  \partial_y \Phi_3 + [A_y, \Phi_3] &\\
    ~~~~ = \dfrac{i}{2h^2} \begin{pmatrix}
    - h(\partial_y^2 h) + (\partial_y h)^2  &
     0\\
    0&
     h(\partial_y^2 h) - (\partial_y h)^2 
    \end{pmatrix}.
\end{array}
\right.
\end{equation*} 
And
$$[\Phi_1, \Phi_2] = \dfrac{1}{2i}[\Phi, \Phi^*] = \dfrac{1}{2i} (\begin{pmatrix}
    A  & hB \\ h^{-1} P &  -A
\end{pmatrix}\begin{pmatrix}
    \bar{A}  & h^{-1}\bar{P} \\ h\bar{B} &  -\bar{A}
\end{pmatrix}- \begin{pmatrix}
    \bar{A}  & h^{-1}\bar{P} \\ h\bar{B} &  -\bar{A}
\end{pmatrix}\begin{pmatrix}
    A  & hB \\ h^{-1} P &  -A
\end{pmatrix}) $$
$$ = \dfrac{1}{2i} \begin{pmatrix}
    h^2|B|^2 - h^{-2}|P|^2 & h^{-1}(A\bar{P} + \bar{P}A) - 2h(B\bar{A})\\
    h^{-1}(\bar{A}P + P\bar{A}) - 2h(\bar{B}A) & h^{-2}|P|^2 - h^2|B|^2
\end{pmatrix}. $$

So indeed
$$ E  =  h^{-2}(h \Delta h - |\nabla h|^2 - h^4|B|^2 + |P|^2) 
 = \Delta u + e^{-2u} |P|^2 - e^{2u}|B|^2,$$ $$ F = 2h(B\bar{A}) - h^{-1}(A\bar{P} + P\bar{A})  = 2e^u(B\bar{A}) - e^{-u}(A\bar{P} + P\bar{A}). $$

\end{proof}

\subsection{The deformation of a configuration}\label{Subsection: The deformation of a configuration}

In this paper, one general strategy to find a solution to the Bogomolny equations is: Construct a configuration $\Psi_0$ that satisfies all but the first bullet of the equations first. (This can usually be done using the metric representation described in subsection \ref{Subsection: The metric representation}.) And then seak an $SL(2, \IC)$ gauge transformation to make the first bullet vanish. So we need to study how the first bullet behaves under  $SL(2, \IC)$ gauge transformations.\\

Suppose $\Psi$ is a configration. We always assume it satisfies all but the first bullet of the extended Bogomolny equations. Recall that the first bullet can be written as

$$V(\Psi) := [D_1, D_1^*] + [D_2, D_2^*] + [D_3, D_3^*] = 0. $$

Suppose $u$ is an $SU(2)$ gauge transformation sending $\Psi$ to $u(\Psi)$. We may think $u$ as an $SU(2)$-valued function on $X$. Then $V(u(\Psi)) = uV(\Psi)u^{-1}$.  And the norm $|V(\Psi)|$ is invariant under this transformation.\\

On the other hand, suppose $g$ is an $SL(2, \IC)$ gauge transformation.  If in addition, $g^* = g$, then we call it a \textbf{Hermitian $SL(2, \IC)$ gauge transformation}.\\

The following fact is simple in linear algebra: For each $g \in SL(2, \IC)$, there is a unique $u\in SU(2)$ such that $ug$ is Hermitian. Because $ug(\Psi)$ and $g(\Psi)$ are $SU(2)$ equivalent. If we are not sensitive to $SU(2)$ transformations on what we get, we may only consider Hermitian $SL(2, \IC)$ gauge transformation when we deform $\Psi$.\\

Suppose $\Psi$ is a configuration. Given any section $s \in isu(2)$, there is a 1-parameter family of Hermitian $SL(2, \IC)$ gauge transformations $e^{ts}$ parameterized by $t \in \IR$. Since $sl(2, \IC) = su(2) \oplus isu(2)$, this deformation is perpendicular with $SU(2)$ gauge transformations initially.\\

On the other hand, any Hermitian $SL(2, \IC)$ valued function $g$ can be written as either $e^s$ or $- e^s$ for some $s \in isu(2)$ uniquely.  Note that $-e^s$ and $e^s$ act the same way on $\Psi$ as $SL(2, \IC)$ gauge transformations. So effectively any Hermitian $SL(2, \IC)$ gauge transformation can be represented by $e^s$ for some $s \in isu(2)$.

\paragraph{Deformation of the equation}~\\

Suppose $s$ is a section in $isu(2)$. We consider the Hermitian $SL(2, \IC)$
 gauge transformation $e^{s}$ acting on $\Psi$. What we get is written as $\Psi_s$. Then the first bullet of the extended Bogomolny equations is written as
 $$V(\Psi, s) : = V(\Psi_s) = \sum\limits_{i = 1}^3 [D_i -(D_i(e^s)) e^{-s}, D_i^* - ((D_i(e^s)) e^{-s})^*] $$
 $$ = \sum\limits_{i = 1}^3 [D_i - (D_i(e^s))e^{-s}, D_i^* + e^{-s}(D^*_i(e^s))],  $$
 where $D_1, D_2, D_3$ are given by $\Psi$. Note that the last inequality used the fact that $s^* = s$.\\

 This formula seems awful. But actually we only need two facts:

 \paragraph{The first fact} is
 $$V(\Psi, s) = V(\Psi) ~~ - ~~ (\gamma(s) + \gamma(-s)) (\Delta_{\Psi}s) ~~ + ~~ \dfrac{1}{2} (\gamma(s) - \gamma(-s)) ([V(\Psi), s]) ~~ + ~~ ``\text{remaining terms}", $$
 here the ``remaining terms" include multi-linear terms $R(s^{\otimes k} \otimes(D_i s)^{\otimes 2})$ and $R(s^{\otimes k} \otimes(D_i^* s)^{\otimes 2})$ with any integer $k \geq 0$ with convergent coefficients (comparible with the coefficients in the Taylor expansion of $e^s$). The operator $\Delta_{\Psi}$  behaves like the Laplacian, whose definition is
  $$\Delta_{\Psi} s : =  \sum\limits_{i = 1, 2, y} (\nabla_i^2s) + \sum\limits_{i = 1, 2, 3} ([\Phi_i, [\Phi_i, s]]),$$
  where $\nabla_i s = \partial_i s + [A_i, s]$, and $A_i, \Phi_i$ are six components of $\Psi$.

 \begin{proof} 
     We have $$V(\Psi_s) = \sum\limits_{i = 1}^3  [D_i - (D_i(e^s))e^{-s}, D_i^* + e^{-s}(D_i^*(e^s))] = \sum\limits_{i = 1}^3 ([D_i , D_i^*])$$ $$ + \sum\limits_{i = 1}^3(e^{-s}(D_iD_i^*(e^s)) + (D_i^*D_i(e^s))e^{-s}) + ``\text{remaning terms}"$$
     $$ = V(\Psi) + \sum\limits_{i = 1}^3 (\gamma(-s)(D_iD_i^*s) + \gamma(s)(D_i^*D_is)) + ``\text{remaining terms}". $$
Recall that $\nabla_1, \nabla_2, \nabla_y$ is the connection defined using $A_1, A_2, A_y$. And $\Phi = \Phi_1 - i\Phi_2$. We have $\nabla_i^* = \partial_i^* + A_i^* = - \nabla_i$. We have
 $$D_1D_1^*s + D_2D_2^* s + D_3D_3^*s$$ $$= (\nabla_1+i\nabla_2) (\nabla_1^* - i\nabla_2^*) s + [\Phi_1 - i\Phi_2, [\Phi_1^* + i\Phi_2^* ,s]] + (\nabla_y - i \Phi_3)(\nabla_y^* + i \Phi_3^*)s  $$
  $$= - \sum\limits_{i = 1, 2, y} (\nabla_i^2s) - \sum\limits_{i = 1, 2, 3} ([\Phi_i, [\Phi_i, s]]) - \dfrac{1}{2}[V(\Psi) ,s] = - \Delta_{\Psi}(s) - \dfrac{1}{2}[V(\Psi) ,s].$$

And

$$\sum\limits_{i = 1}^3 D_i^*D_is = \sum\limits_{i = 1}^3 (D_i^*D_is + [D_i, D_i^*]s) = - \Delta_{\Psi} s + \dfrac{1}{2}[V(\Psi), s]. $$

Thus the first fact follows directly.
  
 \end{proof}

 There are two alternative ways to write the first fact that will be useful later:

 $$V(\Psi, s) = V(\Psi) + \tilde{L}_{\Psi}(s) + \tilde{Q}(s) = V(\Psi) + L(s) + Q(s), $$
 where $\tilde{L}_{\Psi} = -(\gamma(s) + \gamma(-s))\Delta_{\Psi}$ is the second order term and 
 $\tilde{Q}(s)$ is the lower order terms; $L_{\Psi} = - \Delta_{\Psi}$ is the linear term and $Q(s)$ contains all the non-linear terms.

 \paragraph{The second fact} is a Weitzenbock type of formula:

$$<V(\Psi_s) - V(\Psi), s> = - \Delta(|s|^2) + \sum\limits_{i = 1}^3 2|v(-2s)D_i^*(s)|^2, $$
where the undefined linear algebra notations can be found in appendix \ref{Appendix: The linear algebra}.\\

This is a very mysteries (at least to the author) and happy formula. It comes from \cite{He2020TheII} as proposition 5.1 there (with some adaptions). It may be also more or less originated from Donaldson, Uhlenbeck and Yau's series of famous work on Kobayashi-Hitchin correspondence. Note that it is only true when $s \in isu(2)$, that is, Hermitian. It is generally false if only $s \in sl(2, \IC)$.

\begin{proof} $$<V(\Psi_s) - V(\Psi), s> =  <\sum\limits_{i = 1}^3 ([D_i - (D_i(e^s))e^{-s}, D_i^* + e^{-s}(D_i^* (e^{s}))] - [D_i, D_i^*]), s> $$ 
$$ = \sum\limits_{i = 1}^3 <(D_i(e^{-s}D_i^*(e^s)) + D_i^*(D_i(e^s)e^{-s}) - [(D_i(e^s)e^{-s}, e^{-s}D_i^*(e^s))]), s>.$$
The following identity can be checked directly:
$$D_i(e^{-s}D_i^*(e^s)) + D_i^*(D_i(e^s)e^{-s}) - [(D_i(e^s)e^{-s}, e^{-s}D_i^*(e^s))]$$ $$ = e^s(D_i(e^{-2s}D_i^*(e^{2s})))e^{-s} - (D_iD_i^*(e^s)- D_i^*D_i(e^s))e^{-s}. $$
Thus
$$<V(\Psi_s) - V(\Psi), s> = \sum\limits_{i = 1}^3 <e^s(D_i(e^{-2s}D_i^*(e^{2s})))e^{-s} - (D_iD_i^*(e^s)- D_i^*D_i(e^s))e^{-s}, s>$$
$$= \sum\limits_{i = 1}^3(<D_i(e^{-2s}D_i^*(e^{2s})), s>) - <[V(\Psi), e^{s}]e^{-s}, s>.$$
Here we used the fact that $Ad_{e^{s}}$ is self-adjoint and $Ad_{e^s}(s) = s$. Moreover,
$$<[V(\Psi), e^{s}]e^{-s}, s> = <\gamma(s)([V(\Psi), s]), s> = <[V(\Psi), s], s> = 0.$$
$$<D_1(e^{-2s}D_1^*(e^{2s})), s> = <(\partial_1 + i\partial_2)(e^{-2s}D_1^*(e^{2s})) + [A_1 + iA_2, (e^{-2s}D_1^*(e^{2s}))], s> $$
$$= \partial_1(<\gamma(-2s)(D_1^*(2s)) , s>) + \partial_2 (<i\gamma(-2s)(D_1^*(2s)), s>) + <\gamma(-2s)D_1^*(2s), D_1^*s> $$
$$=2\partial_1(<D_1^*(s) , s>) + 2\partial_2 (<iD_1^*s, s>) + 2|v(-2s)D_1^*(s)|^2. $$
The last step used the fact that $v(-2s)$ and $\gamma(-2s)$ are self-adjoint and $\gamma(-2s)(s) = s$. Similarly,
$$<D_2(e^{-2s}D_2^*(e^{2s})), s> = 2|v(-2s)D_2^*(s)|^2. $$
$$<D_3(e^{-2s}D_3^*(e^{2s})), s> = <\partial_y (e^{-2s}D_3^*(e^{2s})) + [A_y - i\Phi_3, (e^{-2s}D_3^*(e^{2s}))], s> $$
$$=2\partial_y(<D_3^*(s) , s>)  + 2|v(-2s)D_3^*(s)|^2. $$

Finally,
$$2\partial_1(<D_1^*(s) , s>) + 2\partial_2 (<iD_1^*s, s>) + 2 \partial_y(<D_3^*(s), s>) $$
$$= 2 \partial_1<- \partial_1s + i\partial_2s, s> + 2\partial_2<-i\partial_1s - \partial_2s, s> + 2\partial_y<-\partial_ys, s> = - \Delta(|s|^2). $$
So
$$<V(\Psi_s) - V(\Psi), s> = - \Delta(|s|^2) + \sum\limits_{i = 1}^3 2|v(-2s)D_i^*(s)|^2.$$
\end{proof}

\paragraph{Adding Hermitian $SL(2, \IC)$ gauge transformations}~\\

We need to be very careful to add two Hermitian $SL(2, \IC)$ gauge transformations. If we have two different Hermitian $SL(2, \IC)$ gauge transformations $e^{s_1}$ and $e^{s_2}$. If we apply $e^{s_2}$ first and then $e^{s_1}$ on a configuration $\Psi$, what we get is $e^{s_1}e^{s_2}(\Psi)$, which is generally not equivalent with $e^{s_1 + s_2} (\Psi)$.\\

Here is the correct way to add them: Let $\sigma$ satisfy $e^{2 \sigma} = e^{s_2} e^{2s_1}e^{s_2}$. Note that $e^{s_2} e^{2s_1}e^{s_2}$ is Hermitian. So $\sigma \in isu(2)$ exists and is unique.\\

Moreover, $(e^{s_1}e^{s_2}e^{-\sigma})^*(e^{s_1}e^{s_2}e^{-\sigma}) = I$. So $u = (e^{s_1}e^{s_2}e^{-\sigma})$ is in $SU(2)$. And $e^{s_1}e^{s_2} = u e^{\sigma}$. So $e^{s_1}e^{s_2}(\Psi)$ is $SU(2)$ equivalent with $e^{\sigma} (\Psi)$.\\

Sometimes, we still use the sloppy notation $s_1 + s_2$ to represent $\sigma$ mentioned above if there is not a potential confusion. Note that if $|\nabla^k s_1|$ and $|\nabla^k s_2|$ are all bounded for $k \leq K$, where $\nabla^k$ is the $k$th derivative and $K$ is a non-negative integer, then $|\nabla^K \sigma|$ is also bounded accordingly. This is the key that guarantees the elliptic estimates later are not ruined by the non-commutative way of taking sums.\\

One should also be careful with the differences here: For example, $s_1 + s_2 \neq s_2 + s_1$. And the bound of $|\nabla^K \sigma|$ may also depend on the lower order derivatives of $s_1$ and $s_2$.

\section{The model solution}\label{Section: The model solution}

This section studies a very special case in which the extended Bogomolny equations can be reduced to a single scalar equation. We assume the solution $\Psi$ corresponds to a pair $(H, \varphi)$ such that
$$H = \begin{pmatrix}
    e^u & 0 \\ 0 & e^{-u}
\end{pmatrix}, ~~~ \varphi = \begin{pmatrix}
    0 & 0 \\ \varphi(z) & 0
\end{pmatrix}. $$

In another word, $H$ can be diagonalized and $\varphi$ can be made as a lower triangular matrix at the same time. This section studies solutions to the extended Bogomolny equations in this special case with generalized Nahm pole boundary condition at $y \rightarrow 0$ and real symmetry breaking condition as $y \rightarrow +\infty$.\\

Based on the special case 1 in subsection \ref{Subsection: The metric representation},  the extended Bogomolny equations are reduced to a scalar equation:

$$\Delta u + e^{-2u} |P(z)|^2 = 0. ~~~~~ (*) $$

This section finds a solution to $(*)$ that gives us a solution to the extended Bogomolny equations with Nahm pole boundary condition as $y \rightarrow 0$ and real symmetry breaking condition as $y \rightarrow +\infty$. This solution will serve as a model solution to construct more solutions later in this paper.

\subsection{Boundary/assymptotic conditions}\label{Subsection: Boundary/assymptotic conditions}

In order for a solution of $(*)$ to represent a solution to the extended Bogomolny equations that we are interested in, we need to translate the  boundary/asymptotic conditions that we mentioned into conditions on $u$.

\paragraph{The generalized Nahm pole boundary condition as $y \rightarrow 0$}~\\

We define the generalized Nahm pole boundary condition in an indirect way: We quote Section 6 of \cite{He2019TheCondition} for a model solution that has the generalized Nahm pole boundary condition. The following theorem can be found as proposition 6.1 in \cite{He2019TheCondition}.

\begin{theorem}
For each non-zero polynomial $P(z)$, there is a unique solution to $(*)$, denoted as $u = u_0$, that has the following properties:
\begin{itemize}
    \item When $y \rightarrow 0$, it satisfies the generalized Nahm pole boundary condition.
    \item When $R \rightarrow +\infty$, $u_0 = N \ln R + \ln y + O(1)$, uniformly with respect to $\dfrac{y}{R}$. 
\end{itemize}

\end{theorem}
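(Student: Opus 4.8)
The plan is to produce $u_0$ by the classical method of sub- and super-solutions on an exhaustion of $X=\IR^2\times\IR^+$ by bounded domains, and to prove uniqueness by the maximum principle. The crucial structural feature is that the nonlinearity $u\mapsto e^{-2u}|P(z)|^2$ is strictly decreasing in $u$, so that $(*)$ is of monotone type: this is exactly what makes both monotone iteration and the comparison arguments go through. Throughout, $N=\deg P$ and we write $|P(z)|^2=\prod_j|z-a_j|^{2k_j}$ for the zeros $a_j$ with multiplicities $k_j$, so $\sum_j k_j=N$; the $a_j$ (with degrees $k_j$) are the knotted points built into the generalized Nahm pole condition.

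First I would write down the barriers. Away from the $a_j$ the function $\log|P(z)|$ is harmonic, and a direct computation gives, for any constant $C\ge 0$,
\[
\Delta\bigl(\log y+\log|P(z)|+C\bigr)+e^{-2(\log y+\log|P(z)|+C)}|P(z)|^2=-\tfrac{1}{y^{2}}\bigl(1-e^{-2C}\bigr)\le 0,
\]
so $\overline u:=\log y+\log|P|+C$ is a super-solution off the zeros; likewise $\underline u:=\log y+\log|P|-C$ is a global weak sub-solution, because the distributional Laplacian of $\log|P|$ carries the positive point masses $2\pi k_j\,\delta_{a_j}$, which only help the sub-solution inequality. Near each $a_j$ the super-solution must be repaired — there $\log|P|$ acquires exactly those positive point masses, which break the super-solution inequality — and this is done by splicing in a local comparison function carrying the correct degree-$k_j$ knotted behaviour, modelled on the known local solutions of the degenerate Liouville equation $\Delta v+|z-a_j|^{2k_j}e^{-2v}=0$ with Nahm pole on $\{y=0\}$; a global super-solution is then obtained by taking the minimum of these local pieces with $\overline u$, which preserves the super-solution property for a monotone equation. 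One arranges $\underline u\le\overline u$ on all of $X$, and both barriers have the form $\log y+O(1)$ as $y\to 0$ (with the prescribed knotted points at the $a_j$) and the form $N\log R+\log y+O(1)$ as $R\to\infty$, uniformly in $y/R$.

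Next I would solve the Dirichlet problem for $(*)$ on truncated half-balls $\Omega_\rho=\{x_1^2+x_2^2+y^2<\rho^2,\ y>1/\rho\}$, with boundary data pinned between the two barriers; monotone iteration between $\underline u$ and $\overline u$ produces solutions $u_\rho$ with $\underline u\le u_\rho\le\overline u$, interior Schauder estimates on compact subsets of $X$ are uniform in $\rho$ because the $u_\rho$ are trapped between the fixed barriers, and a diagonal argument yields a solution $u_0$ of $(*)$ on $X$ with $\underline u\le u_0\le\overline u$. The sandwich then forces $u_0$ to satisfy the generalized Nahm pole boundary condition as $y\to 0$ and the expansion $u_0=N\log R+\log y+O(1)$ as $R\to\infty$. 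For uniqueness, if $u_1,u_2$ both have these properties, set $w=u_1-u_2$; subtracting the equations and applying the mean value theorem gives $\Delta w=Vw$ with $V=2|P|^2e^{-2\xi}\ge 0$, so $|w|$ is a subsolution of $\Delta-V$. The rigidity of the Nahm pole expansion forces $w\to 0$ as $y\to 0$, and the prescribed asymptotics force $w$ bounded as $R\to\infty$; since there $V$ is bounded below by a positive constant on bounded-$y$ regions, $w$ in fact decays as $R\to\infty$, and a Phragmén--Lindelöf argument on the exhaustion (using that $y$ is harmonic to control the $y\to\infty$ direction, where $(*)$ becomes asymptotically harmonic) gives $w\le 0$; by symmetry $w\equiv 0$.

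I expect the genuine difficulty to be the construction of the upper barrier near the knotted points $a_j$: there $|P|^2$ degenerates, $\log|P|$ is only subharmonic, and one must glue the global profile $\log y+\log|P|+O(1)$ to the three-dimensional local model of the degenerate Liouville/Nahm-pole equation so that the result is a genuine super-solution across the splicing annulus while still displaying the precise generalized Nahm pole structure of degree $k_j$ at each $a_j$. Once adequate barriers are in hand, the remaining ingredients — monotone iteration, uniform interior Schauder estimates, the diagonal limit, and the maximum-principle/Phragmén--Lindelöf step — are routine, which is presumably why the statement can simply be quoted from \cite{He2019TheCondition}.
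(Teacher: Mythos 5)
First, a point of reference: the paper offers no proof of this statement. It is quoted verbatim from Proposition 6.1 of \cite{He2019TheCondition} and explicitly ``taken for granted'', so there is no in-paper argument to compare against. Your overall architecture (sub/super-solutions, monotone iteration on an exhaustion, maximum principle for uniqueness) is the standard one and is exactly in the spirit of the Perron construction the paper runs for its own model solution $u_3$ in Section 2.2; your uniqueness step is also sound once both solutions are known to satisfy the stated asymptotics, since then $w=u_1-u_2$ is bounded, subharmonic where positive, and vanishes at $y=0$.

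There is, however, a genuine error in your barriers, and it kills precisely the second bullet of the theorem. You assert that $\log y+\log|P|\pm C$ ``have the form $N\log R+\log y+O(1)$ as $R\to\infty$, uniformly in $y/R$''. They do not: writing $y=R\sin\psi$ and $|z|=R\cos\psi$, one has $\log y+\log|P(z)|=\log y+N\log|z|+O(1)=N\log R+\log y+N\log\cos\psi+O(1)$, and $N\log\cos\psi\to-\infty$ as $y/R\to1$. So along and near the positive $y$-axis your barriers are only $\log y+O(1)$, whereas the theorem asserts $u_0=N\log R+\log y+O(1)\approx(N+1)\log y$ there. (This is easily confirmed for $P=z^k$: the scaling symmetry $u\mapsto u(\lambda\,\cdot)-(k+1)\log\lambda$ plus uniqueness forces $u_0=(k+1)\log R+f(\psi)$, which is $(k+1)\log y$ on the axis.) Hence your upper barrier does not dominate the true solution, the Dirichlet data on the spherical part of $\partial\Omega_\rho$ cannot be pinned between the barriers consistently with $u_0$, and the sandwich, far from ``forcing'' the claimed expansion, would produce a function with the wrong growth near the axis. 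The missing idea is that $R=\infty$ must itself be treated as a knot of total degree $N$: the barrier there has to be modelled on the exact homogeneous solution for $z^N$, which is $N\log R+\log y+O(1)$ uniformly in $y/R$, spliced to $\log y+\log|P|$ only in the regime $y/R\to0$. A second, smaller gap: the super-solution inequality for $\log y+\log|P|+C$ fails along the entire half-lines $\{z=a_j\}\times\IR^+$ (that is where the positive masses of $\Delta\log|P|$ sit, and where the function equals $-\infty$ and so cannot serve as an upper bound at all), so the repair by local models must be carried out along those whole lines, not only near the knotted points on $\{y=0\}$.
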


The following feature of $u_0$ will be useful later: On the entire $X$ (either $y \rightarrow 0$ or $y \rightarrow +\infty$), we have

$$e^{-u_0}|P(z)| = O(\dfrac{1}{y}). $$

Note that the definition of $u_0$ here differs from the one in some literature, say \cite{He2019TheCondition}, up to a sign. But it agrees with some other literature, say \cite{Dimakis2022TheField}.\\

In this paper, taking the above theorem for granted, we can simply define a generilized Nahm pole boundary conditions for a solution $u$ as follows:\\

Let $\Psi_0$ be the configuration that is represented by $u_0$. Then to say that a general configuration $\Psi$ satisfies the generalized Nahm pole boundary condition means: There is a positive number $\epsilon$ such that, possibly after an $SU(2)$ gauge transformation,

$$\Psi  = \Psi_0 + O(y^{-1 + \epsilon}), ~~ \text{as} ~~ y \rightarrow 0.$$

If we have a solution of $(*)$, then the following condition on $u$ implies that it represents a solution with the generalized Nahm pole boundary condition:

$$|\nabla (u - u_0)| = O(y^{-1 + \epsilon}), ~~~ u = u_0 + O(y^{\epsilon}), ~~~ \text{as} ~~ y \rightarrow 0. $$

\paragraph{Real symmetry breaking as $y \rightarrow +\infty$}

\begin{definition}
We say that a configuration $(A_1, A_2, A_y, \Phi_1, \Phi_2, \Phi_3)$ satisfies the real symmetry breaking condition if, possibly after an $SU(2)$ gauge transformation, for some $\epsilon > 0$,  when $y \rightarrow +\infty$
$$A_1, A_2, A_y, \Phi_1, \Phi_2 = O(y^{-\epsilon}), ~~ \Phi_3 = \dfrac{1}{2}\begin{pmatrix}
    -i & 0 \\ 0 & i 
\end{pmatrix} + O(y^{- \epsilon}). $$
\end{definition}

If we have a solution of $(*)$, then the following condition on $u$ implies that it represents a solution with the real symmetry breaking: As $y \rightarrow +\infty$,

$$\partial_1 u, \partial_2 u, e^{-u}|P(z)| = O(y^{-\epsilon}), ~~ \partial_y u = 1 + O(y^{-\epsilon}).$$

\subsection{The construction of a solution}\label{Subsection: The constructoin of a solution}

We use a version of the Perrons' method. This method is somehow standard in many PDE books, say \cite{Gilbarg2001EllipticOrder}.

\paragraph{Super/sub-solutions}~\\

A function $u$ such that

$$- \Delta u - e^{-2u}|P(z)|^2 \leq 0 $$

is called a sub-solution. A function $u$ such that

$$- \Delta u - e^{-2u}|P(z)|^2 \geq 0 $$

is called a super-solution.\\

We always assume $u$ is continuous, but allow the possibility that $u$ is not differentiable.  If this is the case, then a sub-solution/super-solution is defined in the weak sense: For example, to say

$$- \Delta u - e^{-2u}|P(z)|^2 \geq 0 $$

means for any non-negative smooth function $v$ on $X$ supported on a compact region, we have

$$\int_X (- u\Delta v - e^{-2u}|P(z)|^2v) \geq 0.  $$

If $u$ is both a sup-solution and a sub-solution, then it is a weak solution. The following argument is standard, showing that a weak solution is an actual smooth solution:\\

Suppose $u$ is a weak solution. Choose any ball whose closure is in $X$. Since $\Delta u + e^{-2u}|P(z)|^2 = 0$ in the ball and $e^{-2u}|P(z)|^2 \in L^2(B)$, using the standard elliptic regularity argument, we see that $u \in W^{2, 2}(B')$, where $B'$ is a slightly smaller ball in $B$, $W^{2, 2}$ is the Sobolev space. This further implies that $e^{-2u}|P(z)|^2 \in W^{2, 2}(B')$ using the Chain rule for derivatives, the fact that $u$ is continuous and bounded, and the Sobolev embedding/multiplication inequalities in a 3-dimensional space. This further implies that $u \in W^{2, 4}$ in a even smaller ball. We can do boot-strapping and conclude that $u \in W^{2, n}$ for any positive $n$ in a smaller ball. So it is smooth in that small ball. Since the ball is arbitrary chose, $u$ is smooth everywhere in $X$. In particular, it is an actual solution.\\

We construct a preferred super-solution $u_1$ and a preferred sub-solution $u_2$ such that: They both satisfy the generalized Nahm pole condition plus real symmetry breaking condition. And $u_1 \geq u_2$ pointwise.\\

Here is the definition of $u_1$:

$$u_1 = u_0 + y. $$

Here is the definition of $u_2$:

$$u_2 = u_0 + f(y),$$
where $f(y) = \begin{cases}
0,& y \leq C;\\
y - C\ln y - C - C\ln C, & y \geq C,
\end{cases}$\\

where $C$ is a large enough constant.\\

Clearly they all satisfy the desired boundary/assymptotic conditions. Moreover,

$$\Delta u_1 + e^{-2u_1} |P(z)|^2 = \Delta u_0 + e^{-2y}e^{-2u_0}|P(z)|^2 \leq \Delta u_0 + e^{-2u_0}|P(z)|^2 = 0.$$

So $u_1$ is indeed a super-solution.

$$\Delta u_2 + e^{-2u_2}|P(z)|^2 = \Delta u_0 + f''(y) + e^{-2f(y)} e^{-2u_0}|P(z)|^2 = (e^{-2f(y)} - 1)e^{-2u_0}|P(z)|^2 + f''(y). $$

Note that although $f''(y)$ doesn't exist, we do have

$$f''(y) \geq \begin{cases} 0, & y \leq C\\
\dfrac{C}{y^2}, & y > C
\end{cases}
$$ in the weak sense.\\

So replace $f''(y)$ by the above function on the right, we get $0$ when $y \leq C$ and we get

$$(e^{-2f(y)} - 1)e^{-2u_0}|P(z)|^2 + f''(y) \geq (e^{-2f(y)} - 1)e^{-2u_0}|P(z)|^2 + \dfrac{C}{y^2} \geq  - e^{-2u_0}|P(z)|^2 + \dfrac{C}{y^2} \geq 0, $$
when $y > C$.\\

Thus $u_2$ is indeed a sub-solution.\\

\paragraph{Perron's argument}~\\

In general, if we have a collection of functions $u \in \mathcal{U}$ that are bounded above point-wise, then we define $\sup \mathcal{U}$ to be another function whose value at each point $p$ is:
$$\sup \{u(p) ~ | ~ u \in \mathcal{U}\}. $$

For each point $p \in Z$, let $$u_3(p) = \sup\{u(p) ~ | ~ u ~ \text{is a sub-solution with} ~ u \leq u_2 ~ \text{on the entire} ~X\}. $$

We still use  $\mathcal{U}$ to denote the set $\{u(p) ~ | ~ u ~ \text{is a sub-solution with} ~ u \leq u_2 ~ \text{on the entire} ~X\}$.\\

Then clearly $u_3$ is a function that satisfies:

$$u_1 \leq u_3 \leq u_2. $$

We prove that $u_3$ is an actual solution. We need two lemmas first:

\begin{lemma}
If we have two sub-solutions $v_1, v_2$, then $\max\{v_1, v_2\}$ is still a sub-solution.
\end{lemma}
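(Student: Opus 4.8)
The statement is local: being a weak sub-solution of $(*)$ can be checked on each ball $B$ with $\overline{B} \subset X$, so the plan is to fix such a $B$. Away from the contact set $\{v_1 = v_2\}$ there is nothing to do, since on each of the open sets $\{v_1 > v_2\}$ and $\{v_2 > v_1\}$ the function $\max\{v_1,v_2\}$ coincides locally with one of $v_1$, $v_2$. The only genuine issue is the contact set, which may be large and irregular because $v_1, v_2$ are merely continuous; what saves the argument is that the nonlinear term $e^{-2u}|P(z)|^2$ is continuous in $u$ and depends only on the \emph{value} of $u$, which is unambiguous on $\{v_1 = v_2\}$. (Monotonicity of $t \mapsto e^{-2t}$ plays no role here, unlike in the comparison principle used elsewhere in the Perron argument.)

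To make this precise I would argue by mollification. Since each $v_i$ is a continuous weak sub-solution, $\Delta v_i$ is a signed Radon measure with $\Delta v_i \geq -e^{-2v_i}|P(z)|^2$, the right-hand side being a locally bounded continuous function. Let $v_i^{\epsilon} = v_i * \rho_{\epsilon}$ on a slightly smaller ball; these are smooth, converge to $v_i$ uniformly on compact subsets of $B$, and satisfy $\Delta v_i^{\epsilon} = (\Delta v_i) * \rho_{\epsilon} \geq -(e^{-2v_i}|P(z)|^2) * \rho_{\epsilon}$. Put $v^{\epsilon} = \max\{v_1^{\epsilon}, v_2^{\epsilon}\}$, which is locally Lipschitz; the standard fact that the distributional Laplacian of a maximum of smooth functions dominates the piecewise one, namely
\[
\Delta v^{\epsilon} \;\geq\; \chi_{\{v_1^{\epsilon} \geq v_2^{\epsilon}\}}\,\Delta v_1^{\epsilon} \;+\; \chi_{\{v_2^{\epsilon} > v_1^{\epsilon}\}}\,\Delta v_2^{\epsilon}
\]
(the omitted term is supported on the ``crease'' $\{v_1^{\epsilon} = v_2^{\epsilon}\}$ and is non-negative), combined with the previous inequality and the uniform continuity of $t \mapsto e^{-2t}|P(z)|^2$ together with $v_i^{\epsilon} \to v_i$ uniformly, yields $\Delta v^{\epsilon} + e^{-2v^{\epsilon}}|P(z)|^2 \geq -\eta(\epsilon)$ on any compact subset of $B$, with $\eta(\epsilon) \to 0$.

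Finally I would pass to the limit: for an arbitrary non-negative $\phi \in C_c^{\infty}(B)$,
\[
\int_B \big( v^{\epsilon}\,\Delta \phi + e^{-2v^{\epsilon}}|P(z)|^2\,\phi \big) \;=\; \int_B \big(\Delta v^{\epsilon} + e^{-2v^{\epsilon}}|P(z)|^2\big)\,\phi \;\geq\; -\,\eta(\epsilon)\!\int_B \phi ,
\]
and letting $\epsilon \to 0$, using $v^{\epsilon} \to \max\{v_1,v_2\}$ uniformly, the left-hand side converges to $\int_B(\max\{v_1,v_2\}\,\Delta\phi + e^{-2\max\{v_1,v_2\}}|P(z)|^2\,\phi)$, which is therefore $\geq 0$; hence $\max\{v_1,v_2\}$ is a weak sub-solution on $B$, and $B$ being arbitrary, on all of $X$. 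The main (indeed only) obstacle is the contact-set issue flagged in the first paragraph, and the mollification circumvents it precisely by never requiring any structure on $\{v_1 = v_2\}$: it uses only that creasing a function can only increase its Laplacian and that the zeroth-order nonlinearity is continuous and pointwise defined. An alternative route avoiding mollification is to apply Kato's inequality $\Delta|w| \geq \operatorname{sgn}(w)\,\Delta w$ (distributionally) to $w = v_1 - v_2$ via $\max\{v_1,v_2\} = \tfrac12(v_1+v_2) + \tfrac12|v_1-v_2|$ and then check the desired inequality separately, as measures, on $\{v_1 > v_2\}$, $\{v_1 < v_2\}$ and $\{v_1 = v_2\}$.
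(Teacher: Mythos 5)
Your proof is correct, but it takes a genuinely different --- and more careful --- route than the paper. The paper disposes of the lemma in two lines by asserting that $\Delta(\max\{v_1,v_2\}) \geq \Delta v_1$ and $\Delta(\max\{v_1,v_2\}) \geq \Delta v_2$ hold ``in the weak sense,'' summarized as $\Delta(\max\{v_1,v_2\}) \geq \max\{\Delta v_1, \Delta v_2\}$, and then runs essentially the same bookkeeping with the zeroth-order term that you do at the end. That intermediate inequality is false as a statement about distributions: take $v_2 \equiv 0$ and $v_1$ smooth with $v_1 < 0$ and $\Delta v_1 > 0$ near some point (both are sub-solutions of $(*)$); there $\max\{v_1,v_2\} = 0$ on a neighborhood, so $\Delta(\max\{v_1,v_2\}) = 0 < \Delta v_1$. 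What is true, and what the final chain of inequalities actually needs, is the localized statement that $\Delta(\max\{v_1,v_2\})$ dominates the Laplacian of whichever function attains the max, up to a non-negative singular term on the contact set, i.e.
$$\Delta \max\{v_1,v_2\} \;\geq\; \chi_{\{v_1 \geq v_2\}}\,\Delta v_1 \;+\; \chi_{\{v_2 > v_1\}}\,\Delta v_2,$$
which is exactly the inequality you isolate and then justify by mollification (or, as you note, by Kato's inequality applied to $|v_1 - v_2|$). So your argument supplies precisely the ingredient that makes the paper's shortcut legitimate: the paper buys brevity at the cost of a literally incorrect intermediate step, while you buy a rigorous treatment of the contact set, which is indeed the only place where anything can go wrong. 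One small point worth making explicit: on the crease $\{v_1^{\epsilon} = v_2^{\epsilon}\}$ your piecewise lower bound uses $\Delta v_1^{\epsilon}$, whereas the Kato computation only yields the average $\tfrac12(\Delta v_1^{\epsilon} + \Delta v_2^{\epsilon})$ there; this is harmless because for smooth functions $\Delta v_1^{\epsilon} = \Delta v_2^{\epsilon}$ a.e.\ on that set, and in any case either value is bounded below by the corresponding mollified right-hand side up to your $\eta(\epsilon)$, but the justification should be recorded.
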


\begin{proof}
Clear $\max\{v_1, v_2\}$ is still continuous. Moreover, 
$$\Delta (\max\{v_1, v_2\}) \geq \Delta v_1, ~~~ \Delta (\max\{v_1, v_2\}) \geq \Delta v_2$$
in the weak sense. We write it as
$$\Delta (\max\{v_1, v_2\}) \geq \max\{\Delta v_1, \Delta v_2\}.$$
So
$$\Delta (\max\{v_1, v_2\}) + e^{-2\max\{v_1, v_2\}}|P(z)|^2 \geq \max\{\Delta v_1, \Delta v_2\} + e^{-2\max\{v_1, v_2\}}|P(z)|^2$$ $$\geq \min\{\Delta v_1 + e^{-2v_1}|P(z)|^2, \Delta v_2 + e^{-2v_2}|P(z)|^2\} \geq 0. $$

So $\{v_1, v_2\}$ is also a sub-solution.

\end{proof}

\begin{lemma}
    Suppose $B$ is a ball whose closure is a compact subset of $X$. Suppose $v\in \mathcal{U}$. Then there is a unique solution $v_0$ in $B$ that is continuous up to the boundary, such that $v_0 \geq v$ in $B$ and $v_0 = v$ on $\partial B$. We call it a ``lifting to an solution" of $v$ in $B$. Moreover $v \leq v_2$.
\end{lemma}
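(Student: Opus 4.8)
The plan is to obtain $v_{0}$ as the solution of the Dirichlet problem for $(*)$ in $B$ with boundary data $v|_{\partial B}$, produced by the classical sub- and super-solution (monotone iteration) method (see e.g.\ \cite{Gilbarg2001EllipticOrder}), and then to read off the stated inequalities and the uniqueness from the comparison principle.

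Write $g(z,t)=e^{-2t}|P(z)|^{2}$, so that $(*)$ is $\Delta u+g(z,u)=0$; the decisive structural feature is that $t\mapsto g(z,t)$ is smooth and strictly decreasing. First I would record the range and Lipschitz data: $v$ is continuous, hence bounded on the compact set $\overline{B}$, and since $v\in\mathcal{U}$ one has $v\le u_{1}$ on $X$, where $u_{1}=u_{0}+y$ is the super-solution (smooth and bounded on $\overline{B}$). Consequently $g$ and $\partial_{t}g$ are bounded on $\overline{B}\times[\inf_{\overline{B}}v,\,\sup_{\overline{B}}u_{1}]$, and I fix $M>0$ with $|\partial_{t}g|\le M$ there, so that $t\mapsto Mt+g(z,t)$ is nondecreasing on that range. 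Then I would run the monotone iteration: set $\overline{u}_{0}:=u_{1}|_{\overline{B}}$ and let $\overline{u}_{n+1}$ solve the linear problem $(\Delta-M)\overline{u}_{n+1}=-M\overline{u}_{n}-g(z,\overline{u}_{n})$ in $B$ with $\overline{u}_{n+1}=v$ on $\partial B$. Using that $v$ is a weak sub-solution, $u_{1}$ a super-solution, $v\le u_{1}$ on $\overline{B}$, and the maximum principle for $\Delta-M$ (with $M>0$), one checks inductively that $v\le\overline{u}_{n+1}\le\overline{u}_{n}\le u_{1}$ on $\overline{B}$. The decreasing limit $v_{0}$ then solves $\Delta v_{0}+g(z,v_{0})=0$ weakly in $B$, hence is smooth there by the elliptic bootstrap already explained above, and satisfies $v\le v_{0}\le u_{1}$ in $B$. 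This sandwich is the ``moreover'' part: extending $v_{0}$ by $v$ off $B$ gives a sub-solution still dominated by $u_{1}$, so the lift remains in the Perron family.

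For the boundary values, the point is that $B$ is a ball, so it satisfies the exterior ball condition at every $\xi\in\partial B$; hence already the single linear iterate $\overline{u}_{1}$ — whose right-hand side $-Mu_{1}-g(z,u_{1})$ is continuous and bounded on $\overline{B}$ — lies in $C^{0}(\overline{B})$ with $\overline{u}_{1}=v$ on $\partial B$. From $v\le v_{0}\le\overline{u}_{1}$ one gets $\liminf_{x\to\xi}v_{0}(x)\ge v(\xi)$ and $\limsup_{x\to\xi}v_{0}(x)\le v(\xi)$ for every $\xi\in\partial B$, so $v_{0}$ extends continuously to $\overline{B}$ with $v_{0}=v$ on $\partial B$. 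The inequality $v_{0}\ge v$ in $B$ is built into the iteration; alternatively it follows from comparison: on the open set $\{v>v_{0}\}\subset B$ the monotonicity of $g$ gives $\Delta(v-v_{0})\ge g(z,v_{0})-g(z,v)\ge 0$ weakly, so $v-v_{0}$ is subharmonic there and vanishes on the (relative) boundary of that set, forcing it to be empty — the weak manipulation being exactly the one used for $\max\{v_{1},v_{2}\}$ in the previous lemma.

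Uniqueness is the same comparison argument run twice: if $v_{0}$ and $v_{0}'$ both solve $(*)$ in $B$, are continuous on $\overline{B}$, and agree on $\partial B$, then on $\{v_{0}>v_{0}'\}$ the strict monotonicity of $g$ makes $v_{0}-v_{0}'$ weakly subharmonic with vanishing boundary values, so that set is empty; swapping the roles of $v_{0},v_{0}'$ gives $v_{0}=v_{0}'$. I do not anticipate a genuine obstacle here. The two points that demand a little care are (i) carrying out the comparison arguments for the merely continuous, weakly defined sub-solution $v$, which is handled exactly as in the preceding lemma by testing against nonnegative functions, and (ii) keeping the whole iteration inside the range $[\inf_{\overline{B}}v,\,\sup_{\overline{B}}u_{1}]$ on which $g$ is Lipschitz, which is precisely what the a priori trapping $v\le\overline{u}_{n}\le u_{1}$ secures.
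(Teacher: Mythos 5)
Your proposal is correct and follows essentially the same strategy as the paper: a monotone iteration for the shifted linear operator $\Delta - M$ (the paper's $\Delta - C$) with boundary data $v$, trapped between the sub-solution $v$ and a super-solution, followed by the standard comparison/maximum-principle argument for uniqueness and for the ``moreover'' inequality. The only differences are cosmetic --- you iterate downward from the super-solution while the paper iterates upward from $v$, and you justify boundary continuity by squeezing $v \leq v_0 \leq \overline{u}_1$ and uniqueness via weak subharmonicity on $\{v_0 > v_0'\}$ (both of which are, if anything, slightly more careful than the paper's corresponding steps).
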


\begin{proof}
We show the uniqueness first. Suppose two actual solutions $v_0, v_0'$ in $B$ both equal $v$ on $\partial B$. Then 

$$\Delta(v_0 - v_0') + (e^{-2v_0} - e^{-2v_0'})|P(z)|^2 = 0. $$

Since $v_0 - v_0'$ is continuous up to boundary in $B$. If they are not the same in $B$, without loss of generality, we may assume that $p$ is a positive maximal point of $v_0 - v_0'$ inside of $B$ with the property

$$\Delta (v_0 - v_0') (p) <0.$$

Since we have $$ (e^{-2v_0} - e^{-2v_0'}) |P(z)|^2 \leq 0$$
at $p$, it contradicts with the fact that

$$\Delta(v_0 - v_0') + (e^{-2v_0} - e^{-2v_0'})|P(z)|^2 = 0. $$

To prove the existence, choose a large enough positive constant $C$. Using a standard Dirichlet argument, we may construct a function $v_1$ such that

$$\Delta v_1 - Cv_1 = - e^{-2v}|P(z)|^2 - Cv ~~ \text{in $B$ and} ~~ v_1 = v ~~ \text{on $\partial B$}.  $$
 Using the fact that $\Delta v_1 - Cv_1 \leq \Delta v - Cv$ and maximal principle, we know $v_1 \geq v$. Then successively construct a sequence $\{v_n\}$ that have the same boundary values on $\partial B$ and
$$\Delta v_{n+1} -C v_{n+1} = - e^{-2v_n}|P(z)|^2 - Cv_n$$
in $B$. Since $B$ and $P(z)$ are fixed and since $v$ is bounded below by $u_1$ from the construction, we may assume $C$ is large enough such that, if $v_n$ is also bounded from the below by $u_1$ and if $v_{n+1} \geq v_{n} $, then $$e^{-2v_{n+1}}|P(z)|^2 + Cv_{n+1} \geq e^{-2v_{n}}|P(z)|^2 + Cv_{n}.  $$
Thus 
$$v_{n+1} \geq v_n $$
implies
$$\Delta v_{n+2} - C v_{n+2} \leq \Delta v_{n+1} - Cv_{n+1},$$
which further implies
$$v_{n+2} \geq v_{n+1}. $$

So inductively we get $v_n$ is an increasing sequence and $\Delta v_n$ is a decreasing sequence.\\

On the other hand, we have $v \leq u_2$. So

$$e^{-2u_2}|P(z)|^2 + Cu_2 \geq e^{-2v}|P(z)|^2 + Cv. $$
$$\Delta u_2 - Cu_2 \leq - e^{-2u_2} - Cu_2 \leq -e^{-2v}|P(z)|^2 - Cv = \Delta v_1 - Cv_1. $$

And we know that $v_1 \leq u_2$ on the boundary. A maximal principle implies that $v_1 \leq u_2$ in $B$ as well. And inductively we get $v_n \leq u_2$ for all $n$. In particular, the increasing sequence $v_n$ has an upper bound $u_2$. So it converges uniformly to a continuous function $v_0$.\\

Clearly $v \leq v_0 \leq u_2$.\\

Finally, suppose $G$ is the Green's function of $\Delta$ for $B$ centered at any point $p\in B$ and $P$ is the Poisson's kernel on $\partial B$ evaluated at $p$. Since all $v_n$ are bounded uniformly in $\bar{B}$,  using dominated convergent theorem,
$$v_0(p) = \lim\limits_{n \rightarrow +\infty} v_{n+1}(p) = (\lim\limits_{n \rightarrow +\infty} \int_B G (Cv_{n+1} - e^{-2v_n}|P(z)|^2 - Cv_n)) + \int_{\partial B} Pv)$$ $$ = -\int_B G e^{-2v_0}|P(z)|^2 + \int_{\partial B} Pv_0.  $$

This implies that
$$\Delta v_0 + e^{-2v_0}|P(z)|^2 = 0 $$
in $B$. So $v_0$ is what we want.

\end{proof}

Note that in the above proof, suppose $v \in \mathcal{U}$. Then its lifting to a solution in $B$ (while keeping the outside part of $B$ unchanged) is still in $\mathcal{U}$.\\

Given the above two lemmas, we prove that $u_3$ is an actual sotluion:

\begin{proof}
    Consider a ball $B$ whose closure is a compact subset of $X$. Since elements in $\mathcal{U}$ are continuous and bounded above uniformly by $u_2$ in the closure of $B$, we may find an increasing sequence $v_n$ in $\mathcal{U}$ that converges to $u_3$ on the closure of $B$. Note that this implies that $u_3$ is also continuous in $B$.\\

    For each element in $\mathcal{U}$, replacing it with its ``lifting to a solution" in $B$ makes it larger without violating being in $\mathcal{U}$. Without affecting the argument, we may assume each $v_n$ that we chose equal to its ``lifting to a solution" in $B$. In particular, each $v_n$ is an actual solution in $B$.\\

    Finally, suppose $G$ is the Green's function of $\Delta$ for $B$ centered at any point $p\in B$ and $P$ is the Poisson's kernel on $\partial B$ evaluated at $p$. Since all $v_n$ are bounded uniformly in $\bar{B}$ and convergent uniformly to $u_3$ in $\bar{B}$,  using dominated convergent theorem,
$$u_3(p) = \lim\limits_{n \rightarrow +\infty} v_n(p) = \lim\limits_{n \rightarrow +\infty} (\int_B G (- e^{-2v_n}|P(z)|^2) + \int_{\partial B} Pv_n)$$ $$ = -\int_B G e^{-2u_3}|P(z)|^2 + \int_{\partial B} Pu_3.  $$

This implies that
$$\Delta v_3 + e^{-2v_3}|P(z)|^2 = 0 $$
in $B$. Since $B$ is arbitrary, $u_3$ is a solution on the entire $X$.
\end{proof}

\paragraph{The real symmetry breaking condition}~\\

Let $u_1, u_2, u_3$ have the same meaning as before. Remember that $u_1 \leq u_3 \leq u_2$. We verify that $u_3$ satisfies the real symmetry breaking condition as $y \rightarrow +\infty$. We always assume $\epsilon$ is a small enough constant, and $y \rightarrow +\infty$ (which means, all the inequalities only work when $y$ is large enough).\\

It suffices to verify that:

$$\partial_1 u_3, \partial_2 u_3, e^{-u_3}|P(z)| = O(y^{-\epsilon}),  ~~\partial_y u_3 = 1 + O(y^{-\epsilon}).$$

Since $u_3 \geq u_1 = u_0 + y$, 

$$e^{-u_3}|P(z)| \leq e^{-y}e^{-u_0}|P(z)|  = O(e^{-y}).$$

Let $v = u_3 - y - N\ln R$. It only remains to verify that

$$|\nabla v| = O(y^{-\epsilon}).$$

The equation for $v$ is:

$$ \Delta v + NR^{-2} + e^{-2u_3}|P(z)|^2 = 0.$$

Then

$$e^{-2u_3}|P(z)|^2 = O(e^{-2y}). $$

So 
$$\Delta v = O(y^{-2}). $$

Moreover, we have

$$|v| \leq \max\{|u_1 - y - N\ln R, u_2 - y - N\ln R|\} = O(\ln y). $$

We need a lemma:

\begin{lemma}\label{lemma 1}
    Choose a point $(z, y)$ first. Suppose $r \leq \dfrac{y}{4}$. Let $B_r$ be the ball of radius $r$ centered at $(z, y)$. Then
$$\int_{B_r} |\nabla v|^2 = O(r |\ln y|^2).$$
\end{lemma}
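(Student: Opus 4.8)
The plan is a standard Caccioppoli (reverse Poincar\'e) energy estimate for the equation satisfied by $v$. Recall from the preceding discussion that $u_3$ is an actual (smooth) solution, so $v = u_3 - y - N\ln R$ is smooth; that $\Delta v = -NR^{-2} - e^{-2u_3}|P(z)|^2$ with $|\Delta v| = O(y^{-2})$; and that $|v| = O(\ln y)$ (all of this for $y$ large, which is the regime of interest here). The hypothesis $r \le y/4$ is exactly what makes the concentric ball $B_{2r}$ of radius $2r$ about $(z,y)$ sit inside $X$ and inside the slab $\{\,y/2 \le y' \le 3y/2\,\}$, so that on $B_{2r}$ we have, uniformly, $|v| = O(\ln y)$, $|\Delta v| = O(y^{-2})$, and $|B_{2r}| = O(r^3)$.

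First I would fix a cutoff $\zeta \in C_c^\infty(B_{2r})$ with $\zeta \equiv 1$ on $B_r$, $0 \le \zeta \le 1$, and $|\nabla\zeta| \le C/r$. Testing the equation with $\zeta^2 v$ and integrating by parts over $B_{2r}$ gives
$$\int \zeta^2 |\nabla v|^2 = -\int \zeta^2 v\,\Delta v \;-\; 2\int v\,\zeta\,\nabla\zeta\cdot\nabla v.$$
The first term is bounded by $\sup|v|\cdot\sup|\Delta v|\cdot|B_{2r}| = O(\ln y)\cdot O(y^{-2})\cdot O(r^3) = O(r^3 y^{-2}\ln y)$, and since $r \le y/4$ this is $O(r\ln y)$. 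For the second term, Cauchy--Schwarz gives
$$\Bigl|2\int v\,\zeta\,\nabla\zeta\cdot\nabla v\Bigr| \le \frac{2C}{r}\,\sup|v|\,\Bigl(\int\zeta^2|\nabla v|^2\Bigr)^{1/2}|B_{2r}|^{1/2} = O(\ln y)\cdot O(r^{1/2})\cdot\Bigl(\int\zeta^2|\nabla v|^2\Bigr)^{1/2}.$$
Writing $I := \int \zeta^2|\nabla v|^2$, these two bounds yield $I \le O(r\ln y) + O(r^{1/2}\ln y)\,I^{1/2}$, and Young's inequality $O(r^{1/2}\ln y)\,I^{1/2} \le \tfrac12 I + O(r(\ln y)^2)$ gives $I \le O(r(\ln y)^2)$. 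Since $\int_{B_r}|\nabla v|^2 \le I$, this is the claimed bound $O(r|\ln y|^2)$.

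I do not expect a genuine obstacle here: the only points requiring care are (i) checking that the quoted bounds $|v| = O(\ln y)$ and $|\Delta v| = O(y^{-2})$ hold uniformly over all of $B_{2r}$, which is where $r \le y/4$ enters, and (ii) bookkeeping the powers of $r$ so that the $r^3 y^{-2}$ from the forcing term and the $r^{1/2}$ from the cutoff-gradient term combine into the clean $O(r|\ln y|^2)$. If one preferred not to invoke smoothness of $v$, the identical computation is valid in the weak formulation with test function $\zeta^2 v$, but since $u_3$ has already been shown to be smooth this is not needed.
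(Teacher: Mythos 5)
Your proposal is correct and is essentially the same argument as the paper's: both are Caccioppoli-type cutoff energy estimates on $B_{2r}$ using the bounds $|\Delta v| = O(y^{-2})$ and $|v| = O(\ln y)$, with the cross term absorbed by Cauchy--Schwarz/Young. The only cosmetic difference is that the paper expands $\int|\nabla(\chi v)|^2$ while you test the equation with $\zeta^2 v$; the resulting bounds $O(r^3y^{-2}\ln y) + O(r(\ln y)^2)$ are identical.
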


\begin{proof}
    For each ball $B_r$, we may choose a cut-off function $\chi$ that is supported in $B_{2r}$ (the ball of radius $2r$ with the same center) such that:
    $$\chi = 1 ~~ \text{in} ~~ B_r, ~~~ \text{and} ~~ |\nabla^k \chi| = O(r^{-k}), ~~ \text{for any non-negative integer $k$}.$$

Then 

$$\int_{B_{2r}}|\nabla(\chi v)|^2 = -\int_{B_{2r}}\Delta(\chi v) \cdot (\chi v) \leq \int_{B_{2r}} (|\Delta v||v| + 2|\nabla(\chi v)||\nabla \chi||v| + (|\Delta \chi| + 2|\nabla \chi|^2)|v|^2) $$
$$\leq \dfrac{1}{2} \int_{B_{2r}}|\nabla (\chi v)|^2 + \int_{B_{2r}} (|\Delta v||v| + \dfrac{C}{r^2}|v|^2), $$
where $C$ is a large enough constant.\\

Recall that

$$\Delta v = e^{-2v}|P(z)|^2 = O(y^{-2}), ~~ v = O(\ln y). $$
It implies
$$\int_{B_r} |\nabla v|^2 \leq \int_{B_{2r}}|\nabla(\chi v)|^2 \leq 2\int_{B_{2r}} (|\Delta v||v| + \dfrac{C}{r^2}|v|^2) = O(r^3y^{-2}|\ln y|) + O(r (\ln y)^2) = O(r (\ln y)^2).  $$

\end{proof}

Now we prove that $ |\nabla 
 v| = O(y^{-\epsilon})$.

 \begin{proof}

We assume $r \leq \dfrac{y}{8}$ and let $G$ be the Green's function of the Laplacian centered at $(z, y)$. Let $B_r$ be the ball of radius $r$ with the same center. Let $\chi$ be the same cut-off function as in the previous lemma. Then

$$(\nabla v)(z, y) = \int_{B_{2r}} G  \Delta (\chi \nabla v), $$
where $(\nabla v)(z, y)$ is $\nabla v$ evaluated at the point $(z, y)$. Using integration by parts, one can verify that
$$|(\nabla v)(z, y)| \leq  \int_{B_{2r}} G |\Delta (\chi \nabla v)| \leq \int_{B_{2r}}(8(|\nabla G| |(\nabla \chi)| + |G| |\nabla^2 \chi|) |\nabla v| + |\nabla (G\chi)| |\Delta v|). $$
We have $|\nabla G| |(\nabla \chi)| + |G| |\nabla^2 \chi| = O(r^{-3})$, so

$$\int_{B_{2r}}(8(|\nabla G| |(\nabla \chi)| + |G| |\nabla^2 \chi|) |\nabla v|)\leq \int_{B_{2r}}(8(|\nabla G| |(\nabla \chi)| + |G| |\nabla^2 \chi|)^2)^{\frac{1}{2}} (\int_{B_{2r}}|\nabla v|^2)^{\frac{1}{2}} $$
$$= O(r^{-\frac{3}{2}} \cdot r^{\frac{1}{2}} |\ln y|) = O(r^{-1} \ln y).  $$

Moreover, 

$$\int_{B_{2r}} |\nabla (G \chi)| = O(r^2). $$
So
$$\int_{B_{2r}}|\nabla(G\chi)| |\Delta v| = O(r^2 y^{-2}). $$

Remember that we are working on the region such that $y \rightarrow +\infty$. We may choose $r = \sqrt{y}$, then we get

$$|\nabla v| = O(r^{-1} |\ln y|) + O(r^2 y^{-2}) = O(y^{-\frac{1}{2}}|\ln y|) = O(y^{-\epsilon}),$$
where $0 < \epsilon < \dfrac{1}{2}$.
    
\end{proof}

\paragraph{The generalized Nahm pole boundary condition}~\\

We verify the generalized Nahm pole boundary condition for $u_3$. In this section, we always assume $\epsilon$ is a small enough constant and $y \rightarrow 0$ (which means, all inequalities only work when $y$ is small). Note that both $u_1$ and $u_2$ satisfy the generalized Nahm pole boundary condition.\\

Since $u_1 \leq u_3 \leq u_2$, cleary we have

$$u_3 = u_0 + O(y^{\epsilon}). $$

So it suffices to check that

$$|\nabla (u_3 - u_0)| = O(y^{-1 + \epsilon}). $$

Let $v = u_3 - u_0 = O(y)$. Then

$$\Delta v  + (e^{-2v} - 1) e^{-2u_0} |P(z)|^2 = 0. $$

So

$$\Delta v = - (e^{-2v} - 1) e^{-2u_0} |P(z)|^2 = O(y^{-1}). $$

The following lemma is similar with lemma \ref{lemma 1}, except that we are working in the region $y \rightarrow 0$ now and the definition of $v$ is also different.

\begin{lemma}
    Choose a point $(z, y)$ first. Suppose $r \leq \dfrac{y}{4}$. Let $B_r$ be the ball of radius $r$ centered at $(z, y)$. Then
$$\int_{B_r} |\nabla v|^2 = O(r y^2).$$
\end{lemma}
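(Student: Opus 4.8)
The plan is to mimic the proof of Lemma~\ref{lemma 1} almost verbatim, changing only the estimates that reflect the fact that we are now in the region $y\to 0$ with $v=u_3-u_0=O(y)$ and $\Delta v=O(y^{-1})$. First I would fix the point $(z,y)$ and the radius $r\le y/4$, and choose the same kind of cut-off function $\chi$ supported in $B_{2r}$ with $\chi\equiv 1$ on $B_r$ and $|\nabla^k\chi|=O(r^{-k})$; note that since $r\le y/4$, the whole ball $B_{2r}$ stays in the region where the bounds $v=O(y)$ and $\Delta v=O(y^{-1})$ are valid (the relevant quantities do not oscillate much over a ball of radius $\tfrac{y}{2}$, so we may treat $y$ as essentially constant on $B_{2r}$).

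Then the core computation is the integration-by-parts identity
\[
\int_{B_{2r}}|\nabla(\chi v)|^2 = -\int_{B_{2r}}\Delta(\chi v)\cdot(\chi v)
\le \tfrac12\int_{B_{2r}}|\nabla(\chi v)|^2 + \int_{B_{2r}}\Bigl(|\Delta v||v| + \tfrac{C}{r^2}|v|^2\Bigr),
\]
exactly as before, using $\Delta(\chi v)=\chi\Delta v + 2\nabla\chi\cdot\nabla v + v\Delta\chi$ and Cauchy--Schwarz (Young's inequality) to absorb the $2\nabla(\chi v)\cdot\nabla\chi\, v$ term into the left side. Absorbing the $\tfrac12\int|\nabla(\chi v)|^2$ gives
\[
\int_{B_r}|\nabla v|^2 \le \int_{B_{2r}}|\nabla(\chi v)|^2 \le 2\int_{B_{2r}}\Bigl(|\Delta v||v| + \tfrac{C}{r^2}|v|^2\Bigr).
\]
Now I plug in the new bounds $|\Delta v|=O(y^{-1})$ and $|v|=O(y)$, together with $\mathrm{vol}(B_{2r})=O(r^3)$: the first term is $O(r^3\cdot y^{-1}\cdot y)=O(r^3)$, and the second is $O(r^{-2}\cdot r^3\cdot y^2)=O(r\,y^2)$. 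Since $r\le y/4$ we have $r^3\le \tfrac{1}{16} r\,y^2$, so the first term is dominated by the second, yielding $\int_{B_r}|\nabla v|^2 = O(r\,y^2)$, which is the claim.

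I do not expect a serious obstacle here; this lemma is genuinely the easy analogue of Lemma~\ref{lemma 1}. The one point requiring a word of care is the justification that $v=O(y)$ and $\Delta v=O(y^{-1})$ hold uniformly on the slightly larger ball $B_{2r}$ and not merely at the center $(z,y)$ — this is where the hypothesis $r\le y/4$ is used, ensuring $B_{2r}\subset\{y'\ge y/2\}$ so that $y'$ and $y$ are comparable throughout $B_{2r}$; the bounds $v=O(y)$, coming from $u_1\le u_3\le u_2$ with $u_1-u_0,u_2-u_0=O(y)$, and $\Delta v=-(e^{-2v}-1)e^{-2u_0}|P(z)|^2$ with $e^{-u_0}|P(z)|=O(y^{-1})$ and $e^{-2v}-1=O(v)=O(y)$ then give $\Delta v=O(y\cdot y^{-2})=O(y^{-1})$, all valid on $B_{2r}$. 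After this lemma, the pointwise gradient bound $|\nabla(u_3-u_0)|=O(y^{-1+\epsilon})$ will follow by the same Green's-function argument used above for the symmetry-breaking case, choosing $r$ to be a suitable small power of $y$.
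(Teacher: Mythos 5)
Your proposal is correct and follows essentially the same route as the paper: the same cut-off/integration-by-parts identity from Lemma \ref{lemma 1}, followed by substituting $|v|=O(y)$ and $|\Delta v|=O(y^{-1})$ and using $r\le y/4$ to absorb the $O(r^3)$ term into $O(ry^2)$. The extra care you take in justifying that the bounds hold uniformly on $B_{2r}$ is a welcome (if implicit in the paper) clarification.
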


\begin{proof}
  We choose the same cut-off function supported in $B_{2r}$ and equal $1$ in $B_r$ as always. Then the same argument as in the last paragraph,

$$\int_{B_r} |\nabla v|^2 = O( \int_{B_{2r}} (|\Delta v||v| + \dfrac{1}{r^2}|v|^2)) = O(r^3) + O(r y^2) = O(ry^2).  $$
\end{proof}

Now we prove that $ |\nabla 
 v| = O(y^{-1 + \epsilon})$. This is also similar with the last subsection (the real symmetry breaking).

 \begin{proof}

We assume $r \leq \dfrac{y}{8}$ and let $G$ be the Green's function of the Laplacian centered at $(z, y)$. Let $B_r$ be the ball of radius $r$ with the same center. Let $\chi$ be the same cut-off function as always. Then

$$(\nabla v)(z, y) = \int_{B_{2r}} G  \Delta (\chi \nabla v), $$
where $(\nabla v)(z, y)$ is $\nabla v$ evaluated at the point $(z, y)$.\\

We have $$|\nabla G| |(\nabla \chi)| + |G| |\nabla^2 \chi| = O(r^{-3}), ~~~ \int_{B_{2r}} |\nabla (G \chi)| = O(r^2).$$ Using the same method as the real symmetry breaking case,

$$|(\nabla v)(z, y)| \leq \int_{B_{2r}}(8(|\nabla G| |(\nabla \chi)| + |G| |\nabla^2 \chi|)^2)^{\frac{1}{2}} (\int_{B_{2r}}|\nabla v|^2)^{\frac{1}{2}} + \int_{B_{2r}}|\nabla(G\chi)| |\Delta v|$$
$$= O(r^{-\frac{3}{2}} \cdot r^{\frac{1}{2}} y) + O(r^2 y^{-1}).  $$

 We may choose $r = \dfrac{y}{16}$. Then we get

$$|\nabla v| = O(r^{-\frac{3}{2}} \cdot r^{\frac{1}{2}} y) + O(r^2 y^{-1}) = O(1).  $$

    \end{proof}

\subsection{The uniqueness}\label{Subsection: The uniqueness}

Suppose $u_0, u_1, u_2, u_3$ all have the same meanings as in the last subsection. We prove that the solution of $(*)$ is unique under certain constraints:

\begin{proposition} \label{uniqueness proposition}
    Suppose $u$ is a solution of $(*)$ such that,

$$|u - u_1| \leq C(y^{\epsilon} + y^{1 - \epsilon}), $$
where $C$ is any fixed large constant and $\epsilon \in (0, 1)$ is another fixed real number. Then $u = u_3$.
\end{proposition}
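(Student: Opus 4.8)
The plan is to write $w = u - u_3$ and to prove $w \equiv 0$ by a Phragmén--Lindelöf argument on the half-space $X = \IR^2 \times (0,+\infty)$. First I would record an a priori bound on $w$. Since $u_3$ is the Perron solution it lies between the sub-solution $u_2$ and the super-solution $u_1$, so $0 \le u_1 - u_3 \le u_1 - u_2 = y - f(y)$. A direct estimate of $f$ gives $y - f(y) \le C'(y^{\epsilon} + y^{1-\epsilon})$ for all $y>0$ (it equals $y$ for small $y$ and is $O(\ln y)$ for large $y$, both dominated by the right-hand side once $0<\epsilon<1$). Combined with the hypothesis,
\[
|w(z,y)| \;\le\; |u-u_1| + |u_1-u_3| \;\le\; C_1\,(y^{\epsilon} + y^{1-\epsilon}) \qquad \text{on } X ,
\]
with $C_1$ independent of $z$. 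In particular $w\to 0$ uniformly in $z$ as $y\to 0^{+}$, so $w$ extends continuously to $\{y=0\}$ with boundary value $0$; and since $\epsilon,1-\epsilon<1$, on the hemisphere $\{|z|^2+y^2=R^2,\ y>0\}$ we have $|w|\le C_1(R^{\epsilon}+R^{1-\epsilon})=o(R)$.

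Next I would set up the equation for $w$. Since $u$ and $u_3$ both solve $(*)$,
\[
\Delta w \;=\; \big(e^{-2u_3}-e^{-2u}\big)|P(z)|^2 \;=\; V\,w, \qquad V:=2\Big(\int_0^1 e^{-2(tu+(1-t)u_3)}\,dt\Big)|P(z)|^2 \;\ge\; 0 .
\]
Hence on the open set $\{w>0\}$ we have $\Delta w = Vw\ge 0$, so $w$ is subharmonic there, and therefore $w^{+}:=\max(w,0)$ is subharmonic on all of $X$. Symmetrically, on $\{w<0\}$ the function $-w$ is subharmonic, so $w^{-}:=\max(-w,0)$ is subharmonic on $X$.

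Finally I would apply the half-space Phragmén--Lindelöf principle to $w^{+}$ (and then to $w^{-}$). On the half-ball $D_R=\{|z|^2+y^2<R^2\}\cap X$, let $v_R$ be harmonic in $D_R$, equal to $0$ on the flat face $\{y=0\}$ and to $1$ on the spherical face $\partial^{+}D_R=\{|z|^2+y^2=R^2,\ y>0\}$, and set $\Phi_R = M_R\,v_R$ with $M_R:=\sup_{\partial^{+}D_R}w^{+}=o(R)$. Then $w^{+}-\Phi_R$ is subharmonic in $D_R$ and $\le 0$ on $\partial D_R$ (it is $0$ on the flat face and $\le w^{+}-M_R\le 0$ on $\partial^{+}D_R$), hence $\le 0$ in $D_R$. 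Fixing $p=(z_0,y_0)\in X$ and letting $R\to\infty$, scaling ($v_R(x)=v_1(x/R)$, with $v_1$ vanishing to first order at the interior of the flat face) gives $\Phi_R(p)=M_R\,v_1(p/R)\le M_R\,c_0\,y_0/R\to 0$, so $w^{+}(p)\le 0$; thus $w^{+}\equiv 0$. The same argument yields $w^{-}\equiv 0$, whence $w\equiv 0$ and $u=u_3$.

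The hard part is the last step: the bound on $w$ is only $o(R)$, which is exactly the borderline growth for a half-space, so a coarser barrier (e.g. comparing directly with $\delta y$) fails in the region where $|z|$ is large but $y$ is of moderate size, and one is forced into the sharp Phragmén--Lindelöf comparison above. It is precisely the hypothesis $|u-u_1| = O(y^{\epsilon}+y^{1-\epsilon})$ with $\epsilon\in(0,1)$ that keeps the growth strictly sub-linear and makes the argument close. The remaining ingredients --- the elementary estimate on $f$, the subharmonicity of $w^{\pm}$ across $\{w=0\}$, and the first-order behaviour of $v_R$ near the flat face away from the corner $\{|z|^2+y^2=R^2,\ y=0\}$ --- are routine.
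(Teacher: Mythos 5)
Your proof is correct, but it follows a genuinely different route from the paper's. You linearize the difference $w=u-u_3$ directly ($\Delta w=Vw$ with $V\ge 0$), observe that $w^{\pm}$ are subharmonic, and close with the sharp half-space Phragm\'en--Lindel\"of comparison against the harmonic measure of the spherical cap, using that the growth $O(y^{\epsilon}+y^{1-\epsilon})$ is strictly sublinear. The paper instead proves a comparison lemma (Lemma \ref{comparison lemma}) by a one-dimensional reduction: for $\Delta v\ge 0$ with $|v|\le C(y^{\epsilon}+y^{1-\epsilon})$ it shows that $f(y)=\sup_z v(z,y)$ is convex with $f(0)=0$, so sublinear growth forces $f\le 0$ --- this is the same Phragm\'en--Lindel\"of content as your harmonic-measure step, obtained by slicing rather than by barriers. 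The paper then re-runs the Perron construction with the widened barriers $u_0+y\pm C(y^{\epsilon}+y^{1-\epsilon})$ (verified to be super/sub-solutions in Lemma \ref{a small lemma}), introduces the inf-of-super-solutions $u_3'$, sandwiches $u$ between $u_3'$ and the new $u_3$, and applies the comparison lemma to conclude $u_3=u_3'$. Your argument buys a shorter path: it never needs the widened barriers or the dual Perron solution $u_3'$, only the a priori sandwich $u_2\le u_3\le u_1$ from the original construction to control $|u_1-u_3|$. What the paper's route buys is that its comparison lemma is stated once and reused (it also yields the corollary on ordered pairs of solutions), and its slicing proof avoids any discussion of harmonic measure near the corner of the half-ball --- a point where your argument, while routine (upper semicontinuity of $w^{+}-\Phi_R$ and $\limsup\le 0$ at the discontinuity of the boundary data suffice), does require a word of care.
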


The author conjectures under weaker conditions it is still unique.
\begin{conjecture}
    Suppose $u$ is a solution of $(*)$ such that, for some $\epsilon > 0$,
    \begin{itemize}
        \item When $y \rightarrow +\infty$, $e^{-u}|P(z)| = O(y^{-\epsilon})$ and $|\nabla(u - y)| = O(y^{-\epsilon})$.
        \item When $y \rightarrow 0$, $u = u_0 + O(y^{\epsilon})$ and $|\nabla(u - u_0)| = O(y^{-1 + \epsilon})$.
    \end{itemize}
    Then $u = u_3$. (The inequalities do not necessarily uniform in $z$ in the conjecture.)
\end{conjecture}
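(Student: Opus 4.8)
The plan is to prove $u\le u_3$ and $u\ge u_3$ separately, each time comparing $u$ against an explicit barrier built from $u_0$ and running a maximum principle; since $X$ is non-compact the real work is controlling the comparison functions at infinity. Throughout I use the Perron construction of the previous subsection: $u_3$ is the supremum of all subsolutions of $(*)$ lying below the supersolution $u_1=u_0+y$, so $u_2\le u_3\le u_1$ and $u_3$ is the largest solution of $(*)$ with $u_3\le u_1$. The hypothesis $|u-u_1|\le C(y^{\epsilon}+y^{1-\epsilon})$ rewrites as $|u-u_0-y|\le C(y^{\epsilon}+y^{1-\epsilon})$, hence $u-u_0\to 0$ as $y\to 0$ and $u-u_0\le y+C(y^{\epsilon}+y^{1-\epsilon})$ on all of $X$.

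For the upper bound I would fix $\delta>0$ and compare $u$ with $U_{\delta}:=u_0+(1+\delta)y$. The same computation that shows $u_1$ is a supersolution gives $\Delta U_{\delta}+e^{-2U_{\delta}}|P|^2=(e^{-2(1+\delta)y}-1)e^{-2u_0}|P|^2\le 0$, so $U_{\delta}$ is a supersolution. Put $w_{\delta}=u-U_{\delta}$. On $\{w_{\delta}>0\}$ one has $u-u_0=(1+\delta)y+w_{\delta}>0$, hence
$$\Delta w_{\delta}=e^{-2u_0}|P|^2\bigl(1-e^{-2(u-u_0)}\bigr)\ge 0,$$
so $w_{\delta}^{+}=\max(w_{\delta},0)$ is subharmonic on $X$. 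The crucial gain is that $w_{\delta}\le C(y^{\epsilon}+y^{1-\epsilon})-\delta y$, which is $\le 0$ for $y\ge y_{*}(\delta)$; thus $\{w_{\delta}>0\}$ is confined to the slab $\{0<y<y_{*}(\delta)\}$, where $w_{\delta}^{+}$ is bounded, tends to $0$ as $y\to 0$, and vanishes on the top face. On a slab, a bounded subharmonic function with boundary values $\le 0$ is $\le 0$: subtract $\eta$ times a positive harmonic function of the shape $\cosh(\lambda x_1)\cosh(\lambda x_2)\cos(\sqrt{2}\lambda(y-y_m))$ (which is bounded below on the slab for $\lambda$ small and blows up as $|z|\to\infty$), apply the ordinary maximum principle on the resulting bounded superlevel set, and let $\eta\to 0$. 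Hence $u\le U_{\delta}$ everywhere; letting $\delta\to 0$ gives $u\le u_1$, and since $u$ is a subsolution lying below $u_1$, maximality of $u_3$ forces $u\le u_3$.

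For the lower bound I then have $w:=u_3-u\ge 0$, with $\Delta w=(e^{-2u}-e^{-2u_3})|P|^2\ge 0$ on all of $X$, so $w$ is subharmonic; moreover $w\to 0$ as $y\to 0$ and $0\le w\le u_1-u\le C(y^{\epsilon}+y^{1-\epsilon})$. The plan is to look at the cross-sectional supremum $W(y):=\sup_z w(z,y)<\infty$. Comparing $w$ on a slab $\{y_1\le y\le y_2\}$ with the harmonic function that is linear in $y$ and equals $W(y_1),W(y_2)$ on the two faces, the same slab Phragm\'en--Lindel\"of argument gives $w(z,y)\le \tfrac{y_2-y}{y_2-y_1}W(y_1)+\tfrac{y-y_1}{y_2-y_1}W(y_2)$; taking $\sup_z$ shows $W$ is convex on $(0,\infty)$. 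Since $W\ge 0$ and $W(0^{+})=0$, convexity makes $W(y)/y$ nondecreasing; but $W(y)/y\le C(y^{\epsilon-1}+y^{-\epsilon})\to 0$ as $y\to\infty$, so $W\equiv 0$, i.e. $w\equiv 0$ and $u=u_3$. (As a sanity check the class is non-empty: $u_3$ itself satisfies $u_1-u_3\le u_1-u_2=y-f(y)=O(y^{\epsilon}+y^{1-\epsilon})$.)

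I expect the main obstacle to be the behaviour at spatial infinity $|z|\to\infty$: unlike the $y$-direction, where $y$ and its multiples provide harmonic barriers vanishing on the Nahm-pole boundary, there is no superharmonic function on $X$ tending to $+\infty$ as $|z|\to\infty$, so one cannot control $w$ there by a direct barrier. The whole point of routing the argument through a $\delta$-dependent slab in the upper bound, and through the convexity of $W(y)$ in the lower bound, is to reduce every use of the maximum principle to a slab, where the Phragm\'en--Lindel\"of lemma (whose proof is the only mildly delicate technical ingredient) applies. One should also check that the comparisons remain valid across the zeros of $P$, where $\Delta w$ merely vanishes instead of being strictly positive (harmless for the maximum principle), and, if knotted points are present, near those points, where the hypothesis pins down the common singular behaviour of $u$, $u_3$ and $u_1$ so that their differences stay controlled.
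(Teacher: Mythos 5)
There is a genuine gap, and it is located in your very first reduction. The statement you are asked to prove is the \emph{conjecture}, whose hypotheses are the pointwise asymptotic conditions $e^{-u}|P|=O(y^{-\epsilon})$, $|\nabla(u-y)|=O(y^{-\epsilon})$ as $y\to+\infty$ and $u=u_0+O(y^{\epsilon})$, $|\nabla(u-u_0)|=O(y^{-1+\epsilon})$ as $y\to 0$, with the explicit caveat that none of these is assumed uniform in $z$. Your proof instead opens with ``the hypothesis $|u-u_1|\le C(y^{\epsilon}+y^{1-\epsilon})$,'' which is the hypothesis of Proposition \ref{uniqueness proposition}, not of the conjecture, and it is a strictly stronger, globally uniform bound. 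Every load-bearing step of your argument depends on that uniformity: the confinement of $\{w_{\delta}>0\}$ to a slab $\{0<y<y_{*}(\delta)\}$ requires $w_{\delta}\le C(y^{\epsilon}+y^{1-\epsilon})-\delta y$ with a $z$-independent $C$, and the finiteness and sublinearity of $W(y)=\sup_{z}w(z,y)$ in the lower bound requires $0\le w\le C(y^{\epsilon}+y^{1-\epsilon})$ uniformly in $z$. The conjecture's hypotheses do not yield such a bound: the constants in the $O(\cdot)$'s may degenerate as $|z|\to\infty$, and nothing at all is assumed in the intermediate range of $y$, so one cannot even integrate $|\nabla(u-y)|=O(y^{-\epsilon})$ down to a controlled base point uniformly in $z$. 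This non-uniformity is precisely why the paper proves the proposition but leaves the conjecture open; you have not addressed it.

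Setting that aside, what you have written is essentially a correct proof of Proposition \ref{uniqueness proposition}, by a route close to but not identical with the paper's. Your lower bound is the paper's Lemma \ref{comparison lemma} in disguise: the slab comparison showing $W(y)=\sup_z w$ is convex, together with $W(0^{+})=0$ and sublinear growth, is exactly the paper's concavity argument for $f(y)=\sup_z v$. Your upper bound, via the one-parameter family of supersolutions $U_{\delta}=u_0+(1+\delta)y$ and a Phragm\'en--Lindel\"of principle on slabs, differs from the paper's route, which instead re-runs Perron with the shifted barriers $u_0+y\pm C(y^{\epsilon}+y^{1-\epsilon})$ (justified by Lemma \ref{a small lemma}), sandwiches $u$ between the resulting extremal sub- and super-Perron solutions, and applies the comparison lemma once to identify them. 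Your version buys a cleaner identification of $u_3$ as the maximal solution below $u_1$ at the cost of the extra slab maximum-principle lemma; but in either form the argument only reaches the proposition, not the conjecture.
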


The remaining of this subsection proves proposition \ref{uniqueness proposition}

\begin{lemma} \label{comparison lemma}
    Suppose $v$ is a smooth function on $X$ such that for some $\epsilon > 0$ and $C > 0$,  $|v| \leq C(y^{\epsilon} + y^{1- \epsilon})$. Moreover, $\Delta v \geq 0$ on the entire $X$. Then $v \leq 0$. 
\end{lemma}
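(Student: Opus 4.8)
The plan is to reduce this three–dimensional statement to a one–dimensional convexity fact. Since the hypothesis $|v|\le C(y^\epsilon+y^{1-\epsilon})$ does not involve $z=x_1+ix_2$ at all, the quantity $M(y):=\sup_{z\in\IC}v(z,y)$ is finite for every $y>0$ and satisfies $M(y)\le C(y^\epsilon+y^{1-\epsilon})$; moreover, because $\epsilon\in(0,1)$, the same bound forces $v$ to extend continuously to $\{y=0\}$ with boundary value $0$, and it makes $v$ bounded on each slab $\{a\le y\le b\}$ with $a>0$. It therefore suffices to prove $M\le 0$ on $(0,\infty)$, and for that I would first show that $M$ is convex and then feed in the two–sided growth bound.

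For convexity I would run a Phragm\'en--Lindel\"of / Hadamard three–lines argument on each slab $S_{a,b}=\{(z,y):a\le y\le b\}$ with $0<a<b$. Let $\ell(y)=M(a)+\frac{M(b)-M(a)}{b-a}(y-a)$ be the affine interpolant; then $v-\ell$ is subharmonic, bounded on $S_{a,b}$, and satisfies $v-\ell\le 0$ on both hyperplanes $\{y=a\}$ and $\{y=b\}$ by the definition of $M$. To upgrade this to $v-\ell\le0$ throughout $S_{a,b}$ I would use the explicit harmonic barrier
\[
w_\eta(x)=\eta\,\cos\!\Big(\mu\big(y-\tfrac{a+b}{2}\big)\Big)\big(\cosh(\mu x_1)+\cosh(\mu x_2)\big),
\]
with $\mu>0$ chosen so small that $\mu(b-a)<\pi$; then $w_\eta$ is harmonic, strictly positive on $S_{a,b}$, and grows at least like $e^{\mu|z|/\sqrt2}$ as $|z|\to\infty$. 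Applying the maximum principle to the subharmonic function $v-\ell-w_\eta$ on the cylinders $\{|z|<R\}\cap S_{a,b}$: the side faces are controlled once $R$ is large (because $v-\ell$ is bounded while $w_\eta$ blows up), and on the two flat faces $v-\ell-w_\eta<0$. Letting $R\to\infty$ and then $\eta\to0$ gives $v-\ell\le0$ on $S_{a,b}$, i.e. $M(y)\le\ell(y)$ for $y\in(a,b)$. Since $a<b$ were arbitrary and $M$ is locally bounded, $M$ is convex on $(0,\infty)$.

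Finally I would conclude. From $M(y)\le C(y^\epsilon+y^{1-\epsilon})$ we get $\limsup_{y\to0^+}M(y)\le 0$ and $M(y)/y\le C(y^{\epsilon-1}+y^{-\epsilon})\to 0$ as $y\to\infty$. Suppose $M(y_1)>0$ for some $y_1$; pick $\eta$ with $0<\eta<M(y_1)$ and $y_0\in(0,y_1)$ with $M(y_0)<\eta$ (possible since $\limsup_{y\to0^+}M\le0$). Monotonicity of chord slopes for convex functions gives $M(y)\ge M(y_1)+m(y-y_1)$ for all $y>y_1$, where $m=\frac{M(y_1)-M(y_0)}{y_1-y_0}>0$; hence $\liminf_{y\to\infty}M(y)/y\ge m>0$, contradicting the growth bound. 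Therefore $M\le 0$, i.e. $v\le 0$ on $X$. I expect the slab step to be the only real obstacle: the point is to produce a barrier that is simultaneously positive on the whole slab and of larger-than-polynomial growth in $|z|$, so that the maximum principle on the cylinders actually closes; the continuity of $v$ down to $y=0$ and its boundedness on slabs are routine consequences of the stated bound but must be noted explicitly.
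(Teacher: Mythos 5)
Your proof is correct, and its overall skeleton coincides with the paper's: both arguments pass to $f(y)=\sup_{z}v(z,y)$, prove that this function of one variable is convex, and then combine convexity with $f(0^+)\le 0$ and the sublinear bound $f(y)\le C(y^{\epsilon}+y^{1-\epsilon})$ to force $f\le 0$ via chord slopes. Where you genuinely diverge is in how convexity is established. The paper argues pointwise: at a near-supremum point $z_0$ on the slice $y=y_0$ it penalizes by $-\epsilon|z-z_0|^2$ to make the horizontal Hessian nonpositive, deduces $\partial_y^2 v\ge -2\epsilon$ there, and converts this into a $\liminf$ second-difference inequality for $f$, concluding convexity. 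You instead run a Phragm\'en--Lindel\"of argument on each slab $\{a\le y\le b\}$ with the explicit harmonic barrier $\eta\cos(\mu(y-\tfrac{a+b}{2}))(\cosh(\mu x_1)+\cosh(\mu x_2))$, $\mu(b-a)<\pi$, which is indeed harmonic, strictly positive on the slab, and exponentially growing in $|z|$; since $v$ is bounded on each slab by hypothesis, the maximum principle on large cylinders plus $R\to\infty$, $\eta\to 0$ yields $v\le\ell$ and hence convexity of $M$. Your route buys rigor at the delicate step: it sidesteps the questions of whether the supremum in $z$ is attained, whether the penalized function really has its maximum exactly at $z_0$, and the passage from a $\liminf$ second-difference condition to genuine convexity, all of which the paper treats somewhat informally. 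The cost is the need to exhibit a barrier with the right sign and growth, which you do correctly. One cosmetic remark: you use $\eta$ both for the barrier amplitude and for the threshold in the final chord-slope argument; these are unrelated and should be given distinct names.
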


\begin{proof}
    Let $f(y) = \sup\limits_{z \in \IC}v(z, y)$. Note that since $v$ is bounded on each fixed $y$ slice, $f(y)$ is a well-defined function on $y \in [0, +\infty)$. Moreover, $f(y) \leq C(y^{\epsilon} + y^{1-\epsilon})$. In particular, we have $f(0) = 0$.\\

    Note that since $v$ is continuous, $f(y)$ is also a continuous function. We show that $f(y)$ is a concave up function.\\

    In fact, for any $\epsilon > 0$ and $y = y_0 > 0$. Let $z_0$ be a point such that
    $f(y_0) - v(y_0, z_0) < \epsilon$. Moreover, we may assume $v(z, y_0) - \epsilon |z - z_0|^2$ has a strict maximum over $z \in \IC$ at $z = z_0$. In particular, the $\partial_1^2 + \partial_2^2$ acting on $v(z, y_0) - \epsilon |z - z_0|^2$ has a negative value at $z = z_0$.\\

    Note that $\Delta(v(z, y_0) - \epsilon |z - z_0|^2) \geq - 2 \epsilon$. Since 
    $\Delta = \partial_1^2 + \partial_2^2 + \partial_y^2$. 
    This indicates that

    $$(\partial_y^2v)(z_0, y_0) \geq - 2 \epsilon. $$

    Since $\partial_y^2 v$ is smooth, we actually have $$(\partial_y^2v) > - 3 \epsilon $$
    in a neighbourhood of $(z_0, y_0)$. So using the mean value theorem,
    $$\liminf\limits_{h \rightarrow 0} \dfrac{v(z_0, y_0 - h) + v(z_0, y_0 + h) - 2v(z_0, y_0)}{h^2} = \partial_y^2 v(z_0, \xi) > -3\epsilon, $$
where $\xi$ is a number that can be arbitrarily close to $y_0$.\\

So

$$\liminf\limits_{h \rightarrow 0} \dfrac{f(y_0 - h) + f(y_0 + h) - 2f(y_0) + 2\epsilon}{h^2} > - 3 \epsilon.$$

Letting $\epsilon \rightarrow 0$, we get

$$\liminf\limits_{h \rightarrow 0} \dfrac{f(y_0 - h) + f(y_0 + h) - 2f(y_0)}{h^2} = 0. $$

Since this is true at any point, we know that $f(y)$ is a concave up function.\\

Fix any $y_0 > 0$. Then since $f(y)$ is concave up, for any other $y > 0$,

$$f(y)y_0 + f(0)(y - y_0) \geq f(y_0)y. $$

Recall that $f(0) = 0$, we have 

$$f(y) \geq \dfrac{f(y_0)}{y_0} y. $$

But $f(y) \leq C(y^{\epsilon} + y^{1 - \epsilon})$. We must have $f(y_0) \leq 0$. This is true for any $y_0$. Thus $f(y) \leq 0$ for any $y > 0$. And it implies that $v \leq 0$.

\end{proof}

\begin{corollary}
    If $v_1$ and $v_2$ are two solutions of $(*)$ on $X$ such that $0 \leq v_2 - v_1 \leq C(y^{\epsilon} + y^{1 - \epsilon})$ on the entire $X$ for some $C > 0$ and $\epsilon \in (0, 1)$. Then $v_1 = v_2$.
\end{corollary}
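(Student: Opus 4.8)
The corollary follows almost immediately from Lemma \ref{comparison lemma}, so the plan is short.

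\bigskip

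\textbf{Proof plan.} Set $v = v_2 - v_1$. By hypothesis $0 \le v \le C(y^\epsilon + y^{1-\epsilon})$, so $v$ is a smooth function on $X$ with exactly the two-sided growth bound required by Lemma \ref{comparison lemma}. The only thing left to check is the differential inequality $\Delta v \ge 0$. Since both $v_1$ and $v_2$ solve $(*)$, subtracting the two equations gives
$$\Delta v = \Delta v_2 - \Delta v_1 = -e^{-2v_2}|P(z)|^2 + e^{-2v_1}|P(z)|^2 = \bigl(e^{-2v_1} - e^{-2v_2}\bigr)|P(z)|^2.$$
Because $v_2 \ge v_1$ pointwise, we have $e^{-2v_2} \le e^{-2v_1}$, so the right-hand side is $\ge 0$; hence $\Delta v \ge 0$ on all of $X$. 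Lemma \ref{comparison lemma} now yields $v \le 0$, and combined with $v \ge 0$ we conclude $v \equiv 0$, i.e. $v_1 = v_2$.

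\bigskip

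There is essentially no obstacle here: the content is entirely in Lemma \ref{comparison lemma} (the concavity-of-the-sup-function argument), which is already proved. The corollary is just the observation that the difference of two solutions of the semilinear equation $(*)$ is subharmonic precisely when it has the correct sign, and that the growth hypothesis is preserved under subtraction. One small point worth stating explicitly in the writeup is that $v$ is smooth (so that $\Delta v$ makes classical sense and Lemma \ref{comparison lemma} applies verbatim): this holds because $v_1, v_2$ are smooth solutions of $(*)$ by the elliptic regularity discussion following the definition of weak solutions.
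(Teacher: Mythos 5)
Your proposal is correct and follows exactly the same route as the paper: subtract the two copies of $(*)$ to get $\Delta(v_2 - v_1) = (e^{-2v_1} - e^{-2v_2})|P(z)|^2 \ge 0$ from $v_2 \ge v_1$, then invoke Lemma \ref{comparison lemma} to conclude $v_2 - v_1 \le 0$ and hence $v_1 = v_2$. The added remark on smoothness of $v$ is a harmless (and reasonable) explicit mention of something the paper leaves implicit.
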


\begin{proof}
    We have
    $$\Delta (v_2 - v_1) = (e^{-2v_1} - e^{-2v_2})|P(z)|^2 \geq 0.  $$
So from the lemma \ref{comparison lemma}, $v_2 - v_1 \leq 0$. Hence $v_1 = v_2$.
\end{proof}

\begin{lemma} \label{a small lemma}
    Suppose $C$ is a large constant and $\epsilon \in (0, 1)$. Then there is a second order differentiable function $f(y)$ on $y \in [0, +\infty)$ with the following properties:
\begin{itemize}
    \item $f(0) = 0$.
    \item $f(y) \leq y - C(y^{\epsilon} + y^{1-\epsilon}). $
    \item $f''(y) + \dfrac{C}{y^2}(e^{-2f(y)} - 1) \geq 0.$
\end{itemize}
\end{lemma}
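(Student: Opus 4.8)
The plan is to build $f$ explicitly as a perturbation of the function $f_0(y) = y - C\ln y$ that already appeared (up to additive constants) in the definition of the sub-solution $u_2$, but now with the correction modulated so that the first two bullets hold near $y=0$ as well. Concretely, I would look for $f$ of the form
\[
f(y) = y - C\bigl(y^{\epsilon} + y^{1-\epsilon}\bigr) - g(y),
\]
where $g \geq 0$ is a small, convex, non-negative correction designed to absorb the error terms. Since the first two bullets are immediate for any $g \ge 0$ with $g(0)=0$ (indeed $f(0)=0$ because $y^\epsilon, y^{1-\epsilon}\to 0$, and $f(y)\le y - C(y^\epsilon+y^{1-\epsilon})$ is automatic), the entire content is the differential inequality in the third bullet. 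So the real task is: produce $g$ making
\[
f''(y) + \frac{C}{y^2}\bigl(e^{-2f(y)}-1\bigr) \ge 0.
\]

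The key observation is a sign analysis split into two regimes. For $y$ bounded away from $0$ and $\infty$ there is nothing delicate; one can even take $g$ to be a fixed smooth bump there. The two genuine regions are $y\to 0^+$ and $y\to+\infty$. As $y\to 0^+$: here $y^\epsilon$ dominates $y$ and $y^{1-\epsilon}$, so $f(y)\approx -Cy^\epsilon \to -\infty$, hence $e^{-2f(y)}-1 = e^{2Cy^\epsilon}-1 > 0$ is large and positive, and $\frac{C}{y^2}(e^{-2f}-1)$ blows up like $\frac{C}{y^2}e^{2Cy^\epsilon}$. Meanwhile $f''(y)$ contains the term $-C\epsilon(\epsilon-1)y^{\epsilon-2} = C\epsilon(1-\epsilon)y^{\epsilon-2}>0$, which is also positive and blows up, plus the term $-C(1-\epsilon)(-\epsilon)y^{-\epsilon-1}=C\epsilon(1-\epsilon)y^{-\epsilon-1}>0$; so near $0$ both contributions are positive and the inequality holds with room to spare, and we may take $g \equiv 0$ (or exponentially small) there. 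As $y\to+\infty$: now $y^{1-\epsilon}$ is the dominant correction, $f(y) = y - Cy^{1-\epsilon} + O(y^\epsilon)\to+\infty$, so $e^{-2f(y)}-1 \to -1$ and the bad term is $\frac{C}{y^2}(e^{-2f}-1)\approx -\frac{C}{y^2}$, which is $O(y^{-2})$ and negative; it must be dominated by $f''(y)$. We have $f''(y) = C\epsilon(1-\epsilon)y^{-1-\epsilon} + C\epsilon(1-\epsilon)y^{\epsilon-2} - g''(y)$, and the term $C\epsilon(1-\epsilon)y^{-1-\epsilon}$ decays like $y^{-1-\epsilon}$, which is slower than $y^{-2}$, hence dominates $\frac{C}{y^2}$ for $y$ large. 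So again the inequality holds for $y$ large without needing $g$.

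Thus the architecture is: verify the inequality directly in the two asymptotic regimes using the explicit leading-order computations above, observe that on any compact subinterval $[a,b]\subset(0,\infty)$ the left side is continuous and we only need to correct a bounded region, and patch with a $C^2$ convex bump $g$ supported on a compact interval and chosen large enough (in $C^0$ and $C^2$ norms, controlled using that $C$ is a free large constant and $e^{-2f(y)}-1$ is bounded below on that interval) to make the sum non-negative there; one must check the patching does not destroy the already-good asymptotics, which is automatic if $g$ has compact support in $(0,\infty)$. The main obstacle I anticipate is bookkeeping at the \emph{transition} between the regimes — in particular making sure the single free parameter $C$ can simultaneously be taken ``large enough'' for the small-$y$ analysis, the large-$y$ analysis, and the compact-region patch without circular dependence; the cleanest route is to fix $C$ from the global problem first (it is the same $C$ as in $|u-u_1|\le C(y^\epsilon+y^{1-\epsilon})$), note that increasing $C$ only helps the first bullet, then choose $g$ depending on that $C$. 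A secondary technical point is ensuring $f$ is genuinely $C^2$ (not merely piecewise), which is why I prefer a smooth $g$ and smooth elementary building blocks $y^\epsilon, y^{1-\epsilon}, y$ rather than the $\ln$-with-kink used earlier for $u_2$.
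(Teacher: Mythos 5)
Your two asymptotic regimes are sound in substance: near $y=0$ the inequality holds because $f\le 0$ there, so $e^{-2f}-1\ge 0$ while $f''>0$ (though note your intermediate claim is wrong — $f(y)\to 0$, not $-\infty$, as $y\to 0^{+}$, since $y^{\epsilon}\to 0$; only the sign $f\le 0$ matters), and the large-$y$ comparison $C\epsilon(1-\epsilon)y^{-1-\epsilon}\gg Cy^{-2}$ is correct. The gap is the middle-region patch. First, a non-negative \emph{convex} $C^2$ function with compact support in $(0,\infty)$ is identically zero, so the object you invoke does not exist; to increase $f''$ you need $-g''\ge 0$, i.e.\ a \emph{concave} correction. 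But any non-negative $C^2$ function $g$ with compact support must have $g''\ge 0$ near the endpoints of its support, so the perturbation makes $f''$ strictly worse exactly in the transition zones; your assertion that compact support makes the patching ``automatic'' is therefore backwards, and one would have to run a quantitative argument showing the slack in the transition zones absorbs the positive part of $g''$ (which in turn depends on how tall $g$ must be) — none of which you supply.

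The patch is in fact unnecessary, and this is how the paper proceeds: take $f(y)=y-C'(y^{\epsilon}+y^{1-\epsilon})$ with $C'\ge C$ sufficiently large and no correction term. Then $f''(y)=\dfrac{C'\epsilon(1-\epsilon)}{y^2}\,(y^{\epsilon}+y^{1-\epsilon})>0$ everywhere, so wherever $f\le 0$ both terms of the third bullet are non-negative. Wherever $f>0$, i.e.\ $y>C'(y^{\epsilon}+y^{1-\epsilon})$, one has in particular $y>C'y^{1-\epsilon}$, hence $y^{\epsilon}>C'$ and so $y^{\epsilon}+y^{1-\epsilon}>C'$; therefore $f''\ge \dfrac{(C')^2\epsilon(1-\epsilon)}{y^2}\ge \dfrac{C}{y^2}\ge \dfrac{C}{y^2}\bigl(1-e^{-2f}\bigr)$ as soon as $(C')^2\epsilon(1-\epsilon)\ge C$. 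The two cases cover all of $(0,\infty)$, so there is no problematic middle region at all. This also dissolves the circularity you were worried about: the only parameter to choose is $C'$, and it depends only on $C$ and $\epsilon$.
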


\begin{proof}
    In fact, we may choose a even larger $C'$ and let $$f(y) = y - C'(y^{\epsilon} + y^{1 - \epsilon}).$$
Then all three bullets are satisfied provided that $C'$ is large enough. Here is the reason: The first two bullets are obvious. For the third bullet, 

$$f''(y) + \dfrac{C}{y^2}(e^{-f(y)} - 1) = \dfrac{1}{y^2}( C'\epsilon (1 - \epsilon)(y^{\epsilon} + y^{1 - \epsilon}) + Ce^{-2f(y)} - C).  $$

when $y$ is small, $y \leq C'(y^{\epsilon} + y^{1 - \epsilon})$. So $e^{-2f(y)} - C \geq 0$ and hence the above expression is non-negative.\\

Otherwise, since $C'$ is large, when $y > C'(y^{\epsilon} + y^{1 - \epsilon})$, since $y$ has a lower bound,  we may assume 
$$C' \epsilon (1 - \epsilon) (y^{\epsilon} + y^{1 - \epsilon}) - C \geq 0. $$

So in any case, the third bullet is also true.

\end{proof}

Now here is the proof of proposition \ref{uniqueness proposition}.

\begin{proof}

 Suppose $u$ is a solution of $(*)$ such that,
$$|u - u_1| \leq C(y^{\epsilon} + y^{1 - \epsilon}).$$

In the Perron's argument, we may instead choose $u_1$ to be $u_0 + y + C(y^{\epsilon} + y^{1 - \epsilon})$ for a possibly larger constant $C$ and choose $u_2$ to be $u_0 + y - C(y^{\epsilon} + y^{1 - \epsilon})$. Since $C$ is assumed to be large, by lemma \ref{a small lemma}, one can verify that they are indeed super/sub solutions.\\

    Using these substituted $u_1$ and $u_2$, we may run Perron's argument again and get a solution, still call it $u_3$. But recall that
    $$u_3= \sup\{u ~ | ~ u ~ \text{is a sub-solution with} ~ u \leq u_2 ~ \text{on the entire} ~X\}. $$

And $u$ is a solution with $u \leq u_2$. So we have $u_3 \geq u$.\\

    On the other hand, we may modify the Perron's argument to define
    $$u_3' = \inf\{u(p) ~ | ~ u ~ \text{is a super-solution with} ~ u \geq u_1 ~ \text{on the entire} ~X\}. $$
    
By the same reason, $u_3'$ is also an actual solution. And $u_3' \leq u$. So in order to prove $u$ is unique, we only need to show that 

$$u_3 = u_3'.$$

Since we know that $u_3' \leq u_3$. And

$$\Delta (u_3 - u_3') = (e^{-2u_3} - e^{-2u_3'}) |P(z)|^2 \leq 0. $$

Moreover,
$$ u_3 - u_3' \leq |u_1 - u_2| \leq 2C(y^{\epsilon} + y^{1 - \epsilon}). $$

Thus by lemma \ref{comparison lemma}, we know that $u_3 = u_3'$. 
\end{proof}

\paragraph{A remark on poly-homogeneous expansion}~\\

Based on Section 6 of \cite{He2019TheCondition}, $u_0$ has a poly-homogeneous expansion on $\hat{X}$, where $\hat{X}$ is a preferred way to compactify $X$ as a manifold with boundaries and corners whose definition can be found in either appendix \ref{Appendix: Compactification of $X$} or \cite{He2019TheCondition}. However, surprisingly, this is not the case for $u_3$. In fact, $u_3$ may have a poly-homogeneous expansion on each boundary away from the corners of $\hat{X}$, but they do not seem to compatible at the corner. Even in the simplest case: Suppose there is no knot singularity as $y \rightarrow 0$, but only Nahm pole sigular boundary condition with real symmetry condition. The solution is written explicitly as

$$u = \ln(\dfrac{e^y - e^{-y}}{2}). $$

This solution doesn't seem to have a poly-homogeneous expansion at the corner of $\hat{X}$ given by $R \rightarrow +\infty$ while $\psi \rightarrow 0$ at the same time, where $y = R \sin \psi$.

\section{The continuity method}\label{Section: The continuity method}

In this section, we use the continuity method to construct more solutions to the extended Bogomolny equations with generalized Nahm pole boundary condition and the real symmetry condition. This construction is almost identical with Dimakis' argument in \cite{Dimakis2022TheField}. By way of looking ahead, here is a brief sketch:\\

With the help of the solution $u_3$ constructed in the last section, for each triple $(P(z), Q(z), R(z))$, where $P, Q, R$ are polynomials with $\deg R < \deg Q$, $Q$ and $R$ are coprime, $P, Q$ are monic, we construct an approximate solution $(H_*, \varphi)$ that corresponds to it.  Then we improve the approximate solution near $y \rightarrow 0$. Finally, we use a continuity method to further improve it to get an actual solution.

\subsection{The approximate solution}\label{Subsection: The approximate solution}

Suppose a triple $(P, Q, R)$ is given. The approximate solution constructed in this section will be a pair $(H_*, \varphi)$ with

$$\varphi = \begin{pmatrix}
    0 & 0 \\ P(z) & 0
\end{pmatrix}.$$

We define $H_*$ separately in different regions. Recall that in general, we may write $H_*$ as

$$H_* = \begin{pmatrix}
    h + h^{-1}|w|^2 & h^{-1} \bar{w} \\ h^{-1}w & h^{-1}
\end{pmatrix}.$$

For an approximate solution, we only require that it behaves nicely near all boundaries/corners, but allow it to behave awfully in the middle area. We only need to define $H_*$ near each boundary and use any arbitrary smooth one to fill the inside.\\

Note that a preferred compactification of $X$ and preferred coordinates near boundaries/corners are used as always. See appendix \ref{Appendix: Compactification of $X$} for the definitions of all types of boundaries/corners and local coordinates.

\paragraph{When $\rho$ is large (near type I boundary)}~\\

Recall that when $\rho$ is large, $\rho^2 = |z|^2 + y^2$. In this region, $$H_* = \begin{pmatrix}
    h + h^{-1}|w|^2 & h^{-1} \bar{w} \\ h^{-1} w & h^{-1}
\end{pmatrix}, $$ where  $h = e^{u_3}$, $w = \chi(\dfrac{|z|}{y}) \cdot \dfrac{R(z)}{Q(z)}$, and $u_3$ is the function constructed in section \ref{Section: The model solution} which satisfies

$$\Delta u_3 + e^{-2u_3} |P(z)|^2 = 0 $$
and generalized Nahm pole bounary condition plus real symmetry breaking condition.\\

Since $\rho$ is large, we may assume that $\chi(\dfrac{|z|}{y}) = 0$ at all roots of $Q(z)$ and $w$ well-defined. Moreover, $V(H, \varphi)$ can be represented by (see special case 1 in subsection \ref{Subsection: The metric representation}):

$$V(H, \varphi) = 
\dfrac{1}{2} \begin{pmatrix}
    E & \bar{F} \\ F & -E
\end{pmatrix},$$
with
$$ E  = \Delta u_3 + e^{-2u_3}(4|\bar{\partial} w|^2 + |\partial_y w|^2 + |P|^2) = e^{-2u_3}(4|\bar{\partial} w|^2 + |\partial_y w|^2),$$   $$F =
     e^{-u_3}(\Delta w - 2(\partial_y u_3)(\partial_y w) - 8 (\bar{\partial}w) (\partial u_3)). $$

Note that $\Delta w, \partial_y w$ and $\bar{\partial} w$ are only non-zero when $1 < \dfrac{|z|}{y} < 2$. So $E=F=0$ except in the region $1 < \dfrac{|z|}{y} < 2$. In this region, since we have assumed $\rho$ is large, $e^{-u_3} = O(e^{-y}) = O(e^{-\rho}),$ $| w| = O(\rho^{- \deg Q + \deg R})$, $|\nabla w| = O(\rho^{- \deg Q + \deg R} - 1)$, $|\nabla u_3| = O(1)$ and $|\Delta w| = O(\rho^{-2})$. So in fact, the first bullet of the extended Bogomolny equtauions

$$ |V(H, \varphi)| = O(e^{-\rho}),$$
as $\rho \rightarrow +\infty$, uniformly in $\dfrac{|z|}{y}$. 

\paragraph{When $r$ is small (near type III boundary)}~\\

We construction $H_*$ on a region such that $y$ is small and $|z|$ is bounded above. This region contains all the points with small $r$.\\

We have to work in a different basis. (That is to say, a different choice of $s_1, s_2$ in the definition of $(H, \varphi)$, see subsection \ref{Subsection: The metric representation}.)\\ 

Since $Q$ and $R$ are coprime, we assume $QS + TR = 1$, where $S, T$ are also polynomials. Consider the holomorphic $SL(2, \IC)$ gauge transformation $u = \begin{pmatrix}
    Q & T \\ -R & S
\end{pmatrix}$. (Recall, it is not an actual gauge transformation on the configuration, see subsection \ref{Subsection: The metric representation}.) It sends $\varphi$ to

$$u^{-1}\varphi u = \begin{pmatrix}
    S & -T \\ R & Q
\end{pmatrix} \begin{pmatrix}
    0 & 0 \\ P & 0
\end{pmatrix} \begin{pmatrix}
    Q & T \\ -R & S
\end{pmatrix} = \begin{pmatrix}
    -PQT & -PT^2 \\ PQ^2 & PQT
\end{pmatrix}.$$

In this basis, we choose 

$$u^*Hu = \begin{pmatrix}
    e^{u_3'} & 0 \\ 0 & e^{-u_3'}
\end{pmatrix}, $$
where $u_3'$ is the version of $u_3$ but using $PQ^2$ instead of $P$. That is to say, $u_3'$ satisfies the following equation:

$$\Delta u_3' + e^{-2u_3'}|P(z)Q(z)^2|^2 = 0. $$

Using the new pair $(H_u, \varphi_u) = (u^*Hu, u^{-1}\varphi u)$, its preferred configuration $\Psi_{H_u, \varphi_u}$ is $SU(2)$ gauge equivalent to $\Psi_{H, \varphi}$. (Recall the definition of $\Psi_{H, \varphi}$ is in subsection \ref{Subsection: The metric representation}.) This is in the special case 2 there. So the first bullet of the  extended Bogomolny equations is

$$V(H_u, \varphi_u) = \dfrac{1}{2} \begin{pmatrix}
    E & \bar{F}\\
    F & -E
\end{pmatrix} $$
with
$$E = \Delta u_3' + e^{-2u_3'}|PQ^2|^2 
 + e^{2u_3'}|PT^2|^2, ~~ F = 2e^{u_3'}(PT^2\bar{P}\bar{Q}\bar{T}) + e^{-u_3'} |PQ|^2 (T\bar{Q} + \bar{T}Q).$$

 When $|z|$ is bounded, all the polynomials $P, Q, R, T, S$ are bounded. Note that when $y$ is small and $|z|$ is bounded, $e^{u_3'} = O(1)$ and $e^{-u_3'}|PQ|^2 = O(\dfrac{1}{y})$. Thus

$$E = e^{2u_3'}|PT^2|^2 = O(1), ~~ F = O(\dfrac{1}{y}),$$
uniformly in $z$ when $|z|$ is bounded.\\

Going back to the original basis, $H$ is written as

$$H_* = (g^{-1})^* \begin{pmatrix}
    e^{u_3'} & 0 \\ 0 & e^{-u_3'}
\end{pmatrix} g^{-1} = \begin{pmatrix}
    \bar{S} & \bar{R} \\ -\bar{T} & \bar{Q}
\end{pmatrix} \begin{pmatrix}
    e^{u_3'} & 0 \\ 0 & e^{-u_3'}
\end{pmatrix} \begin{pmatrix}
    S & -T \\ R & Q
\end{pmatrix} $$
$$ = \begin{pmatrix}
    |S|^2 e^{u_3'} + e^{-u_3'} |R|^2 & - T \bar{S} e^{u_3'} + \bar{R}Q e^{-u_3'} \\
    - \bar{T} S e^{u_3'} + R \bar{Q}e^{-u_3'} & |T|^2 e^{u_3'} + |Q|^2 e^{-u_3'}
\end{pmatrix}. $$

We still have $|V(H_*, \varphi)| = O(\dfrac{1}{y})$ since its norm doesn't change under $SU(2)$ gauge transformations. This is what we want.

\paragraph{When $\psi$ is small (near type II boundary)}~\\

We assume $\psi$ is small. When $|z|$ is large at the same time, recall that $H_*$ is already defined  by $h = e^{u_3}$, $w = \dfrac{R(z)}{Q(z)}$. We call it 

$$H_1 = \begin{pmatrix}
    e^{u_3} + e^{-u_3}|R(z)|^2 |Q(z)|^{-2} & e^{-u_3} \bar{R}(z) \bar{Q}(z)^{-1} \\ e^{-u_3} R(z)Q(z)^{-1} & e^{-u_3}
\end{pmatrix}. $$

On the other hand, when  $|z|$ is small at the same time, $H_*$ is also defined to be

$$H_2 =  \begin{pmatrix}
    |S|^2 e^{u_3'} + e^{-u_3'} |R|^2 & - T \bar{S} e^{u_3'} + \bar{R}Q e^{-u_3'} \\
    - \bar{T} S e^{u_3'} + R \bar{Q}e^{-u_3'} & |T|^2 e^{u_3'} + |Q|^2 e^{-u_3'}
\end{pmatrix} = \begin{pmatrix}
    e^{\tilde{u}} + e^{-\tilde{u}}|\tilde{w}|^2 & e^{-\tilde{u}} \bar{\tilde{w}} \\ e^{-\tilde{u}} \tilde{w} & e^{- \tilde{u}}
\end{pmatrix}. $$

When $|z|$ is neither too large nor too small and $\psi$ (or $y$) is small, we have

\begin{itemize}
    \item $|S|, |T|$ are all bounded.
    \item $e^{u_3'} = O(y)$ (or $O(\psi)$) and $e^{u_3} = O(y)$ (or $O(\psi)$).
    \item As $y \rightarrow 0$, and when $|z|$ is not small (to stay away from zeros of $P$ and $Q$), $u_3 = - \ln y - \ln|P| + O(y)$,  while $u_3' = - \ln y - \ln (|PQ^2|) + O(y)$. So
    $$u_3' - u_3 = - 2 \ln |Q| + O(y).$$
\end{itemize}

All these properties imply that in this region, as $y \rightarrow 0$

$$\tilde{u} = u_3 + O(y), ~~\tilde{w} = \dfrac{R(z)}{Q(z)} +  O(y).$$

So using a smooth cut-off function in $z$ to connect $u_3$ and $\tilde{u}$, we get an $H_*$ defined on the entire region where $\psi$ is small such that $H_* = H_1$ when   with $|z|$ is large and $H_* = H_2$ when $|z|$ is small. The error in the middle is
$$V(H_*, \varphi) - V(H_1, \varphi)  = O(y^{-1}), ~~~ V(H_*, \varphi) - V(H_2, \varphi) = O(y^{-1}). $$

when $|z|$ is neither too small nor too big.\\

So $V(H_*, \varphi) = O(y^{-1})$ uniformaly in $z$ and $V(H_1, \varphi)= 0$ when $|z|$ is large.

\paragraph{To sum up}~\\

We have defined an $H_*$ near each boundary/corner with
$$\varphi = \begin{pmatrix}
    0 & 0 \\ P(z) & 0
\end{pmatrix}.$$

We can define $H_*$ in the middle area smoothly and arbitrarily. The properties of the pair $(H_*, \varphi)$ are:

\begin{itemize} 
    \item $H_*$ is smooth on $X$.
    \item The configuration that corresponds to it satisfies the generalized Nahm pole boundary condition as $y \rightarrow 0$ (but of the version of $PQ^2$, not $P$) and real symmetry breaking condition as $y \rightarrow +\infty$.
    \item When $y \rightarrow 0$, $V(H_*, \varphi) = O(\dfrac{1}{y})$ uniformly in $z$. 
    \item When $R \rightarrow +\infty$, $V(H_*, \varphi) = O(e^{-R})$ uniformly in $\dfrac{t}{|z|}$.
    \item When $|z|$ is large but $y$ is small, or when $y$ is large but $|z|$ is small, $V(H_*, \varphi) = 0$.
\end{itemize}

\paragraph{An estimate on $\Psi_{H_*, \varphi}$}~\\

For later analysis, we will need an estimate on the point-wise norm $|\Psi_{H_*, \varphi}|$. Here the norm is the sum of the norms on all its six components $A_1, A_2, A_y, \Phi_1, \Phi_2, \Phi_3$.\\

Recall that if $(H_*, \varphi)$ is written in the format 

$$H = \begin{pmatrix}
    h + h^{-1}|w|^2 & h^{-1} \bar{w} \\ h^{-1} w & h^{-1}
\end{pmatrix}, ~~ \varphi = \begin{pmatrix}
    0 & 0 \\ P(z) & 0
\end{pmatrix}, $$
then $\Psi_{H_*, \varphi}$ is written as
\begin{equation*} 
\left\{
\begin{array}{lr}
\Phi = \Phi_1 - i \Phi_2 = \begin{pmatrix}
   0 & 0 \\ h^{-1}P & 0
\end{pmatrix},&\\
\Phi_3 = \dfrac{i}{2h} \begin{pmatrix}
-\partial_y h & -\partial_y \bar{w} \\
-\partial_y w &  \partial_y h
\end{pmatrix}, &\\
A_y = \dfrac{1}{2h}\begin{pmatrix}
0 & \partial_y \bar{w} \\
- \partial_y w & 0
\end{pmatrix} , &\\
A_1 = \dfrac{1}{2h}\begin{pmatrix}
- i \partial_2h &  2\partial \bar{w}\\
- 2\bar{\partial} w & i \partial_2 h
\end{pmatrix},&\\
A_2 = \dfrac{i}{2h} \begin{pmatrix}
 \partial_1 h &   2\partial \bar{w} \\
 2\bar{\partial}w & - \partial_1 h
\end{pmatrix} .
\end{array}
\right.
\end{equation*} 

Examining the construction of $H$, it is clear that when $y \geq 1$, all these terms are point-wise bounded above by a constant which doesn't depend on $y$ or $z$.\\

When $y \rightarrow 0$, one also examines that $|\Psi_{H_*, \varphi}| = O(\dfrac{1}{y})$ uniformly in $z$. \\

So overall, there exists a constant $C > 0$ such that
$$ |\Psi_{H_*, \varphi}| \leq C (\dfrac{1}{y} + 1)$$
on the entire $X$.

\paragraph{A remark on geometrical meanings of $(P, Q, R)$ and boundary conditions}~\\

 We briefly explain the geometrical meanings of $(P, Q, R)$. There is no difference in this regard between our situation and the situation described by Dimakis in \cite{Dimakis2022TheField} without real symmetry breaking .\\

Note that zeros of $PQ^2$ are the ``knotted points" for the generalized Nahm pole boundary condition with degrees.  And zeros of $P$ are the zeros and vanishing orders of $\varphi$. Moreover, under a certain $SL(2, \IC)$ gauge, there is a ``small section" described by $\begin{pmatrix}
    R(z) \\Q(z)
\end{pmatrix}$. Readers may see \cite{Dimakis2022TheField} for more details. These data are irrelevant with the $SU(2)$ gauge transformations. So in particular, different choices of $(P, Q, R)$ do not give $SU(2)$ equivalent solutions.\\

\subsection{Improve the approximate solution when $y$ is small}\label{Subsection: Improve the approximate solution when $y$ is small}

Suppose $\Psi_* = \Psi_{H_*, \varphi}$ is the approximate solution in the last subsection. We modify ${\Psi}_*$ when $y$ is small and $|z|$ is bounded to make $V(\Psi_*) = O(y^n)$ for any positive integer $n$ as $y \rightarrow 0$. Without leading to an ambiguity, we may still use ${\Psi}_*$ to denote the modified version after this subsection in this paper.\\

The idea of the modification is to inductively construct a smooth section $s_k$ of $isu(2)$, $ k\geq 0$. Suppose $e^{s_0 + s_1 + \cdots + s_k}$ acts on $\Psi_{*}$ and gets $\Psi_k$. (Note that the summation is in the sense described in subsection \ref{Subsection: The deformation of a configuration} written in a sloppy way.) Then ${\Psi}_k $ has the property
$$V(\Psi_k) = O(y^{k 
- \epsilon}), ~~~ s_k = O(y^{k+1 - \epsilon}),$$
when $y \rightarrow 0$, with $\epsilon$ arbitrarily small. We assume all the derivatives of $s_k$ also have the correct vanishing order as $y \rightarrow 0$. We further assume each $s_k$ is supported in a region where $y$ is small and $|z|$ is bounded. So it doesn't ruin other good properties that $\Psi_*$ has as listed in \ref{Subsection: The approximate solution}. In particular, we assume originally $V(\Psi_*)$ = 0 when $y \leq \dfrac{1}{M}$ and $|z| \geq M$ for some large constant $M > 0$. Then for each $\Psi_k$, we inductively assume $V(\Psi_k) = 0$ when $y \leq \dfrac{1}{M}$ and $|z| \geq (2 - 2^{-(k+1)})M$.\\

Here is the construction:\\

Suppose $\Psi_{k-1}$ is constructed. (When $k = 0$, we assume $\Psi_{k-1} = \Psi_*$.) When $y$ is small, for any arbitrarily small $\epsilon > 0$,

$$V(\Psi_{k-1}) = O(y^{k-1 - \epsilon}), $$
and $V(\Psi_{k-1}) = 0$ when $y \leq \dfrac{1}{M}$  and $|z| \geq (2 - 2^{-k})M$ at the same time. \\

We use $L_{k-1}$ to denote the operator $L_{\Psi_{k-1}}$ for simplicity. We haven't studied the mapping properties of $L_{k-1}$ yet, which is in fact awful. But luckily since we don't care about the region when either $y$ or $|z|$ is large, we may modify $L_{k-1}$ to remain unchanged when both $y$ and $|z|$ are small, but equals $L_{\Psi_{k-1}'}$ when either $y$ or $|z|$ is large, where $\Psi_{k-1}'$ is a configuration with the same version of generalized Nahm pole boundary condition and without real symmetry breaking at $y \rightarrow +\infty$. When $y$ is small and $|z|$ is bounded, as an elliptic operator with ``edge" type (strictly speaking, $y^2 L_{\Psi_{k-1}'}$ is the elliptic operator of ``edge" type locally), only non-leading terms are modified. This operator $L_{\Psi_{k-1}'}$ is already well studied in \cite{Mazzeo2017TheKnots} and \cite{He2020TheII}. And it equals $L_{k-1}$ when $y$ is small and $|z|$ is bounded (whose bound is larger than $2M$, say when $|z| \leq 100M$).\\

Suppose $\chi_1(y)$ is a smooth cut-off function in $y$ which is $1$ when $y$ is small and $0$ when $y$ is large. Suppose $\chi_2(z)$ is a smooth cut-off function in $z$ which is $1$ when $|z| \leq 100M$ and $0$ when $|z|$ is large.   By the same reason as the proof of proposition 7.1 in \cite{He2020TheII} or the arguments in paragraph 4.3 in \cite{Dimakis2022TheField}, we have a smooth section $\tilde{s}_k$ such that when $y \rightarrow 0$, in this region,

$$L_{\Psi_{k-1}'}(\tilde{s}_k) = -\chi_1(y)\chi_2(z)V(\Psi_{k-1}), ~~~ \tilde{s}_k = O(y^{k+1 - \epsilon}). $$

All the derivatives of $\tilde{s}_k$ also have the appropriate decay rates at $y \rightarrow 0$. Note that $\tilde{s}_k$ may behave bad outside the region mentioned above.\\

In fact, the operator $L_{\Psi_{k-1}'}$ is locally invertible in the region $y$ is small and $|z| \leq 100M$ above the lower Fredhohm weight. To be more precise, if $V$ is supported in this region and $V \in y^{\delta}C^{k, \alpha}_{\text{ie}}$ when $\delta > -2$, then there is an $\tilde{s}_k \in y^{\delta + 2 - \epsilon}C^{k, \alpha}_{\text{ie}}$ such that $L_{\Psi_{k-1}'} \tilde{s}_k = V$ in this region. The definition of $C^{k, \alpha}_{\text{ie}}$ can be found in \cite{He2020TheII}, \cite{Dimakis2022TheField} or appendix \ref{Appendix: Weighted Holder spaces of (iterated) edge type}. The fact that the Fredholm weight is above $-2$ can be found in \cite{Mazzeo2017TheKnots}, \cite{He2020TheII} or \cite{Dimakis2022TheField}. (Strictly speaking, near a corner both $\psi$ and $r$ goes to $0$, we should use $\psi^{\delta}r^{\delta} C^{k, \alpha}_{\text{ie}}$. However even there, $\psi r$ is equivalent with $y$.) Since we only require the equation holds in the local region mentioned above, this is a local property and there is no need to define the space in other parts of $X$, say, when $R \rightarrow +\infty$. This also means, $\tilde{s}_k$ is allowed to behave bad outside the region that we are interested in. Note that the small constant $\epsilon$ exists because of the fact that $\tilde{s}_k$ may have additional terms of $O(y^{\delta + 2}\ln y)$ which make it not reside exactly in  $y^{\delta + 2}C^{k, \alpha}_{\text{ie}}$.\\

We may take a third cut-off function $\chi_3(z)$ in $z$ which is $1$ when $|z| \leq (2 - 2^{-k}) M$ and $0$ when $|z| \geq (2 - 2^{-(k+1)})M$. Recall the fact that $V(\Psi_{k-1}) = 0$ when $|z| \geq (2 - 2^{-k}) M$ and $y$ is small. This implies that $L_{\Psi_{k-1}'}(\tilde{s}_k) = 0$ in the same region.\\

Thus let $s_k = \chi_3(z) \chi_1(y) \tilde{s}_k$. We have $s_k$ is supported in the region $|z| \leq (2 - 2^{k+1})M$ and $y$ is small. And  in particular, when $|z| \leq (2 - 2^{k+1})M$ and $y \leq \dfrac{1}{M}$ (We assume $\chi_1(y) = 1$ in this region),

$$L_{k-1}(s_k) = L_{\Psi_{k-1}'} (s_k) = L_{\Psi_{k-1}'}(\chi_3(z)) \tilde{s}_k. $$

So let $\Psi_{k}$ be the configuration that we get after applying $e^{e_k}$ on $\Psi_{k-1}$. According to the first fact in subsection \ref{Subsection: The deformation of a configuration},

$$V(\Psi_k) = V(\Psi_{k-1}) + L_{k-1}(s_k) + Q(s_k). $$

When $|z| \leq (2 - 2^k)M$ and $y \leq \dfrac{1}{M}$, 

$$V(\Psi_k) = V(\Psi_{k-1}) + L_{k-1}(s_k) + Q(s_k) = Q(s_k) = O(y^{k-\epsilon}). $$

When $(2 - 2^k)M \leq |z| \leq (2 - 2^{k+1})M$ and $y \leq \dfrac{1}{M}$, $$V(\Psi_{k-1}) = L_{k-1}(\tilde{s}_k) = 0$$ and
$$L_{k-1}(\chi_3(z)\tilde{s}_k)  \lesssim |\nabla^2 \chi_3|  |\tilde{s}_k| + |\nabla \chi_3| |\nabla \tilde{s}_k| =  O(y^{k-\epsilon}). $$
$$V(\Psi_k) = V(\Psi_{k-1}) + L_{k-1}(s_k) + Q(s_k) = L_{k-1}(\chi_3(z)\tilde{s}_k) + Q(s_k) = O(y^{k-\epsilon}). $$
Finally, when $|z| \geq (2 - 2^{k+1})M$ and $y \leq \dfrac{1}{M}$, we have $s_k = 0$ thus $\Psi_k = \Psi_{k-1}$. So the inductive construction is finished.\\

We take summation in a convergent way of $\{s_k\}$ (and in the sense described in subsection \ref{Subsection: The deformation of a configuration}) and get a new configuration, still denoted as $\Psi_*$, with the additional property besides what are listed in subsection \ref{Subsection: The approximate solution}: 

$$V(\Psi_*) = O(y^n), $$
for any positive integer $n$ when $y \rightarrow 0$ uniformly in $z$. Here ``in a convergent way" means we may freely multiply a cut-off function in $y$ supported and equals to $1$ near $y = 0$ (but becomes $0$ quickly when $y$ gets away from $0$) onto each $s_k$ (whose support depends on $s_k$) to make the infinite sum of $s_k$ converge. In the remaining of this paper, we will use this new $\Psi_*$ instead of the old one.

\subsection{The continuity argument}\label{Subsection: The continuity argument}

Let ${\Psi}_{*}$ be the approximate solution from the last subsection.  Let $s_{*} = V(\Psi_*)$.  Clearly $s_* \in isu(2)$. Consider $e^{\frac{s_*}{2}}$ acting on $\Psi_{*}$ and call it $\Psi_0$. Note that $s_*$ vanishes up to any finite order on any boundaries. (We'll need this property to make sure that $\Psi_0$ still satisfies the same generalized Nahm pole condition and real symmetry condition as $\Psi_*$. ) Then $e^{-\frac{s_*}{2}}$ acting on $\Psi_0$ gets back to $\Psi_{*}$. We have
$$V(\Psi_0, -s_*) = s_*, $$
where the notation $V(\Psi_0, -s_*)$ is defined in subsection \ref{Subsection: The deformation of a configuration}.\\

On the other hand, by the first fact described in subsection \ref{Subsection: The deformation of a configuration}
$$s_*  = V(\Psi_0) - L_{\Psi_0}(s_*) + Q(-s_*). $$

Thus $V(\Psi_0) = 0 $ whenever $s_* = 0$. And $V(\Psi_0) = O(y^n)$ for any $n$ when $y \rightarrow 0$ just like $s_*$. And it decays exponentially when $R \rightarrow 0$ just like $s_*$. That is to say, for some small constant $\epsilon > 0$ ,$V(\Psi_0) = O(e^{- \epsilon R})$ as $R \rightarrow +\infty$.\\

The following equation
$$V(\Psi_0, s) + ts = 0 $$
has an obvious solution $s = -s_*$ when $t = 1$. In general, suppose $s$ is a section of $isu(2)$. Let $S\subseteq [0, 1]$ be the set of $t$ values such that
$$V(\Psi_0, s) +ts = 0 $$
has a solution with $s \in \mathcal{X}^{k}_{1/2 - \epsilon, 1/2 - \epsilon, -1 + \epsilon}$ for some small $\epsilon > 0$ and any non-negative integer $k$, where the definition of $\mathcal{X}^{k}_{1/2 - \epsilon, 1/2 - \epsilon, -1 + \epsilon}$ can be found in appendix \ref{Appendix: Weighted Holder spaces of (iterated) edge type}. Then $S$ is non-empty. (Because $1 \in S$.)\\

By way of looking ahead, we prove that $S = [0, 1]$ by showing it is both relatively open and relatively closed. In particular, when $t = 0$, it gives an actual solution to the extended Bogomolny equations. In fact, we have the following five facts:

\begin{itemize}
    \item The initial $s_1 = -s_*$ is in $\mathcal{X}^{k}_{\frac{1}{2}, \frac{1}{2}, -l}$ for any non-negative integers $k, l$. This is straightforward from the construction.
    \item Suppose at some $t = t_{\diamond} > 0$, there is a solution $s_{t_{\diamond}} \in \mathcal{X}^{k}_{\frac{1}{2}, \frac{1}{2}, -l}$ for all $k, l$. Then for some small $\epsilon > 0$ depending on $t_{\diamond}$, there is also a solution in $\mathcal{X}^{k}_{\frac{1}{2}, \frac{1}{2}, -l}$ when $t_{\diamond} - \epsilon < t < t_{\diamond} + \epsilon$. This is proved in subsection \ref{Subsection: More regularities}.
    \item For each $0 < t_0 \leq 1$, suppose when $t_0 \leq t \leq 1$ there is a solution $s_t \in \mathcal{X}^{k}_{\frac{1}{2}, \frac{1}{2}, -l}$. Then there is a constant $C$ which only depends on $k, l, t_0$ and $\Psi_0$ such that
$$||s_t||_{\mathcal{X}^{k}_{\frac{1}{2}, \frac{1}{2}, -l}} \leq C. $$
Note that the constant $C$ doesn't depend on $s_t$ or $t$. This is proved in subsection \ref{Subsection: A priori estimates}.
\item Suppose $t > 0$. If $s_t$ is a solution in $\mathcal{X}^k_{\frac{1}{2}- \epsilon, \frac{1}{2}-\epsilon, -l}$ for all $k$ and $l$ and some small $\epsilon > 0$. Then it is also in $\mathcal{X}^k_{\frac{1}{2}, \frac{1}{2}, -l}$ for all $k$ and $l$. This is proved in subsection \ref{Subsection: More regularities}.
\item  There is a constant $C$ which depends only on $k$ and $\Psi_0$, such that for any $0 \leq t \leq 1$, if there is a solution $s_t \in \mathcal{X}^k_{\frac{1}{2}, \frac{1}{2}, -1}$. Then
$$||s_t||_{\mathcal{X}^k_{\frac{1}{2}, \frac{1}{2}, -1}} \leq C.$$
Note that $C$ doesn't depend on $s_t$ or $t$. This is proved in subsection \ref{Subsection: A priori estimates}.
\end{itemize}

\paragraph{Proof that $S = [0, 1]$ using the above facts}~\\

Let $S'$ be the subset of $S$ such that additionally $s_t \in \mathcal{X}^k_{\frac{1}{2}, \frac{1}{2}, -l}$ for all $k, l$. Then $1 \in S'$.\\

The second bullet indicates that $S'$ is relatively open in $[0, 1]$. If there is a sequence $\{t_n\}$ in $S'$ converging to $ t_0 > 0$. Then by the third bullet and by choosing a sub-sequence, we may assume $s_{t_n}$ converges to an element in $\mathcal{X}^k_{\frac{1}{2}-\epsilon, \frac{1}{2}-\epsilon, -l}$ for all $k, l$. We don't need to add an $\epsilon$ on $-l$ since it is true for all $l$. Then by the fourth bullet the limit $s_{t_0}$ actually lies in $\mathcal{X}^k_{\frac{1}{2}, \frac{1}{2}, -l}$. This limit solves the $t_0$ version of the equation. So $S'\cap (0, 1]$ is a relatively closed subset of $(0, 1]$. Thus $(0, 1] \subset S'$. In particular, $(0, 1] \subset S$.\\

Finally, choose a sequence $t_n \rightarrow 0$. According the the fifth bullet, by choosing a sub-sequence, we may assume $s_{t_n}$ converges to some $s_0 \in \mathcal{X}^k_{\frac{1}{2} -\epsilon, \frac{1}{2}- \epsilon, -1 + \epsilon}$. This limit $s_0$ solves the $t = 0$ version of the equation. Thus $0 \in S$.

\subsection{A priori estimates}\label{Subsection: A priori estimates}

This subsection derives some a priori estimates, which prove the third and the fifth bullets of the facts listed in the end of subsection \ref{Subsection: The continuity argument}. These estimates are largely due to Dimakis' estimates in \cite{Dimakis2022TheField}. A large portion of them are also nearly identical to Mazzeo and He's work in \cite{He2019TheCondition} and Jacob and Walpuski's work in \cite{Jacob2019HermitianManifolds}. Moreover, the local $C^{0, \alpha}$ estimate may originate in Bando and Siu's paper \cite{BANDO1994STABLEMETRICS} and the local $C^{k, \alpha}$ estimate may originate in Hildebrandt's work in \cite{Hildebrandt1985HarmonicManifolds}. The author is not attempting to trace all the origins here, but wants to give the readers the impression that these estimates are all standard in analysis in some sense.

\paragraph{Local $L^{\infty}$ estimate}~\\

Suppose we have a solution $$V(\Psi_0, s_t) + ts_{t} = 0.$$
 Then according to the second fact (WeitzenBock formula) described in subsection \ref{Subsection: The deformation of a configuration}, we have
$$- t|s_t|^2 = <V(\Psi_0, s_t), s_t>  =  <V(\Psi_0), s_t> - \dfrac{1}{2}\Delta (|s_t|^2) + \sum\limits_{i=1}^3|v(-s)D_i^*s|^2. $$

\begin{lemma} (Dimakis' estimate in \cite{Dimakis2022TheField})\label{lemma of C infinity estimate 1}
    For each $k \geq 1$, there exists a constant $C_1$ which only depends on $V(\Psi)$, $k$ and $t_0 > 0$, such that for each $t \in (t_0, 1]$, if $s_t \in \mathcal{X}^k_{\frac{1}{2}, \frac{1}{2}, -l}$ for all $k, l$. Then
    $$\sup|\rho^k s_t| \leq C_1. $$
    Moreover, when $k = 1$, if we only assume $t \in [0, 1]$ and $s_t \in \mathcal{X}^k_{\frac{1}{2}, \frac{1}{2}, -1}$ for all $k$, then there is a constant $C$ which doesn't depend on $t$ such that
    $$\sup|\rho s_t| \leq C. $$
Note that the definition of $\rho$ can be found in appendix \ref{Appendix: Compactification of $X$}.
\end{lemma}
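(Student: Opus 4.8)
The plan is to combine the Weitzenböck identity recorded in Section~\ref{Subsection: The deformation of a configuration} with a local mean-value / De Giorgi--Nash--Moser argument applied on balls of size comparable to $\rho$. Starting from
$$-t|s_t|^2 = \langle V(\Psi_0), s_t\rangle - \tfrac12\Delta(|s_t|^2) + \sum_{i=1}^3 |v(-s_t)D_i^*s_t|^2,$$
and discarding the nonnegative gradient-type term, I would obtain a differential inequality of the form $\Delta(|s_t|^2) \leq 2\langle V(\Psi_0), s_t\rangle + 2t|s_t|^2 \le C(|V(\Psi_0)|\,|s_t| + |s_t|^2)$, i.e. $\Delta(|s_t|^2) \ge -C(\dots)$ with the convenient sign on the dominant Laplacian term: precisely, $\tfrac12\Delta(|s_t|^2) \ge t|s_t|^2 + \langle V(\Psi_0),s_t\rangle \ge -|V(\Psi_0)||s_t|$. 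So $|s_t|^2$ is an (approximate) subsolution up to the forcing $|V(\Psi_0)|$, and I can invoke the local maximum principle (e.g. Theorem~8.17 in \cite{Gilbarg2001EllipticOrder}) on a ball $B_{\rho/2}(p)$ to bound $\sup_{B_{\rho/4}(p)}|s_t|^2$ by a scale-invariant combination of $\|s_t\|_{L^2(B_{\rho/2})}$-type quantities and $\|V(\Psi_0)\|$ on that ball.

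The key is then to control the relevant norms on each such ball uniformly. For the $k=1$ statement with $t\in[0,1]$: since $V(\Psi_0) = O(e^{-\epsilon R})$ as $R\to\infty$ and $V(\Psi_0)=O(y^n)$ for all $n$ as $y\to 0$, while $s_t\in\mathcal{X}^k_{1/2,1/2,-1}$ already guarantees $|s_t|=O(\rho^{-1})$ pointwise (up to a constant a priori depending on $t$), the structural input is that the scaling weight makes $\rho|s_t|$ the natural invariant quantity. Rescaling the ball $B_{\rho/4}(p)$ to unit size, the metric becomes uniformly comparable to Euclidean, the coefficient $|V(\Psi_0)|$ contributes at worst $\rho^{-2}\cdot(\text{bounded})$ after rescaling (using the decay of $V(\Psi_0)$ relative to $\rho$), and the local maximum principle yields $\rho(p)|s_t(p)| \le C(\,\fint_{B_{\rho/2}} \rho^2|s_t|^2\,)^{1/2} + C$. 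The integral $L^2$-bound is obtained from integrating the Weitzenböck identity against a cutoff: $\int \chi^2(t|s_t|^2 + |v(-s_t)D_i^*s_t|^2) = -\int\chi^2\langle V(\Psi_0),s_t\rangle + \tfrac12\int\Delta(\chi^2)|s_t|^2$, which after absorbing gives a Caccioppoli-type estimate $\int_{B_{\rho/4}}\rho^2|s_t|^2 \le C$ using that $V(\Psi_0)\in \rho^{-1}L^2_{\mathrm{loc}}$ with summable tails. For the $k\ge 2$, $t\in(t_0,1]$ case one then bootstraps: once $\sup|\rho s_t|\le C$, plug this back into the equation $L_{\Psi_0}s_t + (\text{lower order}) + t s_t = -V(\Psi_0)$ and use the already-established $\mathcal{X}^k_{1/2,1/2,-l}$ membership together with interior elliptic (Schauder) estimates scaled to balls of radius $\rho/2$, improving the decay exponent by one each time (the $t\ge t_0>0$ ensures the zeroth-order term helps rather than hurts, which is why $k\ge 2$ needs $t_0>0$ whereas $k=1$ does not).

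The main obstacle I expect is the uniformity of the constants in $t$ and $s_t$ near the two singular ends. Near $y\to 0$ the operator degenerates (it is of edge type, $y^2 L$ being the honest elliptic operator), so the naive rescaling is not uniform; I would handle this by working with the iterated-edge Hölder spaces $C^{k,\alpha}_{\mathrm{ie}}$ from \cite{He2020TheII}/\cite{Dimakis2022TheField} and using that $V(\Psi_0)=O(y^n)$ for all $n$ beats any finite negative edge weight, so the contribution from that end is harmless and the constant is governed purely by $\Psi_0$. Near $R\to\infty$ the real-symmetry-breaking asymptotics mean $\Phi_3\to\sigma$ with $|\sigma|=1$, so $\Delta_{\Psi_0}$ acquires a positive mass term $[\sigma,[\sigma,\cdot]]$ on the off-diagonal part of $s_t$; this actually \emph{helps} (it forces exponential decay of that component) but must be tracked to justify that the $L^2$ tails are summable and that the constant in $\sup|\rho s_t|\le C$ does not blow up as one moves out to infinity. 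The delicate bookkeeping is making the "local maximum principle on a $\rho$-ball, rescale, sum over a locally finite cover" argument genuinely uniform — this is where I would lean most heavily on Dimakis' corresponding estimate in \cite{Dimakis2022TheField}, adapting only the input asymptotics of $\Psi_0$.
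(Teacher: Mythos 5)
There is a genuine gap at the heart of your argument: the claimed $L^2$ bound with a constant independent of $s_t$ and $t$. Integrating the Weitzenb\"ock identity against a cutoff $\chi$ adapted to $B_{\rho/2}(q)$ gives, schematically,
\begin{equation*}
\int \chi^2\Bigl(t|s_t|^2+\sum_{i=1}^3|v(-2s_t)D_i^*s_t|^2\Bigr)\;\leq\;\int \chi^2|V(\Psi_0)||s_t|\;+\;\frac{1}{2}\int \Delta(\chi^2)\,|s_t|^2 .
\end{equation*}
For $t=0$ the term $\int\chi^2|s_t|^2$ is simply absent from the left-hand side, so no $L^2$ bound on $s_t$ can be extracted at all; for $t\geq t_0>0$ the cutoff term $\frac{1}{2}\int\Delta(\chi^2)|s_t|^2\sim\rho^{-2}\int_{B_{\rho/2}\setminus B_{\rho/4}}|s_t|^2$ lives on a region \emph{larger} than the one you are estimating and is of the same order, so it cannot be ``absorbed''. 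The only available input is the a priori bound $|s_t|\leq C(s_t)\rho^{-1}$ whose constant depends on $s_t$, and the local maximum principle then returns a bound with that same $s_t$-dependent constant: you gain nothing. Worse, for $t=0$ a purely local argument on balls $B_{\rho/2}(q)$ far from $y=0$ cannot produce decay in principle --- a bounded harmonic function on such a ball need not be small, and the rapid decay of $V(\Psi_0)$ only kills the forcing, not the homogeneous part. The $\rho^{-1}$ decay at $t=0$ is a \emph{global} effect of the Dirichlet condition at $y=0$.

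The paper's proof supplies exactly the two ingredients your sketch is missing. First, it uses the explicit half-space Green's function with its image charge, whose cancellation gives $G_{0,q}(p)\sim y(p)y(q)/|p-q|^3$ away from $q$; this is what produces the factor $\rho(q)^{-2}$ in $\int_{X\setminus B}G_{0,q}|V(\Psi_0)|$ when $t=0$ (and the exponential decay of $G_{2t,q}$ is what allows arbitrary powers $\rho^{-k}$ when $t\geq t_0>0$). Second, it exploits the quadratic/linear mismatch in the representation formula $|s_t|^2\leq\int 2G\,|V(\Psi_0)||s_t|$: feeding in $|s_t|\leq C_k(s_t)\rho^{-k}$ returns $|s_t|\leq\sqrt{C_1C_k(s_t)}\,\rho^{-k}$, and iterating $A\mapsto\sqrt{C_1A}$ finitely many times drives the constant down to one depending only on $\Psi_0$. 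This self-improvement iteration is what removes the $s_t$-dependence, and it is absent from your proposal; likewise your $k\geq 2$ bootstrap via rescaled Schauder estimates improves regularity, not decay. If you repair the $L^2$ step honestly, you will find yourself reconstructing the paper's global Green's function argument.
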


\begin{proof}
The constants $C_1$ and $C$ may change from line to line. Keep in mind that $C_1$ only depends on $\Psi_0, k$ and $t_0$, while $C$ only depends on $\Psi_0$.\\

Note that originally we have

$$|s_t| \leq \rho^{-k}
C_k(s_t),$$
where $C_k(s_t)$ depends on $s_t$.\\

Point-wise we have
$$- \Delta(|s_t|^2) + 2t|s_t|^2 ~ \leq ~ 2|V(\Psi_0)| \cdot |s_t|$$

Consider the Green's function for $-\Delta + 2t$ on $X$. This Green's function is

$$G_{2t, q}(p) = \dfrac{e^{-\sqrt{2t}|p - q|}}{4\pi|p - q|} - \dfrac{e^{-\sqrt{2t}|p - \bar{q}|}}{4\pi|p - \bar{q}|}, $$
where $\bar{q}$ is the reflection of $q$ about $y = 0$ plane.\\

First we assume $t \geq t_0 > 0$ and prove the first bullet. Choose a ball $B$ of radius $\dfrac{\rho}{2}$ centered at $q$. The maximal principle implies that

$$|s_t|^2 \leq \int_X 2 G_{2t, q}|V(\Psi_0)| |s_t| = \int_{B\cap X}2 G_{2t, q}|V(\Psi_0)| |s_t| + \int_{X \backslash B}2 G_{2t, q}|V(\Psi_0)| |s_t| $$
$$\leq 2^{k+1} C_k(s_t) (\int_{B\cap X} {\rho}^{-k} G_{2t, q}|V(\Psi_0)| + \int_{X \backslash B} G_{2t, q}|V(\Psi_0)|). $$

Note that $|V(\Psi_0)|$ has exponential decay as $\rho \rightarrow +\infty$ and $G_{2t, q}$ has exponential decay with respect to the distance to $q$ outside of the ball $B$. So 
$$(\int_{B \cap X} {\rho}^{-k} G_{2t, q}|V(\Psi_0)| + \int_{X \backslash B} G_{2t, q}|V(\Psi_0)|) \leq C_1y^{-k}. $$
This implies
$$|s_t| \leq \sqrt{C_1 C_k(s_t)}{\rho}^{-k}. $$
Repeating this finitely many times we get,
$$|s_t| \leq C_1{\rho}^{-k} $$

Next we assume $t \in [0, 1]$ and prove the second bullet.\\

We simply have

$$- \Delta(|s_t|^2)  ~ \leq ~ 2|V(\Psi_0)| \cdot |s_t|. $$

Originally we have
$$|s_t| \leq C(s_t) \rho^{-1} \leq C(s_t).$$

And the same maximal principle applies and we have

$$|s_t|^2 \leq \int_X 2 G_{0, q}|V(\Psi_0)| |s_t| \leq 2 C(s_t) \int_X |G_{0, q}| |V(\Psi_0)|$$ $$ = 2 C(s_t) (\int_{X\cap B} |G_{0, q}| |V(\Psi_0)| + \int_{X \backslash B}|G_{0, q}| |V(\Psi_0)|).$$

Note that  $\displaystyle \int_{X \backslash B} G_{0, q}|V(\Psi_0)|$ is bounded above by $C \rho^{-2}$ since $|V(\Psi_0)|$ has an exponential decay in $\rho$ when $\rho \rightarrow +\infty$.\\

On the other hand, note that

$$|G_{0, q}(p)| = \dfrac{1}{4\pi} \dfrac{4y(p)y(q)}{|p - q||p - \bar{q}| (|p - q| + |p - \bar{q}|)}, $$
where $y(p), y(q)$ means the $y$ coordinates of $p$ and $q$ respectively. On $X \backslash B$,
$$\int_{X \backslash B} G_{0, q}|V(\Psi_0)|\leq C \int_{X \backslash B} \dfrac{y(p)y(q)}{|p - q|^3} |V(\Psi_0)| dp \leq C\rho^{-2}\int_{X} y|V(\Psi_0)| \leq C\rho^{-2}. $$

So after all

$$|s_t|^2 \leq C C(s_t) \rho^{-2}. $$
Which means
$$|s_t| \leq \sqrt{C C(s_t)}  \rho^{-1}.$$
Repeating this finitely many times we get (remember that $C$ may change from line to line)
$$|s_t| \leq C \rho^{-1}. $$

\end{proof}

We also need a point-wise estimate of $|s_t|$ when $y \rightarrow 0$. We use $\hat{y}$ to represent $y$ when $y$ is small and $1$ when $y$ is not small.

\begin{lemma} (This is inspired by a similar Taubes' estimate in  \cite{Taubes2021TheR})\label{lemma of C infinity estimate 2}

There is a constant $C$ which only depends $V(\Psi_0)$, such that, 
$$|s_t| \leq C y^{\frac{1}{2}}. $$
\end{lemma}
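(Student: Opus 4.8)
The plan is to run a maximum-principle argument built on the half-space Dirichlet Green's function, in the same spirit as the proof of Lemma \ref{lemma of C infinity estimate 1}. First I would record the pointwise inequality that drives everything: starting from the solution $V(\Psi_0,s_t)+ts_t=0$ and the Weitzenb\"ock identity recalled in Subsection \ref{Subsection: The deformation of a configuration} (used exactly as in the proof of Lemma \ref{lemma of C infinity estimate 1}), one gets $-\Delta(|s_t|^2)+2t|s_t|^2\le 2|V(\Psi_0)|\,|s_t|$ on all of $X$, and since $t\ge 0$ this gives $-\Delta(|s_t|^2)\le 2|V(\Psi_0)|\,|s_t|$. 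I will only need to prove $|s_t|\le Cy^{1/2}$ in the region $y\le 1$, because for $y\ge 1$ it already follows from the bound $|s_t|\le C$ of Lemma \ref{lemma of C infinity estimate 1} with $C$ depending only on $\Psi_0$. Two further facts feed the argument: $|V(\Psi_0)|=O(y^N)$ near $y=0$ for every $N$ and $|V(\Psi_0)|=O(e^{-\epsilon\rho})$ as $\rho\to+\infty$ (from Subsections \ref{Subsection: Improve the approximate solution when $y$ is small} and \ref{Subsection: The continuity argument}); and, from the defining weights of the space in which $s_t$ lives, $|s_t|^2\to 0$ as $y\to 0$ and as $\rho\to+\infty$.

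Next I would introduce the Dirichlet Green's function of $-\Delta$ on the half-space $X$,
$$G_{0,q}(p)=\frac{1}{4\pi}\Big(\frac{1}{|p-q|}-\frac{1}{|p-\bar q|}\Big)=\frac{1}{\pi}\,\frac{y(p)\,y(q)}{|p-q|\,|p-\bar q|\,(|p-q|+|p-\bar q|)},$$
and set $\Phi(q)=\int_X G_{0,q}(p)\,2|V(\Psi_0)(p)|\,|s_t(p)|\,dp$; the integral converges because of the exponential decay of $V(\Psi_0)$ at infinity and the local integrability of $|p-q|^{-1}$ in dimension three. Then $\Phi\ge 0$, $-\Delta\Phi=2|V(\Psi_0)|\,|s_t|$ in $X$, and $\Phi$ vanishes on $\{y=0\}$ and at $\rho\to+\infty$. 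Consequently $|s_t|^2-\Phi$ is subharmonic, bounded, and tends to $0$ on $\{y=0\}$ and as $\rho\to+\infty$ (using the two facts above), so by the maximum principle on $X$ (justified by exhausting $X$ with half-balls) one gets $|s_t|^2\le\Phi$ pointwise.

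It then remains to show $\Phi(q)\le C\,y(q)$ for $y(q)\le 1$ with $C$ depending only on $\Psi_0$. Using $|s_t|\le C$ one has $\Phi(q)\le C\int_X G_{0,q}(p)|V(\Psi_0)(p)|\,dp$, and I would split at $|p-q|=y(q)$. On $\{|p-q|\ge y(q)\}$, use $G_{0,q}(p)\le \tfrac{1}{\pi}y(p)y(q)|p-q|^{-3}$ together with $y(p)\le y(q)+|p-q|\le 2|p-q|$, so the contribution is at most $C\,y(q)\int_X |p-q|^{-2}|V(\Psi_0)(p)|\,dp\le C\,y(q)$, the last integral being finite by integrability of $|p-q|^{-2}$ near $q$ (dimension three) and exponential decay of $V(\Psi_0)$. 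On $\{|p-q|<y(q)\}$ with $y(q)\le 1$, one has $y(p)<2y(q)\le 2$ and hence $|V(\Psi_0)(p)|\le C\,y(p)^{N}\le C\,y(q)^{N}$ for any $N$; combining with $G_{0,q}(p)\le(4\pi|p-q|)^{-1}$, this contribution is at most $C\,y(q)^{N}\int_{|p-q|<y(q)}|p-q|^{-1}\,dp=C\,y(q)^{N+2}\le C\,y(q)$. Thus $|s_t|^2\le\Phi\le C\,y$, i.e. $|s_t|\le C\,y^{1/2}$.

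The one place the argument is not completely routine is the estimate on $\Phi$ near the boundary: the kernel $G_{0,q}$ carries the usual $|p-q|^{-1}$ singularity, but its extra $y(p)y(q)$ numerator, together with the infinite-order vanishing of $V(\Psi_0)$ at $y=0$, is exactly what upgrades the known decay $|s_t|=O(y^{1/2-\epsilon})$ to the full power $y^{1/2}$. The care needed is in splitting the domain of integration correctly (near $q$ versus away from $q$) so that the super-polynomial vanishing of $V(\Psi_0)$ is available precisely where the kernel is most singular; the rest is bookkeeping with the explicit Green's function already used elsewhere in this section.
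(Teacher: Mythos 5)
Your proof is correct and follows essentially the same route as the paper's: the half-space Dirichlet Green's function for $-\Delta$, the Weitzenb\"ock-derived inequality $-\Delta(|s_t|^2)\le 2|V(\Psi_0)||s_t|$, and a near/far split of the resulting integral at scale $y(q)$ that exploits the infinite-order vanishing of $V(\Psi_0)$ at $y=0$ together with the extra factor $y(q)$ carried by the kernel. The only difference is cosmetic: by feeding in the uniform bound $|s_t|\le C$ from Lemma \ref{lemma of C infinity estimate 1} you reach $|s_t|^2\le Cy$ in a single pass, whereas the paper starts from the $s_t$-dependent bound $|s_t|\le C(s_t)y^{1/2}$ and removes the dependence on $s_t$ by iterating the same inequality finitely many times.
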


\begin{proof}

Since by lemma \ref{lemma of C infinity estimate 1} $|s_t|$ is bounded above by $C$ on the entire $X$. We only need to assume $y$ is small. Recall in the proof of lemma \ref{lemma of C infinity estimate 1}, the maximal principle that we have:

$$|s_t|^2 \leq \int_X 2 G_{0, q}|V(\Psi_0)| |s_t|. $$

Suppose initially  we suppose 
$$|s_t| \leq \min\{C(s_t) y^{\frac{1}{2}}, C\}, $$
where $C(s_t)$ depends on $s_t$.\\

Let $B$ be the ball of radius $\dfrac{y(q)}{2}$ centered at $q$. Then $|G_{0, q}| \leq C y^{-2} y(q) $ outside of this ball and 
$$ \int_{X \backslash B} 2 G_{0, q}|V(\Psi_0)| |s_t| \leq CC(s_t)y(q) \int_{X \backslash B} y^{-2}  |V(\Psi_0)|.$$

Note that $\displaystyle \int_{X} y^{-2}  |V(\Psi_0)|$ is bounded. So centered at the point $q$, we have

$$ \int_{X \backslash B} 2 G_{0, q}|V(\Psi_0)| |s_t| \leq CC(s_t)y(q).$$

On the other hand, within the ball $B$, since $|V(\Psi_0)| = O(y^n)$ for any positive integer $n$ when $y$ is small, we have
$$\int_B 2 G_{0, q}|V(\Psi_0)| |s_t|  \leq C C(s_t) y. $$
So after all  
$$|s_t|^2 \leq CC(s_t) y. $$

Repeating this finitely many times we get
$$|s_t| \leq C \sqrt{y}. $$

\end{proof}

\paragraph{The interior $\rho^{-1}C^{0, \alpha}$  estimates for $s_t$ away from $y = 0$ } (Bando and Siu's estimates)~\\

We continue to assume $s_k \in \mathcal{X}^k_{\frac{1}{2}, \frac{1}{2}, -l}$ solves

$$V(\Psi_0, s_t) + ts_{t} = 0.$$

We first work in a region away from the $y = 0$ boundary.

\begin{lemma}\label{lemma of C(0, alpha) estimate 1}
    Suppose $B_r$ is a ball of radius $r$ in $X$ which is far away from the $y = 0$ boundary. We assume $r \leq 1$. Let $G$ be the Green's function of $\Delta$ centered at the center of the Ball (basically $4\pi$ times $1$ over the distance to the center). Let $$f(r): = \int_{B_r}G|\nabla s_t|^2.$$
    Then there exists a constant $C$ and $\alpha \in (0, 1)$ which only depends on $\Psi_0$, such that
    $$f(r) \leq C r^{2\alpha}. $$

\end{lemma}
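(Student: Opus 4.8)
The plan is to establish the local Morrey-type bound $f(r)\le Cr^{2\alpha}$ via a Morrey–Campanato iteration on the quantity $f(r)=\int_{B_r}G|\nabla s_t|^2$. Since we work in a fixed ball $B_r$ bounded away from $y=0$, the operator $\Delta$ is just the flat Laplacian on $\mathbb{R}^3$ up to uniformly controlled lower-order terms, and the Green's function $G$ behaves like $(4\pi\,\mathrm{dist})^{-1}$, so $f(r)$ is comparable to $r^{-1}\int_{B_r}|\nabla s_t|^2$ in scale. First I would record the pointwise differential inequality for $|s_t|^2$ coming from the Weitzenb\"ock formula (the ``second fact'' of subsection \ref{Subsection: The deformation of a configuration}): since $V(\Psi_0,s_t)+ts_t=0$ with $t\ge 0$,
\begin{equation*}
-\tfrac12\Delta(|s_t|^2)+\sum_{i=1}^3|v(-s_t)D_i^*s_t|^2 = -t|s_t|^2-\langle V(\Psi_0),s_t\rangle,
\end{equation*}
so that, using $|s_t|\le C$ on the relevant region (Lemma \ref{lemma of C infinity estimate 1}) and the exponential/arbitrary-order decay of $V(\Psi_0)$, one gets $-\Delta(|s_t|^2)\le C$ and a coercive control $\sum_i|v(-s_t)D_i^*s_t|^2\le -\tfrac12\Delta(|s_t|^2)+C$. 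Because $v(-s_t)$ is uniformly bounded below (as $|s_t|$ is bounded), this dominates $|\nabla s_t|^2$ up to bounded terms involving $\Psi_0$ itself, giving a usable reverse-type bound on $|\nabla s_t|^2$ by $-\Delta(|s_t|^2)$ plus a smooth bounded remainder.

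The core of the argument is then a standard hole-filling / Caccioppoli iteration. I would cut off $s_t$ by a function $\chi$ equal to $1$ on $B_{r/2}$ and supported in $B_r$ with $|\nabla\chi|\lesssim r^{-1}$, multiply the elliptic equation for $s_t$ by $\chi^2 G$ (or integrate the differential inequality for $|s_t|^2$ against $\chi^2$), and use integration by parts together with the pointwise bounds above to obtain an inequality of the form
\begin{equation*}
f(r/2)\le \theta f(r)+ C r^{2}
\end{equation*}
for some $\theta<1$ independent of $r$; the key point that produces the contraction constant $\theta<1$ is the weighting by $G$, which assigns more mass near the center, exactly as in Bando–Siu and in Dimakis \cite{Dimakis2022TheField}. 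Iterating $r\mapsto r/2$ and summing the geometric series yields $f(r)\le C r^{2\alpha}$ for the $\alpha\in(0,1)$ determined by $\theta=2^{-2\alpha}$ (shrinking $\alpha$ if necessary so that $2\alpha<2$, which is automatic). The constant and $\alpha$ depend only on $\Psi_0$ because all the ingredients — the ellipticity constants, $\|V(\Psi_0)\|$, the $L^\infty$ bound on $s_t$ from Lemma \ref{lemma of C infinity estimate 1} — depend only on $\Psi_0$ (and, in the bounded-away-from-$y=0$ region, not on $t\in[0,1]$).

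The main obstacle, and the point requiring care, is making the hole-filling step genuinely yield $\theta<1$ rather than $\theta=1$: this requires exploiting the $G$-weighting correctly, i.e.\ that $\int_{B_r\setminus B_{r/2}}G\le(1-c)\int_{B_r}G$ with $c>0$ uniform in $r$, combined with a Poincar\'e/Caccioppoli estimate that controls $\int_{B_r}G|\nabla(\chi s_t)|^2$ by the same quantity on the annulus plus the inhomogeneous term. One must also absorb the cross terms $\int G\,\nabla\chi\cdot(\ldots)$ using Young's inequality with a small parameter, and verify that the $O(|s_t|^2)$ and $O(|V(\Psi_0)||s_t|)$ contributions integrate against $\chi^2 G$ to something $\le Cr^2$ (here $\int_{B_r}G\lesssim r^2$ since $G\sim \mathrm{dist}^{-1}$ in three dimensions). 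A secondary technical nuisance is that $V(\Psi_0,s_t)$ contains the nonlinear terms $Q(s_t)$ of subsection \ref{Subsection: The deformation of a configuration}; these are quadratic-and-higher in $s_t$ and $\nabla s_t$, but since $|s_t|$ is already known to be small (bounded, and decaying), one can treat the $\nabla s_t$-quadratic part of $Q(s_t)$ as a small perturbation of the leading term $-\Delta s_t$ and absorb it, keeping the contraction constant strictly below $1$ after possibly shrinking the working scale.
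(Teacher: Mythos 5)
Your setup (the Weitzenb\"ock inequality $-\Delta(|s_t|^2)+\tfrac1C|\nabla s_t|^2\le C$, integration against a cut-off times $G$) reproduces Step~1 of the paper's proof and yields only the uniform bound $f(r)\le C$; it does not produce decay in $r$. The genuine gap is the missing \emph{mean subtraction}. If you integrate the inequality for $|s_t|^2$ against $\chi^2 G$ and integrate by parts, the annulus terms are of the form $\int |s_t|^2\bigl(|\Delta\chi^2|\,G+|\nabla\chi^2|\,|\nabla G|\bigr)$, which is $O(r^{-3})\cdot O(r^3)=O(1)$ since $|s_t|$ is merely bounded; these terms cannot be absorbed and cannot be identified with $f(2r)-f(r)$. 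The paper's proof instead replaces $s_t$ by its ``difference'' from the annulus average $\bar s_t$ --- in the non-abelian setting this means defining $\sigma\in isu(2)$ by $e^{2s_t}=e^{\bar s_t}e^{2\sigma}e^{\bar s_t}$, checking $|\sigma|\le C|s_t-\bar s_t|$ and $|\nabla s_t|\le C|\nabla\sigma|$, and checking $|V(\Psi_0,\bar s_t)|\le C$ (which uses that $\bar s_t$ is constant and $|\Psi_0|$ is bounded away from $y=0$). Only then do the annulus terms become $\tfrac{C}{r^3}\int_{B_{2r}\setminus B_r}|\sigma|^2$, which the Poincar\'e inequality on the annulus converts into $C\int_{B_{2r}\setminus B_r}G|\nabla s_t|^2=C\bigl(f(2r)-f(r)\bigr)$, giving the hole-filling inequality $f(r)\le Cr^2+C\bigl(f(2r)-f(r)\bigr)$.

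Relatedly, your proposed mechanism for the contraction constant $\theta<1$ --- that $G$ ``assigns more mass near the center'' so that $\int_{B_r\setminus B_{r/2}}G\le(1-c)\int_{B_r}G$ --- is not how the argument works. The contraction comes from the purely algebraic hole-filling step: adding $Cf(r)$ to both sides of $f(r)\le Cr^2+C\bigl(f(2r)-f(r)\bigr)$ gives $f(r)\le\tfrac{C}{1+C}f(2r)+Cr^2$ with $\tfrac{C}{1+C}<1$. The role of the Green's function weight is to make the scalings match, so that $r^{-3}\int_{\mathrm{ann}}|\sigma|^2\lesssim r^{-1}\int_{\mathrm{ann}}|\nabla s_t|^2\lesssim\int_{\mathrm{ann}}G|\nabla s_t|^2$ is exactly the increment of $f$. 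Without the subtraction of $\bar s_t$ (and its non-commutative surrogate $\sigma$) your iteration does not close, so the proposal as written has a genuine gap at its central step.
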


\begin{proof}

Recall the fact that $|s_t|$ is bounded above by a constant that only depends on $V(\Psi_0)$. So the point-wise norm (largest eigenvalue) of the operator $ad_{-s_t}$ is also bounded uniformly on the entire $X$. Note that $ad_{s_t}$ is a self-dual map. So all its eigen-values are real number. Thus $$|v(-s)| = |\sqrt{\dfrac{e^{ad_{-s}} - 1}{ad_{-s}}}|  $$
is bounded below uniformly (that is to say, the norm of all its eigenvalues are bounded below by a positive constant).\\

So for a constant $C$ which only depends on $V(\Psi_0)$, but may change from line by line, we have
$$-\Delta(|s_t|^2) + \dfrac{1}{C}\sum\limits_{i = 1}^3|D_i^*s_t|^2 + t|s_t|^2 \leq |V(\Psi_0)||s_t|. $$

From now on, let $\chi_r$ be a standard smooth cut-off function which is $1$ on $B_r$ and $0$ outside of $B_{2r}$. We assume $|\nabla^k \chi_r| \leq \dfrac{C}{r^k}$ uniformly.\\

\textbf{Step 1} We first show that $f(r)$ is uniformly bounded above. We have
$$-\Delta(|s_t|^2) + \dfrac{1}{C}|\nabla s_t|^2 \leq |V(\Psi_0)||s_t| \leq C. $$ 

On a ball $B_r$ of radius $r$ centered at $p$, 
$$\int_{B_{r}} G |\nabla s_t|^2 \leq \int_{B_{2r}} \chi_r G |\nabla s_t|^2 \leq  C\int_{B_{2r}} \chi_{r} G (\Delta (|s_t|^2) + C)$$
$$\leq  C|s_t|^2(p)  + 
 C\int_{B_{2r}}|s_t|^2 ((\Delta \chi_r)G + (\nabla \chi_r \nabla G)) + C \leq C.  $$

 \textbf{Step 2} Let $\bar{s}_t$ be the average of $s_t$ on the region $B_{2r} \backslash B_r$, where $B_{2r}$ and $B_r$ have the same center. That is to say,
 $$\bar{s}_t = \dfrac{1}{\text{Vol}(B_{2r} \backslash B_r)}\int_{B_{2r} \backslash B_r} s_t. $$

 Since $\bar{s}_t$ is a constant, all its derivatives are $0$. Moreover, since we are working in a region away from the $y = 0$ boundary, we know that $|\Psi_0|$ is uniformly bounded and hence
 $$|D_1\bar{s}_t|, ~ |D_2\bar{s_t}|, ~ |D_3\bar{s}_t|, ~ |D_1(\gamma(-\bar{s}_t))|, ~|D_2(\gamma(-\bar{s}_t))|, ~|D_3(\gamma(-\bar{s}_t))| $$
 are all bounded uniformly. Thus
 $$|V(\Psi_0, \bar{s}_t)| \leq |V(\Psi_0)| + |L_{\Psi_0} \bar{s}_t | + |Q(\bar{s}_t)| \leq C.$$
 Suppose $e^{2s_t} =  e^{\bar{s}_t}e^{2\sigma}e^{\bar{s}_t}$. (See subsection \ref{Subsection: The deformation of a configuration} for details on how to add Hermitian gauge transformations.) Then $\sigma \in isu(2)$. We will verity that
$$|\sigma| \leq C|s_t - \bar{s}_t|, ~~~ |\nabla s| \leq C |\nabla \sigma|. $$

In fact, for the first inequality, suppose the eigen-values of $\sigma$ are $\lambda_1$ and $\lambda_2$. Then $\tr(e^{\sigma}) = e^{\lambda_1} + e^{\lambda_2}$. And  the norm $|\sigma|$ is equivalent with $|\lambda_1| + |\lambda_2|$, which is also equivalent with $\tr(e^{\sigma}) - 2$ when $|\sigma|$ is bounded. On the other hand, we indeed have $|s_t|, |\bar{s}_t|, |\sigma|$ are all bounded and
$$\tr(e^{2\sigma}) = \tr(e^{ - \bar{s}_t}e^{2s_t}e^{-\bar{s}_t}) = \tr(e^{2s_t}e^{-2\bar{s}_t}) = \tr (e^{2(s_t - \bar{s}_t)}) .  $$
Then the first inequality follows.\\

The second inequality is because
$$\gamma(2s_t) (2\nabla(s_t)) e^{2s_t} = e^{\bar{s}_t} \gamma(2\sigma) (2\nabla \sigma) e^{2\sigma} e^{\bar{s}_t}.$$

Again, $|\gamma(2s_t)|, |e^{2s_t}|, |e^{2\sigma}|, |\gamma(2\sigma)|, |e^{\bar{s}_t}|$ are all bounded above and below away $0$. So $|\nabla s_t|$ and $|\nabla \sigma|$ can bound each other.\\

Applying the Weitzenbock formula in subsection \ref{Subsection: The deformation of a configuration} and the fact that $s_t, \bar{s}_t, \sigma$ are bounded above, we get
$$ - \dfrac{1}{2} \Delta (|\sigma|^2) + \dfrac{1}{C} |\nabla \sigma|^2 \leq ~~ (|V(\Psi_0, s_t))| + |V(\Psi_0, \bar{s}_t)|) |\sigma| \leq  ~ C. $$

So we have

$$\int_{B_r} G|\nabla s|^2 \leq C \int_{B_r}G|\nabla \sigma|^2 \leq C\int_{B_{2r}} \chi_r G|\nabla \sigma|^2 \leq C \int_{B_{2r}} \chi_r G(1 + \Delta(|\sigma|^2)) $$
$$\leq Cr^2 + C\int_{B_{2r}}(|\Delta(\chi_r) G| + |\nabla \chi_r| |\nabla G|)|\sigma|^2 \leq Cr^2 + C \dfrac{1}{r^3} \int_{B_{2r} \backslash B_r} |\sigma|^2 \leq Cr^2 + C\int_{B_{2r} \backslash B_r}G |\nabla s_t|^2.$$

The last inequality above is the Poincare inequality

$$\int_{B_{2r}\backslash B_r}|\sigma|^2 \leq C\int_{B_{2r} \backslash B_r} |s_t - \bar{s}_t|^2 \leq C r^2 \int_{B_{2r} \backslash B_r} |\nabla s_t|^2.$$

So $f(r) \leq Cr^2 + C(f(2r) - f(r)).$\\

By some algebraic manipulations and the fact that $f(r)$ is an increasing function, one sees from above that
$$f(r) \leq C r^{2\alpha}, $$
for some $\alpha \in (0, 1)$.
\end{proof}

Note that the proof of lemma \ref{lemma of C infinity estimate 1} implies that locally
$|\nabla s_t|$ has a finite $L^{2,1 + 2\alpha}$ norm (the Morry-Campanato norm, see appendix \ref{Appendix: Morrey-Camponato spaces and inequalities}). By theorem \ref{theorem D2} in the same appendix, we know that 
$$[s_t]_{C^{0, \alpha}(B)} \leq C.$$
So away from the $y = 0$ boundary (say, $y \geq 1$),
$$|s_t|_{C^{0, \alpha}} \leq C. $$

Moreover, when $\rho$ is large, (Here $\rho$ means the $\rho$ value of the center of the ball, hence is a constant.)

$$-\Delta(|\rho s_t|^2) + \dfrac{1}{C}\sum\limits_{i = 1}^3|\rho D_i^*s_t|^2 + t|\rho^k s_t|^2 \leq |V(\Psi_0)|\rho^{2}|s_t|, $$
which is bounded above uniformly. And the proof doesn't need to change if we replace $s_t$ with $\rho s_t$, where $\rho$ is the $\rho$-value of the center of the ball $B$. In particular, we still have
$$|V(\Psi_0, \rho \bar{s}_t )| \leq C. $$
(We need to use the first fact in subsection \ref{Subsection: The deformation of a configuration} and lemma \ref{lemma of C infinity estimate 2} here.)\\

So in fact, we get when $y$ is not small (say $y \geq 1$), we get

 $$||s_t||_{\rho^{-1} C^{0, \alpha}} \leq C$$
 for some $\alpha \in (0, 1)$.\\

 Moreover, by exactly the same reason, if $t \geq t_0 > 0$ and allow $C$ to depend on $t_0$ and an additional positive integer $l$. Then

 $$||s_t||_{\rho^{-l} C^{0, \alpha}} \leq C$$
 for some $\alpha \in (0, 1)$.\\

\paragraph{The $y^{\frac{1}{2}}C_{\text{\text{ie}}}^{0, \alpha}$  estimates for $s_t$ near $y = 0$ } (This is an ``adapted to the edge boundary version" of Bando and Siu's estimate, being analog with an estimate of He and Mazzeo introduced in \cite{He2020TheII}.)~\\

We cannot use the same argument to show that $s_t$ is still locally in $C^{0, \alpha}$ near $y = 0$ since we don't have a uniform bound on $|\Psi_0|$ near $y = 0$, which is required in the estimate of $|V(\Psi_0, \bar{s}_t)|$. In fact, we only have
$|\Psi_0| \leq \dfrac{C}{y}$ when $y$ is small. On the other hand, the definition of $C^{0, \alpha}_{\text{ie}}$ is also adjusted near this boundary in a scale-invariant way. So in fact, we need to do the argument in a scale-invariant way near the boundary.\\

Consider a ball $B$ of radius $\dfrac{y(q)}{4}$ centered at a point $q$. We hope get an $y^{\frac{1}{2} } C^{0, \alpha}_{\text{ie}}$ estimate for $s_t$ and some small $\epsilon > 0$.\\

Thanks to the fact that $|s_t| = O(\sqrt{y})$ and the fact that $|V(\Psi_0)| = O(y^{n})$ for any positive integer $n$ when $y$ is small.\\

We still define

$$f(r) = \int_{B_r} G|\nabla s_t|^2, $$
where $B_r$ is the ball of radius $r$ centered at $q$. We assume $0 < r \leq \dfrac{y(q)}{4}$.\\

In the ball, we have
$$-\Delta(|s_t|^2) + \dfrac{1}{C}\sum\limits_{i = 1}^3|D_i^*s_t|^2 + t|s_t|^2 \leq |V(\Psi_0)||s_t| \leq Cy(q). $$

Then by the same reason as in the step 1 of the proof of lemma \ref{lemma of C(0, alpha) estimate 1}, 
$$f(r) \leq C|s_t|^2(q) + C\int_{B_{2r}} |s_t|^2 (|\Delta \chi_r| |G| + |\nabla \chi_r||\nabla G|) + Cy(q) \leq Cy(q). $$

On the other hand, in step 2, we have $|\Psi_0| = O(y^{-1})$ and $|s_t| \leq C \sqrt{y}$. So
 $$|D_1\bar{s}_t|, ~ |D_2\bar{s_t}|, ~ |D_3\bar{s}_t|, ~ |D_1(\gamma(-\bar{s}_t))|, ~|D_2(\gamma(-\bar{s}_t))|, ~|D_3(\gamma(-\bar{s}_t))| $$
 are all bounded by $C y^{-\frac{1}{2}}$ uniformly. So $|L_{\Psi_{0}} \bar{s}_t| \leq Cy^{-\frac{1}{2}}$ and $|Q(\bar{s}_t)| \leq Cy^{-1}$, where $\bar{s}_t$ has the same meaning as in the proof of lemma \ref{lemma of C(0, alpha) estimate 1}.\\

So what we get is 
$$|V(\Psi_0, \bar{s}_t)| \leq Cy(q)^{-1}. $$

And like in step 2 of the proof of lemma \ref{lemma of C(0, alpha) estimate 1},

$$- \dfrac{1}{2}\Delta(|\sigma|^2) + \dfrac{1}{C}|\nabla \sigma|^2 \leq Cy(q)^{-1} \cdot y(q)^{\frac{1}{2}} = C y(q)^{-\frac{1}{2}}. $$

So what we have is

$$\int_{B_r} G|\nabla s|^2 \leq C \int_{B_{2r}} \chi_r G(y(q)^{-\frac{1}{2}} + \Delta(|\sigma|^2)) $$
$$\leq Cy(q)^{-\frac{1}{2}} r^2 + C \dfrac{1}{r^3} \int_{B_{2r} \backslash B_r} |\sigma|^2 \leq Cy(q)^{-\frac{1}{2}}r^2 + C\int_{B_{2r} \backslash B_r}G |\nabla s_t|^2.$$

In fact, what we get is

$$f(r) \leq Cy(q)^{-\frac{1}{2}}r^2 + C(f(2r) - f(r)). $$

Doing algebraic manipulations and keep in mind that $r \leq \dfrac{y(q)}{4}$ and we have $f(r) \leq Cy(q)$ for any $0 < r \leq \dfrac{y(q)}{4}$, we get

$$f(r) \leq Cy(q)^{1 - 2\alpha} r^{2\alpha} $$
for some $\alpha \in (0, 1)$.\\

This implies that

$$\int_{B_r}|\nabla s_t|^2 \leq Cy(q)^{1-2 \alpha} r^{2\alpha + 1}. $$

So

$$[s_t]_{C^{0, \alpha}} \leq C y(q)^{1 - 2\alpha}.$$

Transfer it to the edge version, we get
$$[s_t]_{C^{0, \alpha}_{\text{ie}}} \leq C y(q)^{1 - \alpha}. $$
We may assume $\alpha < \dfrac{1}{2}$, so together with the fact that $|s_t| \leq C y^{\frac{1}{2}}$, we get

$$||s_t||_{y^{\frac{1}{2}} C^{0, \alpha}_{\text{ie}}} \leq C.  $$

Even near the type III boundary or any corner, we have the same estimate. There $y$ should be written as $r\psi$. (We are somehow abuse of notations here. The letter $r$ here means the coordinate near the type III boundary as defined in appendix \ref{Appendix: Compactification of $X$}, not the radius of a ball.) And here is what we get on the entire $X$.

\begin{proposition}
    There is a constant $C$ which depends only on $\Psi_0$ such that for all $t$, uniformly we have
    $$||s_t||_{\rho^{-1}\psi^{\frac{1}{2}}r^{\frac{1}{2}} C^{0, \alpha}_{\text{ie}}} = ||s_t||_{\mathcal{X}^{0, \alpha}_{1/2, 1/2, -1}} \leq C. $$
Moreover, if $t \geq t_0 > 0$ and allow $C$ to depend on $t_0$ and a positive integer $l$, then
$$||s_t||_{\rho^{-l}\psi^{\frac{1}{2}}r^{\frac{1}{2}} C^{0, \alpha}_{\text{ie}}} = ||s_t||_{\mathcal{X}^{0, \alpha}_{1/2, 1/2, -l}} \leq C. $$
\end{proposition}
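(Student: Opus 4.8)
The plan is to obtain the proposition by assembling the local estimates already established in this subsection, which together cover all of the compactification $\hat{X}$. Recall from appendix \ref{Appendix: Weighted Holder spaces of (iterated) edge type} that, up to a fixed constant, $\|s_t\|_{\mathcal{X}^{0,\alpha}_{1/2,1/2,-1}}=\|s_t\|_{\rho^{-1}\psi^{1/2}r^{1/2}C^{0,\alpha}_{\mathrm{ie}}}$ is the supremum, over a fixed locally finite covering of $\hat{X}$ by balls adapted to its faces and corners, of the $C^{0,\alpha}$ norm of $s_t$ on each ball rescaled to unit size and weighted by the value of $\rho^{-1}\psi^{1/2}r^{1/2}$ there; near any face or corner on which $y\to 0$ the relevant defining function, or product of defining functions at a corner, is comparable to $y$ — in particular $\psi r\simeq y$ near a corner and $r\simeq y$, $\psi\simeq 1$ near the type III face, exactly as used in subsection \ref{Subsection: Improve the approximate solution when $y$ is small}. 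Hence it is enough to bound the contribution of each of the finitely many types of balls uniformly in $t\in[0,1]$ (resp.\ in $t\in[t_0,1]$ for the second assertion) and then take the maximum of the resulting region constants.

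I would split the covering into balls lying in $\{y\ge 1\}$ and balls lying in a region where $y$ is small. On $\{y\ge 1\}$ the weights $\psi,r$ are comparable to $1$, so one only needs $\|s_t\|_{\rho^{-l}C^{0,\alpha}}\le C$ with $l=1$ for the first assertion and arbitrary $l$ for the second; this is the interior Bando--Siu argument of Lemma \ref{lemma of C(0, alpha) estimate 1} run with $\rho^l s_t$ in place of $s_t$ — legitimate because $-\Delta(|\rho^l s_t|^2)+\tfrac1C\sum_i|\rho^l D_i^* s_t|^2+t|\rho^l s_t|^2\le \rho^{2l}|V(\Psi_0)|\,|s_t|$, which is uniformly bounded by the exponential decay of $V(\Psi_0)$ and the bound $\sup|\rho^l s_t|\le C$ from Lemma \ref{lemma of C infinity estimate 1} ($l=1$ for general $t$, any $l$ once $t\ge t_0$) — followed by the Morrey--Campanato embedding, Theorem \ref{theorem D2}. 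For balls with $y$ small I would use the scale-invariant refinement carried out just above: on a ball of radius $\tfrac14 y(q)$ about a point $q$ with $y(q)$ small, the inputs $\sup|\rho^l s_t|\le C$, $|s_t|\le Cy^{1/2}$ (Lemma \ref{lemma of C infinity estimate 2}) and $|V(\Psi_0)|=O(y^n)$ drive the $f(r)$-iteration, now for $\rho^l s_t$, to $[\rho^l s_t]_{C^{0,\alpha}}\le C y(q)^{1-2\alpha}$, hence (passing to the iterated-edge rescaling and shrinking $\alpha<\tfrac12$) $\|\rho^l s_t\|_{y^{1/2}C^{0,\alpha}_{\mathrm{ie}}}\le C$; rewriting $y^{1/2}\simeq(\psi r)^{1/2}$ at a corner (or $y^{1/2}\simeq r^{1/2}$ at the type III face) and pulling $\rho^l$ back out yields precisely the $\rho^{-l}\psi^{1/2}r^{1/2}C^{0,\alpha}_{\mathrm{ie}}$ bound. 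It is helpful here that $V(\Psi_0)$ vanishes identically near the corners where $|z|$ is large and $y$ small, and where $y$ is large and $|z|$ small, so no extra care is needed there.

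The main point — more a matter of careful bookkeeping than a real obstacle — is this last step: checking that the covering from appendix \ref{Appendix: Weighted Holder spaces of (iterated) edge type} is fine enough that every ball in it falls into one of the two regimes above, and that on each ball the single-defining-function estimate ($\rho^{-l}C^{0,\alpha}$ away from $y=0$, $y^{1/2}C^{0,\alpha}_{\mathrm{ie}}$ near it) genuinely reassembles into the product-weighted iterated-edge norm $\rho^{-1}\psi^{1/2}r^{1/2}C^{0,\alpha}_{\mathrm{ie}}$; this rests entirely on the comparabilities of the boundary defining functions recorded in appendix \ref{Appendix: Compactification of $X$} (above all $\psi r\simeq y$). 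Granting this, the proposition follows by taking the maximum of the finitely many region constants, and its $t\ge t_0$ refinement is identical except that Lemma \ref{lemma of C infinity estimate 1} supplies $\sup|\rho^l s_t|\le C_1$ for all $l$ only when $t$ is bounded away from $0$.
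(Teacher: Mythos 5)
Your proposal is correct and follows essentially the same route as the paper: the proposition is obtained by combining the interior Bando--Siu estimate (run for $\rho^l s_t$, with $l=1$ for all $t$ and arbitrary $l$ for $t\ge t_0$) away from $y=0$ with the scale-invariant version on balls of radius $\tfrac14 y(q)$ near $y=0$, feeding in the $L^\infty$ bounds $\sup|\rho^l s_t|\le C$ and $|s_t|\le Cy^{1/2}$ together with $|V(\Psi_0)|=O(y^n)$, and then translating $y^{1/2}\simeq(\psi r)^{1/2}$ into the iterated-edge weights. Your extra care about the covering and the reassembly of single-defining-function bounds into the product-weighted norm is just the bookkeeping the paper leaves implicit.
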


\paragraph{The $C^{1, \alpha}$ estimates away from boundaries}(Hilderbrandt's estimate)~\\

In the following arguments, sometimes we need to shrink $\alpha$. So we allow $\alpha$ to change from line to line, but always independent with $s_t$. We first work in a region away from the $y = 0 $ boundary (say, $y \geq 1$).\\

Recall that
$$V(\Psi_0, s_t) + ts_t = 0. $$

So $$V(\Psi_0) + L_{\Psi_0}s_t + Q(s_t) + ts_t = 0. $$

Away from boundaries, this can be written as

$$\Delta s_t = A + B(\nabla(s_t)) + C(\nabla s_t \otimes \nabla s_t), $$
where $A, B, C$ are all bounded.

\begin{lemma}\label{Lemma of interior C(1, alpha) estimate}
    Consider the region away from boundaries. Then for some $\alpha \in (0, 1)$,
    $$||\nabla s_t||_{C^{0, \alpha}} \leq C, $$
    where $C$ depends only on $\Psi_0$.
\end{lemma}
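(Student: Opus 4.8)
The plan is to run the classical Hildebrandt--Widman regularity argument for a second order elliptic system with a quadratic gradient nonlinearity (cf. \cite{Hildebrandt1985HarmonicManifolds}, and the adaptation in \cite{Dimakis2022TheField}), feeding in the bounds already proved. The input facts are: $|s_t| \le C$ on all of $X$ by lemma \ref{lemma of C infinity estimate 1}; $[s_t]_{C^{0,\alpha_0}(B)} \le C$ for a fixed $\alpha_0 \in (0,1)$ on every interior ball $B$ by lemma \ref{lemma of C(0, alpha) estimate 1} together with the remark following it; and, away from $y = 0$, the coefficients $A,B,C$ in $\Delta s_t = A + B(\nabla s_t) + C(\nabla s_t \otimes \nabla s_t)$ are bounded (since $|\Psi_0|$ is bounded there), so that $|\Delta s_t| \le C(1 + |\nabla s_t|^2)$ and $\operatorname{osc}_{B_r} s_t \le C r^{\alpha_0}$ on small balls. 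Throughout write $f := \Delta s_t$, and let all constants $C$ depend only on $\Psi_0$.

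First I would carry out a harmonic comparison on a small ball $B_r$ with concentric sub-balls $B_\rho$. Let $h$ be the componentwise harmonic function with $h = s_t$ on $\partial B_r$ and set $w := s_t - h$. Since $h$ minimizes Dirichlet energy for its boundary data and $|\nabla h|^2$ is subharmonic, $\int_{B_\rho}|\nabla h|^2 \le (\rho/r)^3 \int_{B_r}|\nabla s_t|^2$. Since $w = 0$ on $\partial B_r$ and, by the maximum principle, $\|w\|_{L^\infty(B_r)} \le 2\operatorname{osc}_{B_r} s_t \le C r^{\alpha_0}$, testing $\Delta w = f$ against $w$ gives $\int_{B_r}|\nabla w|^2 = -\int_{B_r} w f \le C r^{\alpha_0}\bigl(\int_{B_r}|\nabla s_t|^2 + r^3\bigr)$. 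Setting $\psi(r) := \int_{B_r}|\nabla s_t|^2$ and combining, for $\rho \le r$ small,
$$\psi(\rho) \le 2(\rho/r)^3 \psi(r) + C r^{\alpha_0}\psi(r) + C r^{3+\alpha_0}.$$
Because the coefficient $C r^{\alpha_0} \to 0$ as $r \to 0$, the standard iteration lemma for such decay inequalities gives, for every $\mu < 3$, a constant $C_\mu = C_\mu(\Psi_0)$ with $\psi(\rho) \le C_\mu \rho^\mu$ on interior balls; hence $\int_{B_\rho}|f| \le C(\rho^3 + \psi(\rho)) \le C\rho^{3-\eta}$ for arbitrarily small $\eta > 0$.

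Next I would conclude with a Schauder-type step. On a fixed interior unit ball, write $s_t = N[\chi f] + g$, where $N$ is the Newtonian potential, $\chi$ a cutoff equal to $1$ on the concentric half-ball, and $g$ harmonic there (hence smooth, with all derivatives bounded by $\|s_t\|_{L^2} \le C$). In dimension three the kernel of $\nabla N$ has order $-2$, so the Morrey bound $\int_{B_\rho}|\chi f| \le C\rho^{3-\eta} = C\rho^{2+(1-\eta)}$ forces $\nabla N[\chi f] \in C^{0,1-\eta}$ with norm $\le C$. Therefore $\nabla s_t = \nabla N[\chi f] + \nabla g \in C^{0,\alpha}$ on the quarter-ball with $\alpha = 1-\eta$ and $\|\nabla s_t\|_{C^{0,\alpha}} \le C$; uniformity in $t$ and in $s_t$ is automatic because every input bound is. This is the claimed estimate.

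The step I expect to be the main obstacle is the quadratic gradient term $C(\nabla s_t \otimes \nabla s_t)$: this is the borderline nonlinearity for which no unconditional $C^{1}$ regularity holds, so the whole scheme relies essentially on already knowing that $s_t$ is H\"older continuous, which makes $\operatorname{osc}_{B_r} s_t$ small on small balls and allows the ``bad'' term to be absorbed through the decay iteration (rather than, say, a crude higher-integrability/Gehring bootstrap, which stalls below the exponent needed to reach $C^1$). The other point to watch is the bookkeeping confirming that every constant depends only on $\Psi_0$ --- the $L^\infty$ and $C^{0,\alpha_0}$ bounds on $s_t$ and the bounds on $A, B, C$ --- but this is exactly what the earlier lemmas supply.
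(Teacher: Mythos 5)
Your proof is correct, and its first half is exactly the Hildebrandt scheme the paper runs: harmonic replacement on concentric balls, using the interior $C^{0,\alpha_0}$ bound on $s_t$ (hence the smallness of $\operatorname{osc}_{B_r}s_t$) to absorb the quadratic gradient term, and iterating the resulting decay inequality. Where you genuinely diverge is in which quantity you iterate and how you convert its decay into H\"older continuity of $\nabla s_t$. The paper tracks the Campanato quantity $\int_{B_r}|\nabla s_t-(\widebar{\nabla s_t})_r|^2$, pushes its decay exponent past the dimension to $3+2\alpha'$ via lemma \ref{lemma D3} and a finite bootstrap through theorem \ref{theorem D2}, and then reads off $\nabla s_t\in C^{0,\alpha'}$ from the Campanato embedding. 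You instead track only the Morrey energy $\int_{B_\rho}|\nabla s_t|^2$, stop just below exponent $3$ (which is all the iteration lemma can give, since the perturbation coefficient $Cr^{\alpha_0}$ only vanishes in the limit), and finish with the Riesz-potential fact that a uniform bound $\int_{B_\rho}|\Delta s_t|\le M\rho^{2+\beta}$, $0<\beta<1$, forces $\nabla N[\chi\,\Delta s_t]\in C^{0,\beta}$. Both endings are standard; yours avoids the bookkeeping of iterating the Campanato exponent past the critical value $3$ and even yields every $\alpha<1$, but it rests on a potential estimate that is not among the embeddings recorded in appendix \ref{Appendix: Morrey-Camponato spaces and inequalities}, so you should either prove it (a short dyadic decomposition of the kernel $|x-z|^{-2}$ over annuli, using the Morrey bound on each) or cite it explicitly. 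The uniformity of all constants in $t$ and $s_t$ is, as you note, inherited from lemmas \ref{lemma of C infinity estimate 1} and \ref{lemma of C(0, alpha) estimate 1}.
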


\begin{proof}
    Let $B_r$ be a ball of radius $r$ whose center is away from all boundaries. We assume $r \leq 1$.\\

Then from the proof of lemma \ref{lemma of C(0, alpha) estimate 1}, we know

$$\int_{B_r}|\nabla s_t|^2 \leq Cr^{1 + 2\alpha}. $$

Let $s_t = g + h $, where $\Delta h = 0$ in $B_r$ and $h = s_t$ on $\partial B_r$, $\Delta g = \Delta s_t$ in $B_r$ and $g = 0$ on $\partial B_r$.\\

Use $(\widebar{s_t})_{r}$ to represent the average of $s_t$ over the ball $B_r$. That is to say,
$$(\widebar{s_t})_{r} := \dfrac{1}{\text{Vol}(B_r)} \int_{B_r} s_t. $$
Similar definitions for $\bar{g}_r$ and $\bar{h}_r$ etc..\\

Let $f(r) : = \displaystyle \int_{B_r} |\nabla s_t - (\widebar{\nabla s_t})_r|^2$. Using theorem \ref{theorem D2} in the appendix \ref{Appendix: Morrey-Camponato spaces and inequalities}, it suffices to prove that $f(r) \leq Cr^{3 + 2\alpha'}$ for some possibly smaller $0 < \alpha' \leq \alpha$.\\

Note that we already have

$$f(r) \leq Cr^{1 + 2\alpha}. $$

By lemma \ref{lemma D3} in the appendix \ref{Appendix: Morrey-Camponato spaces and inequalities}, for any $0 < r_1 < r$,

$$\dfrac{1}{r_1^3} \int_{B_{r_1}}|\nabla h - \widebar{\nabla h}_{r_1}|^2 \leq C (\dfrac{r_1}{r})^2 \cdot \dfrac{1}{r^3} \int_{B_{r}}|\nabla h - \widebar{\nabla h}_r|^2. $$

On the other hand,
$$\int_{B_r} |\nabla g|^2 = \int_{B_r}<\Delta g, g> ~ \leq ~ C(\int_{B_r} |g| (1 + |\nabla s_t|^2).  $$

Since $||s_t||_{C^{0, \alpha}} \leq C$, we have on the entire $B_r$,
$$|s_t - (\widebar{s_t})_{r}| \leq Cr^{\alpha}. $$

By maximal principle on $B_r$, we have
$$|h - (\widebar{s_t})_{r}| \leq Cr^{\alpha}.$$

So $$|g| = |s_t - h| \leq Cr^{\alpha}. $$

Thus

$$\int_{B_r} |\nabla g|^2 \leq C r^{\alpha} \int_{B_r} |\nabla s_t|^2 \leq Cr^{1 + 3\alpha}. $$

We know for $r \leq R$,

$$f(r) =  \displaystyle \int_{B_{r}} |\nabla s_t - (\widebar{\nabla s_t})_{r}|^2 \leq \int_{B_{r}}|\nabla h - \widebar{\nabla h}_{r}|^2 + \int_{B_{r}}  |\nabla g|^2  $$
$$\leq C (\dfrac{r}{R})^5 f(R) + Cr^{1 + 3\alpha}. $$

If we fix $R = 1$, then this implies
$$f(r) \leq C r^{1 + 3\alpha}. $$

So the Campanato norm $\mathcal{L}^{2, 1+3\alpha}$ of $\nabla s_t$ over $B_1$ is bounded above by $C$.\\

By theorem \ref{theorem D2} in the appendix \ref{Appendix: Morrey-Camponato spaces and inequalities}, this implies that
$$\int_{B_r}|\nabla s_t|^2 \leq C \max\{ r^{1 + 3\alpha}, r^{3}\}. $$

We may use the smaller power between $r^{1 + 3\alpha}$ and $r^3$ as the new starting point and re-run the argument. After finitely many times of iterations, we get for some possibly smaller $\alpha'$,
$$f(r) \leq C r^{3 + 2\alpha'}. $$
Then by theorem \ref{theorem D2} in the appendix \ref{Appendix: Morrey-Camponato spaces and inequalities}, we know that
$$||\nabla s_t ||_{C^{0, \alpha'}} \leq C. $$

\end{proof}

\paragraph{The weighted $C^{1, \alpha}_{\text{ie}}$ estimates near boundaries}(an adapted version of Hilderbrandt's estimate)~\\

Near the $y = 0$ boundary (but away from the $r = 0$ boundary), we have

$$\Delta s_t = A + B(\nabla s_t) + C(\nabla s_t \otimes s_t), $$
where only $C$ is bounded, and $A = O(\dfrac{1}{y^2})$, $B = O(\dfrac{1}{y})$. The fact that $A$, $B$ are not bounded is because $|\Psi_0|$ is not bounded there.\\

Thus on a ball $B$ of radius $\dfrac{y(q)}{4}$ centered at $y(q)$, we want to re-run the proof of lemma \ref{Lemma of interior C(1, alpha) estimate}.\\

To start, we have

$$\int_{B_r}|\nabla s_t|^2 \leq Cy(q)^{\frac{1}{2}}r^{1 + 2\alpha}. $$

Let $R = \dfrac{y(q)}{4}$. And the same argument as in the proof of lemma \ref{Lemma of interior C(1, alpha) estimate} leads to (we still have the same definition of $f(r)$)

$$f(r) \leq C (\dfrac{r}{R})^5f(R) + C(y(q))^{\frac{1}{2}}r^{1 + 3\alpha} \leq C y(q)^{\frac{1}{2} - \alpha} r^{1 + 3\alpha}. $$

As long as the exponent of $r$ is less or equal than $3$ we can run the same argument and gain an extra $y(q)^{-\alpha} r^{\alpha}$ factor. After finitely many times of iterations, this will lead to

$$f(r) \leq C y(q)^{- \frac{3}{2}}r^{3 + 2\alpha'}, $$
for some possibly smaller $\alpha'$.\\

This implies the $y(q)^{\frac{1}{2}} C^{0, \alpha'}$ norm of $y(q) \nabla s_t$  is bounded above by $C$ on this ball. Note that $C$ doesn't depend on the ball. So we conclude that, in a region near the $y = 0$ boundary away from $r = 0$ boundary and for possibly smaller $\alpha$, we have
$$||s_t||_{y^{\frac{1}{2}} C^{1, \alpha}_{ie}} \leq C. $$

Even when approaching the $r = 0$ boundary, the argument doesn't need to change and we get locally

$$||s_t||_{r^{\frac{1}{2}}\psi^{\frac{1}{2}} C^{1, \alpha}_{ie}} \leq C. $$

On the other hand, near the $\rho \rightarrow +\infty$ boundary, we have

$$\Delta \rho^l s_t = A + B(\nabla^l \rho s_t) + C(\nabla^l \rho s_t \otimes \rho^l s_t), $$
where $A, B, C$ are bounded, $l = 1$ if $t \in [0, 1]$ and $l$ can be any integer if $t \geq t_0 > 0$ with the bounds of $A, B, C$ depends on $t_0$ and $l$.\\

So examine the proof of lemma \ref{Lemma of interior C(1, alpha) estimate}, we get:
    $$||s_t||_{\mathcal{X}^{1, \alpha}_{1/2, 1/2, -1}} \leq C, $$
where $C$ depends only on $\Psi_0$. 
When $t \geq t_0 > 0$ and when $C$ is allowed to depend on $t_0$, the positive integer $l$, then

$$||s_t||_{\mathcal{X}^{1, \alpha}_{1/2, 1/2, -l}} \leq C.$$

All the higher weighted $C^{k, \alpha}_{\text{ie}}$ norms with $k \geq 2$ follow by the standard elliptic Holder type interior estimates. So to conclude we have

\begin{proposition}
    There exists an $\alpha \in (0, 1)$  such that
    $$||s_t||_{\mathcal{X}^{k, \alpha}_{1/2, 1/2, -1}} \leq C. $$
Here the constant $C$ depends only on $\Psi_0$ and the positive integer $k$. 
When $t \geq t_0 > 0$ and we assume $C$ depends only on $t_0$, $\Psi_0$, positive integers $k$ and $l$, we have stronger estimate:

$$||s_t||_{\mathcal{X}^{k, \alpha}_{1/2, 1/2, -l}} \leq C.$$
\end{proposition}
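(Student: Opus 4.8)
The plan is to obtain the higher-order bounds by induction on $k$, starting from the $C^{1,\alpha}$ estimates just established and gaining one derivative at a time through interior Schauder estimates, run separately in the three regimes (away from all boundaries; near the $y=0$ boundary and the corners; near $\rho\to+\infty$) exactly as in the $C^{1,\alpha}$ argument above. The base case $k=1$ is the preceding proposition, and the present statement simply elaborates the ``standard elliptic Holder type interior estimates'' invoked in the text just before it.

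For the inductive step, recall from the first fact in subsection~\ref{Subsection: The deformation of a configuration} that the equation $V(\Psi_0,s_t)+ts_t=0$ reads, away from all boundaries,
$$\Delta s_t = A + B(\nabla s_t) + C(\nabla s_t\otimes\nabla s_t),$$
where $A,B,C$ are assembled from the six components of $\Psi_0$ and from analytic matrix functions of $s_t$ (namely $\gamma(\pm s_t)$, $v(-2s_t)$, and the like). Since $|s_t|$ is bounded uniformly by Lemma~\ref{lemma of C infinity estimate 1}, these analytic functions of $s_t$ together with all their $s_t$-derivatives stay bounded; hence on any ball on which $s_t\in C^{m,\alpha}$ one gets $A\in C^{m,\alpha}$ and $B(\nabla s_t),\,C(\nabla s_t\otimes\nabla s_t)\in C^{m-1,\alpha}$, with norms controlled by $\|s_t\|_{C^{m,\alpha}}$ and $\Psi_0$. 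Therefore $\Delta s_t\in C^{m-1,\alpha}$ on a slightly smaller concentric ball, and the interior Schauder estimate upgrades $s_t$ to $C^{m+1,\alpha}$ there, with a bound depending only on the $C^{m,\alpha}$ bound and $\Psi_0$. Iterating from $m=1$ gives interior $C^{k,\alpha}$ control for every $k$.

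Near the $y=0$ boundary (and likewise near the type III boundary and the corners, where $y$ is replaced by $r\psi$) the same argument is run on balls $B$ of radius $\frac14 y(q)$ centred at $q$, after rescaling to the unit ball by $x\mapsto q+y(q)x$. As in the $C^{1,\alpha}$ step, $A=O(y^{-2})$, $B=O(y^{-1})$ while $|s_t|=O(y^{1/2})$ by Lemma~\ref{lemma of C infinity estimate 2}, so the rescaled equation is uniformly elliptic on the unit ball with uniformly bounded data once the weight $y^{1/2}$ is factored out; the interior Schauder estimates then apply with constants independent of $q$, and unwinding the rescaling produces the weighted bound $\|s_t\|_{y^{1/2}C^{k,\alpha}_{\text{ie}}}\le C$, and $\|s_t\|_{\psi^{1/2}r^{1/2}C^{k,\alpha}_{\text{ie}}}\le C$ near the corners. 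Near $\rho\to+\infty$ one works instead with $\rho^l s_t$, which satisfies an equation of the same shape with bounded coefficients (here $l=1$ when only $t\in[0,1]$ is assumed, and $l$ arbitrary when $t\ge t_0>0$, exactly as in Lemma~\ref{lemma of C infinity estimate 1} and the propositions above); the same bootstrapping yields $\rho^{-l}C^{k,\alpha}$ control. Patching the three regimes together gives $\|s_t\|_{\mathcal{X}^{k,\alpha}_{1/2,1/2,-1}}\le C$ for $t\in[0,1]$ and $\|s_t\|_{\mathcal{X}^{k,\alpha}_{1/2,1/2,-l}}\le C$ for $t\ge t_0>0$, with $C$ depending only on $\Psi_0$, $k$ (and $t_0,l$ in the second case).

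The genuinely substantive work has already been done in the $L^\infty$, $C^{0,\alpha}_{\text{ie}}$ and $C^{1,\alpha}_{\text{ie}}$ estimates above; the only points requiring care here are bookkeeping ones: checking that the weighted edge spaces $\mathcal{X}^{k,\alpha}$ are stable under the differentiation used in the Schauder iteration, that differentiating the analytic nonlinearities in $s_t$ does not degrade the weights (automatic, since $|s_t|$ is bounded and the weights carried by $\nabla s_t$ are the natural scale-invariant ones), and that near the corners the product weight $\rho^{-1}\psi^{1/2}r^{1/2}$ (resp. $\rho^{-l}\psi^{1/2}r^{1/2}$) is the one produced consistently by all of the local rescalings. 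I expect the main obstacle, such as it is, to be precisely this uniform bookkeeping across the corners rather than any new analytic input.
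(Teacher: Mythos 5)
Your proposal is correct and follows essentially the same route as the paper: the paper's own proof of this proposition consists of the preceding $L^\infty$, $C^{0,\alpha}_{\text{ie}}$ and weighted $C^{1,\alpha}_{\text{ie}}$ estimates together with a one-sentence appeal to ``standard elliptic Holder type interior estimates'' for $k\geq 2$, and your bootstrap (interior Schauder on unit balls, on rescaled balls of radius comparable to $y(q)$ near the edge boundaries, and for $\rho^l s_t$ near infinity) is exactly the standard argument being invoked there.
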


Note that this proposition is the third and fifth bullets of the facts listed in subsection \ref{Subsection: The continuity argument}.

\subsection{More regularities}\label{Subsection: More regularities}

This subsection proves the second and the fourth bullet of the facts listed in the end of subsection \ref{Subsection: The continuity argument}.\\

Suppose $s$ is a solution for some $t$. We hope to study the operator $L_{s}: = L_{\Psi_s}$. Unfortunately this operator seems to be hard to study. However, suppose $t > 0$. Consider alternatively the operator $L_{s, t} : = L_{s} + t$. Then it behaves much better and can be analysis-ed.\\

In fact, we have the following lemma.

\begin{lemma}
    
Assume $t > 0$ and $s$ in $\mathcal{X}^k_{\frac{1}{2}, \frac{1}{2}, -l}$ (for all positive integers $k, l$) is a solution of
$$V(\Psi_0, s) + ts = 0. $$
Suppose $F \in \mathcal{X}^{k}_{-\frac{3}{2} + \epsilon, -\frac{3}{2} + \epsilon, - l}$ for some small $\epsilon > 0$ and all positive integers $k$ and some large enough $l > 0$. Then there exists a $u \in \mathcal{X}^k_{\frac{1}{2}, \frac{1}{2}, -l}$ for all $k$ such that 
$$L_{s, t}(u) = F. $$
Moreover, the $\mathcal{X}^{k+2}_{\frac{1}{2}, \frac{1}{2}, -l}$ norm of $u$ is bounded above by (a constant times) the $\mathcal{X}^{k}_{-\frac{3}{2}+\epsilon, -\frac{3}{2}+\epsilon, -l}$ norm of $F$.
\end{lemma}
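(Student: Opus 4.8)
The plan is to show that $L_{s,t}=L_{\Psi_s}+t=-\Delta_{\Psi_s}+t$ is an isomorphism between the stated spaces, by establishing that it is Fredholm and then checking that both its kernel and its cokernel vanish; the asserted norm bound for $u$ is then just the operator norm of the inverse. Since $s\in\mathcal X^k_{\frac12,\frac12,-l}$ for all $k,l$, the configuration $\Psi_s=e^s(\Psi_0)$ has the same leading asymptotics as $\Psi_0$: it satisfies the generalized Nahm pole condition (of the $PQ^2$ type) as $y\to0$ and the real symmetry breaking condition as $\rho\to+\infty$, and its six components are bounded away from $y=0$ and are $O(1/y)$ as $y\to0$. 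Hence $y^2L_{s,t}$ is, near $y=0$ and near every corner of $\hat X$, an elliptic operator of (iterated) edge type whose normal/indicial operator is exactly the one analyzed in \cite{Mazzeo2017TheKnots} and \cite{He2020TheII}; the zeroth order term $t$ and the $s$-dependent lower order corrections do not enter the normal operator. Away from $y=0$ it is uniformly elliptic of second order with smooth bounded coefficients, and near $\rho\to+\infty$ it is $-\Delta+(\text{bounded zeroth order})+t$.

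\textbf{Fredholm property.} Near $y=0$ and the corners I will use the edge calculus exactly as it is invoked in subsection~\ref{Subsection: Improve the approximate solution when $y$ is small}: the weight $\tfrac12$ lies strictly above the lowest Fredholm weight for this edge operator (equivalently, in the $y^\delta$-normalization, $\delta=-\tfrac32>-2$), so there is a bounded local solution operator from $\psi^{-3/2+\epsilon}r^{-3/2+\epsilon}C^{k,\alpha}_{\text{ie}}$ into $\psi^{1/2}r^{1/2}C^{k+2,\alpha}_{\text{ie}}$; the extra $\epsilon$ in the source weight is precisely what absorbs the $\log$-terms of the edge solution operator, so that the output lands in the sharp $y^{1/2}$-space rather than only in $y^{1/2-\epsilon}$. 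Near $\rho\to+\infty$ the role of $t>0$ is decisive: $-\Delta_{\Psi_s}$ is formally self-adjoint and non-negative there (pointwise $\langle-\nabla_i^2u,u\rangle=|\nabla_iu|^2\geq0$ and $\langle-[\Phi_i,[\Phi_i,u]],u\rangle=|[\Phi_i,u]|^2\geq0$), so $L_{s,t}$ has $L^2$-spectrum in $[t,+\infty)$ and its resolvent lets solutions of $L_{s,t}u=F$ inherit the polynomial (indeed exponential) decay of $F$, preserving the $\rho^{-l}$-weight. Patching these local parametrices and the interior Schauder estimate with a partition of unity subordinate to the faces of $\hat X$ produces a bounded two-sided parametrix for $L_{s,t}\colon\mathcal X^{k+2}_{\frac12,\frac12,-l}\to\mathcal X^{k}_{-\frac32+\epsilon,-\frac32+\epsilon,-l}$ modulo compact operators; hence $L_{s,t}$ is Fredholm between these spaces.

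\textbf{Triviality of kernel and cokernel, and the bound.} Let $u\in\mathcal X^{k+2}_{\frac12,\frac12,-l}$ with $L_{s,t}u=0$. Linearizing the Weitzenböck identity (the second fact of subsection~\ref{Subsection: The deformation of a configuration}) for $\Psi_s$ in the direction $u$ yields the pointwise identity
$$\langle L_{s,t}u,u\rangle=-\Delta(|u|^2)+2\sum_{i=1}^{3}|D_i^*u|^2+t|u|^2,$$
where $D_i$ are the operators attached to $\Psi_s$. Thus $L_{s,t}u=0$ forces $\Delta(|u|^2)=2\sum_i|D_i^*u|^2+t|u|^2\geq0$, so $|u|^2$ is subharmonic on $X$; since $|u|=O(y^{1/2})\to0$ on the faces of $\hat X$ over $y=0$ and $|u|=O(\rho^{-l})\to0$ on the face $\rho=+\infty$, the maximum principle on a compact exhaustion gives $|u|^2\leq0$, hence $u\equiv0$. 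For the cokernel, $-\Delta_{\Psi_s}+t$ is formally self-adjoint for the $L^2$-pairing, so the cokernel is the kernel of $L_{s,t}$ on the $L^2$-dual weighted space; the target weight $-\tfrac32+\epsilon$ sits strictly below the $L^2$-critical weight, so its dual still decays at $y=0$ and at $\rho=+\infty$, and the same subharmonicity-plus-maximum-principle argument forces that kernel to vanish too (alternatively one quotes the index-zero statement of \cite{Mazzeo2017TheKnots},\cite{He2020TheII}). Hence $L_{s,t}$ is an isomorphism, and
$$\|u\|_{\mathcal X^{k+2}_{\frac12,\frac12,-l}}\leq C\,\|F\|_{\mathcal X^{k}_{-\frac32+\epsilon,-\frac32+\epsilon,-l}}$$
is the norm of the inverse, finite by the open mapping theorem and controlled explicitly by the parametrix together with the weighted interior estimates.

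\textbf{Expected main obstacle.} The routine parts are the interior Schauder estimate, the Schrödinger-type behaviour at $\rho\to+\infty$ coming from $t>0$, and the maximum-principle argument for injectivity. The delicate part is the edge/corner bookkeeping in the Fredholm step: one must confirm that the weight $\tfrac12$ lies in the correct Fredholm window for the edge operator of the $PQ^2$-type Nahm pole, that the ``$+2$'' gain (up to the $\epsilon$ loss from logarithms) genuinely survives at the corners where both $\psi$ and $r$ degenerate, and, on the cokernel side, that the dual of the weight $-\tfrac32+\epsilon$ still decays at $y=0$ — this is exactly where the gap between $-\tfrac32$ and the $L^2$-critical weight, and the freedom to shrink $\epsilon$, are used.
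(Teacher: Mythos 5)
Your overall skeleton (global Fredholmness of $L_{s,t}$ plus vanishing of kernel and cokernel) is genuinely different from the paper's, which never sets up a global Fredholm theory. The paper instead splits $F=\chi_1F+\chi_2F$, solves the piece supported near $y=0$ by quoting only the \emph{local} edge solvability of $L_{s,t}$ above the lower indicial weight (pushing the resulting compactly supported error into the second piece), and then solves for the remaining piece by minimizing the convex Dirichlet-type functional $A(u)=\int_X(\sum_i|\nabla_iu|^2+\sum_i|[\Phi_i,u]|^2+\langle F_2,u\rangle)$ over a Hilbert space $\IH$; existence comes for free from the minimization, and the $\rho^{-l}$ decay is then extracted from a Hardy inequality, a cut-off argument, and the exponential decay of the Green's function of $-\Delta+t$. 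Your injectivity argument via the linearized Weitzenb\"ock identity and subharmonicity of $|u|^2$ is sound (up to constants it is the same inequality the paper uses), and your reading of the role of $t>0$ at the $\rho\to+\infty$ end and of the $\epsilon$-shift in the source weight absorbing the edge logarithms is consistent with the paper.

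The genuine gap is the surjectivity step. You dispose of the cokernel either by a duality heuristic or by ``quoting the index-zero statement of \cite{Mazzeo2017TheKnots}, \cite{He2020TheII}'', but no such global statement is available in those references for this operator: they supply the indicial-root computation and local/model solvability near $y=0$ (and in the setting $|\Psi|\to0$ at infinity), not a Fredholm-index theorem for the global operator on $\IR^2\times\IR^+$ with an edge face at $y=0$, blown-up corners, and a Schr\"odinger-type noncompact end under real symmetry breaking. The duality route is also not actually carried out: weighted H\"older spaces are not reflexive, so identifying the cokernel with the kernel of the formal adjoint at a concrete dual weight requires first setting up the weighted $L^2$/edge-Sobolev framework, proving that a cokernel element is a genuine solution of $L_{s,t}v=0$ with enough edge regularity and pointwise decay (of order $y^{3/2-\epsilon}$) for your maximum principle to apply, and verifying that the errors of your patched parametrix are compact on the noncompact end. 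None of this is routine, and the paper's variational construction of the bulk solution is precisely the device that makes surjectivity automatic; without it, or an equivalent existence mechanism, your argument does not close.
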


\begin{proof}
    We use two cut-off functions $\chi_1, \chi_2$ to divide $F$ into two parts: $F_i = \chi_i F$, $i = 1, 2$. Here $\chi_i$ are all smooth functions with $$\chi_1 + \chi_2 = 1.$$
    We assume $\chi_1$ is supported in a region such that $y < \min\{2, 2R^{-l}\}$. And $\chi_2$ is supported in a region such that either $y > \min\{1, R^{-l}\}$. Clearly we only need to solve $L_{s, t}(u_i) = F_i$ on each region.\\

We first consider $F_1$. We apply Mazzeo's theory of elliptic edge operators (and terminologies) here, which can be found in \cite{Mazzeo1991EllipticI}. We quote theorem 5.8 in \cite{He2020TheII} (where the indicial weights are actually computed in \cite{Mazzeo2017TheKnots}).  In fact, when $y \rightarrow 0$, the operator $L_{s, t}$ has the same normal operator as if $t = 0$ and $\Psi$ is the model solution in their construction. Here is the only fact that we need: The weight $r^{\frac{1}{2}}\psi^{\frac{1}{2}}$ lies in the Fredholm range both near the $r \rightarrow 0$ boundary and the $\psi \rightarrow 0$ boundary (and is compatible with the corners). (Alternatively, readers may compute the Fredholm weight of $L_{s, t}$ directly. It is actually easier than Mazzeo and Witten's computations in \cite{Mazzeo2017TheKnots}, which takes more situations into account.) In particular, near those boundaries,

$$L_{s, t}(u) = F_1$$

has a solution in the support of $\chi_1$ with $u \in r^{\frac{1}{2}}\psi^{\frac{1}{2}}C^{k}_{\text{ie}}$ for all $k$. Here we allow that $u$ is nonzero in a slightly larger region and doesn't satisfy the equation outside of the support of $\chi_1$. But we may add $L_{s, t}(u) - F_1$ (which is supported in a relatively compact region) to $F_2$ there and throw it into the next step.\\

Here is the next step: We solve the equation for $F_2$.  Consider the following functional on $u$, where $u \in \mathcal{X}^k_{\frac{1}{2}, \frac{1}{2}, -l}$ for all $k, l$:

$$A(u): = \int_{X} (\sum\limits_{i = 1, 2, y}|\nabla_i u|^2 + \sum\limits_{i = 1, 2, 3} |[\Phi_i, u]|^2 + <F_2, u>). $$

We may first consider the Banach space $\IH$ defined by completion of smooth compact supported functions using the following norm:

$$||u||_{\IH}^2 = \int_{X} (\sum\limits_{i = 1, 2, y}|\nabla_i u|^2 + \sum\limits_{i = 1, 2, 3} |[\Phi_i, u]|^2). $$

Clearly, $A(u)$ is bounded in $\IH$ and has a Dirichlet minimizor in $\IH$. (Recall that $|F_2| \leq C R^{-l}$ for sufficiently large $l$.) This minimizor is unique because of the convexity of $A(u)$. By a standard elliptic regularity argument, this minimizor, as a week solution of $L_{s, t}(u) = F_2$, is smooth in the interior of $X$. Moreover, there is a Hardy type of inequality for $u \in \IH$:
$$\int_{X} y^{-2} |u|^2 \leq C \int_{X} \sum\limits_{i = 1, 2, y}|\partial_i u|^2 \leq \int_{X} \sum\limits_{i = 1, 2, y} |\nabla_i u|^2 \leq ||u||_{\IH}^2 < +\infty.$$

In particular, the integral of both $|u|$ and $|\nabla u|$ over the region $R < \lambda$ has at most a polynomial growth as $\lambda \rightarrow +\infty$. We have

$$|<L_{t, s}u, u>| = |<F_2, u>| \leq |F_2| |u|.  $$

But
$$<L_{t, s}u, u> = <- \Delta_{\Psi}u + tu, u> = - \dfrac{1}{2}\Delta (|u|^2) + \sum\limits_{i = 1, 2, y} |\nabla_{A_i}u|^2 + \sum\limits_{i = 1, 2, 3} |[\Phi_i, u]|^2 + t|u|^2 $$
$$\geq - \dfrac{1}{2} \Delta(|u|^2) + |\nabla u|^2 + t|u|^2 = - <\Delta u, u> ~~~\geq ~~~ (- \Delta |u| + t|u|)|u|. $$

So we get

$$- \Delta |u| + t|u| \leq |F_2|. $$

Let $\chi_{\lambda}$ be a cut-off function that equals $1$ when $R \leq \lambda$ and equals $0$ when $R \geq 2\lambda$. Moreover, we assume $|\nabla \chi_{\lambda}| = O(R^{-1}),~ |\nabla^2 \chi_{\lambda}| = O(R^{-2}).$ And let $u_{\lambda} := u \chi_{\lambda}$. Then recall the Green's function of $-\Delta + t$ is $G_{t, q}$ (defined in subsection \ref{Subsection: A priori estimates}) and the fact that $u_{\lambda}$ has compact support, we have (centering at any point $q \in X$)
$$|u_{\lambda}(q)| \leq \int_{X} G_{t, q} \Delta |u_{\lambda}| \leq C\int_{\text{support}(\nabla \chi_{\lambda})}G_{t, q} (R^{-2}|u| + R^{-1}|\nabla u|) + \int_{X}G_{t, q} |F_2|). $$
Note that when $\lambda \rightarrow +\infty$, $G_{t, q}$ has exponential decay while the integral of $|u|$ and $|\nabla u|$ on support$(\nabla \chi_{\lambda})$ have at most polynomial growth. So

$$\lim\limits_{\lambda \rightarrow +\infty}\int_{\text{support}(\nabla \chi_{\lambda})}G_{t, q} (R^{-2}|u| + R^{-1}|\nabla u|) = 0.$$

And letting $\lambda \rightarrow +\infty$, 
$$|u(q)| \leq \int_{X} G_{t, q}|F_2|. $$

We may divide $X$ into two parts: Let $B$ be a ball of radius $\dfrac{R(q)}{2}$, where $R(q)$ is the $R$ value of $q$. Then

$$\int_{X} G_{t, q}|F_2| = \int_{X\cap B} G_{t, q}|F_2| + \int_{X\backslash B} G_{t, q}|F_2|. $$

When $R(q) \rightarrow +\infty$, the second integral above decays exponentially in $R(q)$. The first integral is bounded by (recall that $|F_2| \leq CR^{-l}$)

$$CR^{-l}\int_{B\cap X}G_{t, q} \leq CR^{-l}\int_{ X}G_{t, q} \leq CR^{-l}.  $$

So we get 

$$|u| \leq CR^{-l}. $$

Once we have this point-wise bound, then the bound on Holder norms $\rho^{-l}C^{k, \alpha}$ of $u$ follows by a standard elliptic interior argument and are omitted. (It is actually much easier than the analysis in subsection \ref{Subsection: A priori estimates} because we don't have the quadratic term here.)\\

Finally, we add the two different $u$ that we get for $F_1$ and $F_2$ together and finish the proof.

\end{proof}

Suppose for some $t > 0$, we have a solution $s$ with

$$V(\Psi_0, s) + ts = 0.$$
Consider a small variation $\epsilon \alpha$ on top of $s$ and a small variation $-\epsilon$ on top of $t$:

$$V(\Psi_0, s + \epsilon \alpha) + (t - \epsilon) (s + \epsilon \alpha) = \epsilon (L_{s, t}(\alpha) - s) + \epsilon^2 (Q(\alpha) - \alpha).$$

Note that $Q(\alpha) - \alpha$ is a bounded map from $\mathcal{X}_{\frac{1}{2}, \frac{1}{2}, -l}^{k+2, \alpha}$ to $\mathcal{X}^{k, \alpha}_{-\frac{3}{2} + \epsilon, -\frac{3}{2} + \epsilon, - l}$ . So when $\epsilon$ is small enough, by implicit function theorem, the equation
$$L_{s, t}(\alpha) = s + \epsilon(\alpha - Q(\alpha)) $$
has a solution in $\mathcal{X}^k_{\mu, v, -l}$. This proves the second bullet of the facts in subsection \ref{Subsection: The continuity argument}.\\

Suppose when $t > 0$, $s_t$ is a solution in $\mathcal{X}^{k, \alpha}_{\frac{1}{2} - \epsilon, \frac{1}{2} - \epsilon, -l}$. We have
$$V(\Psi_0) + L_{0, t}(s_t) + Q(s_t) = 0,$$
where $Q(s_t)$ actually maps $\mathcal{X}_{\frac{1}{2} - \epsilon, \frac{1}{2}-\epsilon, -l}^{k+2, \alpha}$ into $\mathcal{X}^{k, \alpha}_{-1 - 2\epsilon, -1 - 2\epsilon, - l}$ for sufficiently small $\epsilon > 0$. In particular, $Q(s_t)$ is in $\mathcal{X}^{k, \alpha}_{-\frac{3}{2}, -\frac{3}{2}, - l}$. Then because $\mu \in [\dfrac{1}{2} - \epsilon, \dfrac{1}{2}]$ and $v \in [\dfrac{1}{2} - \epsilon, \dfrac{1}{2}]$ are all Fredholm weights for $L_{0, t}$ near those boundaries, and $V(\Psi_0)$ vanishes up to infinite order at all boundaries. So inductively $s_t$ lies in $\mathcal{X}^{k, \alpha}_{\frac{1}{2}, \frac{1}{2}, -l}$ for all $k, l$. This proves the fourth bullet of the facts in subsection \ref{Subsection: The continuity argument}.

\appendix

\section{The linear algebra}\label{Appendix: The linear algebra}

This appendix summarizes the linear algebras that we use.\\

Suppose $E$ is the trivial $\IC^2$ bundle whose $SL(2, \IC)$ structure is fixed. Since we are working on the Euclidean space, we take the advantage that nearly everything can be represented by matrices. In the following list, all matrices have complex variable items. Note that we use $*$ to represent the usual complex ad-joint (conjugate of the transpose) of a matrix.

\begin{itemize}
    \item An \textbf{$SU(2)$ Hermitian metric} on $E$ is represented by an $2 \times 2$ matrix $H$ such that $H^* = H$, $\det H = 1$, and positive definite. Each such metric gives $E$ an $SU(2)$ structure. (The condition $\det H = 1$ keeps the $SL(2, \IC)$ structure unchanged.) The inner product of two sections $s_1, s_2 \in \Gamma(E)$ (represented by $2$-d vectors with variable coefficients) is
    $$<s_1, s_2>_H = \dfrac{1}{2} \tr (H^{-1}s_1^*Hs_2 + H^{-1}s_2^*Hs_1). $$
    Unless otherwise specified, we typically just use the inner product defined by $H = I$, that is
    $$<s_1, s_2> = \dfrac{1}{2} \tr (s_1^*s_2 + s_2^*s_1) $$
\item Infinitesimal $SL(2, \IC)$ gauge transformations are represented by sections of $sl(2, \IC)$. Infinitesimal $SU(2)$ gauge transformations are represented by sections of $su(2)$. And $sl(2, \IC) = su(2) \oplus isu(2)$, where $isu(2)$ are the Hermitian $sl(2, \IC)$ elements.
\item One useful formula: Suppose $s(t)$ is a differentiable $1-$parameter family of matrices. Then
$$\dfrac{d}{dt} e^{s} = e^{s} ~ \gamma(-s) (\dfrac{ds}{dt}) = \gamma(s) (\dfrac{ds}{dt}) ~ e^{s},$$
where $$\gamma(s) (M) = (\dfrac{e^{ad_s} - 1}{ad_s}) (M) = \sum\limits_{k=0}^{+\infty} \dfrac{(ad_s)^{k}}{(k + 1)!} (M),$$ $$ad_s(M) = [s, M] = sM - Ms.$$
\begin{proof} Let $L_s(M) = sM, R_s(M) = Ms.$ Then
$L_s, R_s, ad_s$ commutes. And $L_s = R_s + ad_s$. Let $C_n^m = \dfrac{n!}{m!(n-m)!}$. Then
    $$\dfrac{d}{dt}(e^s) = 
\dfrac{d}{dt}(\sum\limits_{n = 0}^{+\infty} \dfrac{s^n}{n!}) = \sum\limits_{n = 0}^{+\infty} \dfrac{1}{n!}(\sum\limits_{k = 0}^{n-1} s^{n-1-k} (\dfrac{ds}{dt}) s^{k}) = \sum\limits_{n = 0}^{+\infty} \dfrac{1}{n!}(\sum\limits_{k = 0}^{n-1} R_s^k L_s^{n-1-k} (\dfrac{ds}{dt}))  $$
$$~~~~~~~~~~~~~~~~~~~~= \sum\limits_{n = 0}^{+\infty} \sum\limits_{k = 0}^{n-1} \dfrac{1}{n!} \sum\limits_{l=0}^{n - 1 - k} C_{n-1-k}^l(R_s^{l+k} (ad_s)^{n - 1 - k - l}) (\dfrac{ds}{dt}) $$
$$(\text{let} ~~ v = l+k) ~~ = \sum\limits_{n = 0}^{+\infty} \dfrac{1}{n!} \sum\limits_{v = 0}^{n-1} (\sum\limits_{l = 0}^v C_{n-1-v + l}^l) R_s^v (ad_s)^{n - 1 - v} (\dfrac{ds}{dt})   $$
$$ = \sum\limits_{n = 0}^{+\infty} \dfrac{1}{n!} \sum\limits_{v = 0}^{n-1} C_n^{v} R_s^v (ad_s)^{n - 1 - v} (\dfrac{ds}{dt})   $$
$$(\text{let} ~~ j = n - 1 - v) ~~ = (\sum\limits_{v = 0}^{+\infty} \dfrac{1}{v!} R_s^v) (\sum\limits_{j = 0}^{+\infty} \dfrac{1}{(j+1)!} (ad_s)^j) (\dfrac{ds}{dt}) = \gamma(s) (\dfrac{ds}{dt}) ~~ e^{s}.$$
The other identity can be derived in the same way using $R_s = L_s + ad_{-s}$.
\end{proof}

\item Another useful formula: Suppose $s, M$ are two $2 \times 2$ matrices. Then
$$e^{-s}Me^s = e^{ad_{-s}}(M) = M + (\dfrac{e^{ad_{-s}} - 1}{ad_{-s}}) (ad_{-s}M) = M + \gamma(-s) ([M, s]). $$

\begin{proof}
    This is straightforward: 
$$e^{-s}M e^s = \sum\limits_{k,l=1}^{+\infty} \dfrac{1}{k!l!}(-1)^k (L_s)^k(R_s)^l M = \sum\limits_{v = 0}^{+\infty} \sum\limits_{k = 0}^v (-1)^k(L_s)^k(R_s)^{v-k} M $$
$$ = \sum\limits_{v = 0}^{+\infty} \dfrac{1}{v!} C_v^{k}(-1)^k(L_s)^k(R_s)^{v-k}M = \sum\limits_{v = 0}^{+\infty} \dfrac{1}{v!} (R_s - L_s)^vM = e^{ad_{-s}}M.$$
\end{proof}

\item Suppose $s$ is Hermitian. Then the operator $\gamma(s) = (\dfrac{e^{ad_s}- 1}{ad_s})$ has a square root:
$$v(s) = \sqrt{\gamma(s)} = \sqrt{\dfrac{e^{ad_s} - 1}{ad_s}}. $$
In fact, consider the function $f_p(x) = (\dfrac{e^x - 1}{x})^p = \exp(p \cdot \ln (\dfrac{e^{x} - 1}{x}))$, where $p$ is any real number. Clearly $f_p(x)$ is a real analytical function over $x \in \IR$. It has a convergent Taylor series expansion at any point. On the other hand, when $s$ is Hermitian, $ad_{s}$ is also a self-adjoint operator. So it is diagonalizable  and has real eigen-values.  In particular, when $p = \dfrac{1}{2}$, we use this expansion to define $$v(s) = f_{1/2}(ad_s).$$ 
\end{itemize}

\section{Compactification of $X$}\label{Appendix: Compactification of $X$}

This appendix gives a preferred way to compactify $X$ as a manifold with boundaries and corners $\hat{X}$. In this paper, we have always assumed that $X$ is compactified in this way whenever we work near one of the boundaries/corners and a compactification is needed.\\

There are three types of boundaries of the compactification of $X$. We call them ``\textbf{type I, II, III boundaries}" respectively. Type I and type II boundaries intersect at a \textbf{type A corner}. Type II and type III boundaries interest at a \textbf{type B corner}. Type I and type III boundaries do not intersect.\\

Here are the definitions and local coordinates.

\paragraph{Type I boundary ($\rho \rightarrow  + \infty$, or equivalently $R \rightarrow +\infty$)}~\\

Recall that $R^2 = |z|^2 + y^2$. Here is the definition of $\rho$: It is a smooth function from $X$ to $[1, +\infty)$ that equals $1$ when $R$ is small and equals $R$ when $R$ is large.\\

So $\rho = +\infty$ (or equivalently $\dfrac{1}{\rho} = 0$) defines a boundary for the compatification $X$. This is the type I boundary.

\paragraph{Type III boundaries ($r = 0$)}~\\

Here is the definition of $r$: It is a smooth function from $X$ to $(0, 1]$.\\

Suppose $z_0$ is any root of $P(z)$ (which corresponds to a ``knotted point" at $z = z_0, y = 0$ of the generalized Nahm pole boundary condition). When both $y$ and $|z - z_0|$ are small, we require $r^2 = |z - z_0|^2 + y^2$. When either $y$ is large or $z$ is away from all roots of $P(z)$, we require that $r = 1$.\\

Then $r = 0$ defines the type III boundaries of the compatification of $X$. Note that we have blowed up at $(z_0, 0)$ for each root $z_0$ of $P(z)$.

\paragraph{Type II boundary ($\psi = 0$, or when $\rho = r = 1$ equivalently $y = 0$ there)}~\\

Here is the definition of $\psi$: It is a smooth function from $X$ to $(0, 1]$.\\

Away from the type I and type III boundaries (say, $r = \rho = 1$), when $y$ is small, we require $\psi = y$. When $y$ is large, we require $\psi = 1$.\\

Near a type I boundary, if $\dfrac{y}{R}$ is small, then we require $\psi = \dfrac{y}{R}$.  When $\dfrac{y}{R}$ is close to $1$, we require that $\psi = 1$.\\

Near a type III boundary, we require $\psi = \dfrac{y}{r}$.\\

Note that effectively, away from other boundaries, $\psi = 0$ and $y = 0$ define the same boundary. But we  use $\psi$ instead of $y$ because it is also compactible with other boundaries.

\paragraph{Type A and type B corners}~\\

Type A corners are given by $\psi = 0, \rho = +\infty$ (or equivalently $\psi = \dfrac{1}{\rho} = 0$). And type B corners are given by $\psi = r = 0$.\\

\paragraph{A remark on the coordinates:}

When $\rho$ is large, since $\rho = R$, we may freely choose to use either $R$ or $\rho$ there for the same meaning. But usually we use $\rho$ if we want to emphasize that it equals $1$ (not arbitrarily small) when $R \rightarrow 0$.\\

When $r$ is not too small and $\rho$ is not too large and when $y$ is small, $y$ and $\psi$ can bound each other. So  they are also interchangeable there in analysis.\\

% \subsection{Poly-homogeneous expansion}

% A function on $X$ is said be ``poly-homogeneous" if it has a poly-homegeneous expansion at all boundaries and corners. Here is a more precise definition:

% \begin{definition}
%     Suppose $f$ is a function on $X$. We say that it is poly-homogeneous if for any triple of real numbers $(N_1, N_2, N_3)$ and any $\epsilon > 0$, 

% $$f = \sum\limits_{\substack{n_1 \leq N_1\\ n_2\leq N_2\\ n_3 \leq N_3}} \sum\limits_{k_1 = 0}^{K_1}\sum\limits_{k_2 = 0}^{K_2} \sum\limits_{k_3 = 0}^{K_3}  f_{n_1, n_2, n_3}^{k_1, k_2, k_3} ~  r^{n_1} \psi^{n_2} \rho^{-n_3} (\ln r)^{k_1}(\ln \psi)^{k_2} (\ln \rho)^{k_3} + O(r^{N_1 + \epsilon} \psi^{N_2 + \epsilon} \rho^{-N_3 - \epsilon}). $$
% where $n_1, n_2, n_3$ take values on finite sets no greater than $N_1, N_2, N_3$ respectively. And for each possible choice of $(n_1, n_2, n_3)$,  $(K_1, K_2, K_3)$ is a triple of non-negative integers. All $f_{n_1, n_2, n_3}^{k_1, k_2, k_3}$ here are bounded functions on the compactification of $X$.

% \end{definition}

% In fact, the solution that we get in this paper may not actually have a poly-homoegenous expansion like that.

\section{Weighted Holder spaces of (iterated) edge type}\label{Appendix: Weighted Holder spaces of (iterated) edge type}

This appendix defines the Banach spaces $\mathcal{X}^{k, \alpha}_{\mu, v, l}$, where $\alpha \in (0, 1)$, $k$ is a non-negative integer, $\mu, v, l$ are real numbers. These spaces are standard in the aspect of Mazzeo's micro-local analysis theory (see \cite{Mazzeo1991EllipticI}). They've also occurred in \cite{Mazzeo2017TheKnots}, \cite{He2020TheII} and \cite{Dimakis2022TheField}. (For the sake of convenience, the descriptions here may be modified compared to other literature in a non-essential way.)

\subsection{The Holder spaces of (iterated) edge type}

Suppose $B$ is a ball in $X$ far away from any boundary/corner (the distance to any boundary/corner is at least $1$). Then we define the Holder spaces $C^{k, \alpha}(B)$ over $B$, where $k$ is a non-negative integer and $\alpha \in (0, 1)$.  This space is given by the norm:

$$||u||_{C^{k, \alpha} (B)} : =  \sum\limits_{j = 0}^{k}||\nabla^j u||_{L^{\infty}(B)} + [\nabla^k u ]_{C^{0, \alpha}(B)}, $$
where
$$[u]_{C^{0, \alpha}(B)} : = \sup\limits_{p, q \in {B}, p \neq q} \dfrac{|u(p) - u(q)|}{|p - q|^{\alpha}}. $$

In a region far away from type II ($\psi = 0$) and type III ($r = 0$) boundaries, we take the supreme of all balls of radius $1$ of the above norm.\\

The operator that we want to study is $L_{s, t}$ which is introduced in subsection \ref{Subsection: More regularities}, where $0 < t \leq 1$.  This operator is of the ``degenerate elliptic of (iterated) edge type" near a type II or a type III boundary as studied in \cite{Mazzeo2017TheKnots}, \cite{He2020TheII} and \cite{Dimakis2022TheField}. It is standard to modify the Holder spaces near those boundaries. The modified version will be denoted as $[\cdot]_{C^{0, \alpha}_{\text{ie}}}$.\\

We take the boundary $r = 0$ as an example. One way to think about the modification is to re-define the distance between two points $p$ and $q$ near the boundary to make it re-scaling invariant under a re-scaling $r \rightarrow \lambda r$. This is done by modifying the metric near the boundary. In a direction that is perpendicular with the $\partial_{\psi}$ direction, there is nothing need to be changed. But in the $\partial_{r}$ direction, the metric should be $\dfrac{1}{r^2} dr^2$ instead of $dr^2$. Thus the distance between a point at $r_1$ and a point at $r_2$ (with all other perpendicular coordinates the same) is given by

$$|\int_{r_1}^{r_2} \dfrac{1}{r} dr | = |\ln(r_1) - \ln(r_2) |,$$

which is clearly re-scaling invariant.\\

Another equivalent way to do the modification is to define it on each ball $B$ of radius $\dfrac{r_0}{2}$ whose center has an $r$-value $r_0$. On this ball, the $[\cdot] _{C_{\text{ie}}^{0, \alpha}}$ of $u$ should be given by

$$\sup\limits_{p, q \in B, p \neq q} \dfrac{r_0^{\alpha}|u(p) - u(q)| }{|p - q|^{\alpha}}. $$

Note that the weight $r_0^{\alpha}$ here works equivalently as if the metric is re-scaled in this ball. (They bound each other in a way that doesn't depend on $r_0$ near the boundary.)\\

When it comes to the $\psi = 0$ boundary but away from the corner (or equivalently, $y = 0$ when $r$ is not too small and $|z|$ is not too large), things are slightly different. The operator $L_{\Psi} + t$ (strictly speaking, its $y^2(L_{\Psi} + t)$) has leading order terms which are made from combination of compositions of  $y\partial_y, y\partial_1, y\partial_2$. So the re-scaling should be made in both $y$ and $z$ direction. And the metric should be addapted to be $\dfrac{1}{y^2}(dy^2 + dx_1^2 + dx_2^2)$ near the boundary.\\

Similarly, an equivalent way is to define it on each ball $B$ of radius $\dfrac{y_0}{2}$ centered at a point whose $y$ value is $y_0$. On this ball $B$, the $[\cdot]_{C^{0, \alpha}_{\text{ie}}}$ of $u$ is given by

$$\sup\limits_{p, q \in B, p \neq q} \dfrac{y_0^{\alpha}|u(p) - u(q)| }{|p - q|^{\alpha}}. $$

Near the corner, the metric is adjusted so it is dual re-scaling invariant in two directions that corresponds to the two boundaries. \\

Note that we do not need to modify the Holder norm when $\rho \rightarrow +\infty$ (type I boundary). Because the operator $L_{\Psi} + t$ is not of the ``degenerate elliptic of edge type" near this boundary.\\

For the higher Holder spaces, in the definition of $C^{k, \alpha}_{\text{ie}}$, near $r = 0$ boundary, we need to replace $\partial_r u$ in $\nabla u$ by $r \partial_r u$. And near $\psi = 0$ boundary (or $y = 0$ boundary but away from $r = 0$ boundary), we need to replace $\nabla u$ by $y \nabla u$. This corresponds to the edge structure of the operator $L_{\Psi} + t$ that we study. Nothing needs to be adjusted when $\rho \rightarrow +\infty$.

\subsection{The weighted Holder spaces}

In order to be suitable for the operator $L_{s, t}$ (occurred in subsection \ref{Subsection: More regularities}) to map between, we need to add wights near boundaries/corners of the aforementioned Holder spaces $C^{k, \alpha}_{\text{ie}}$. Here is the definition:

$$\mathcal{X}^{k, \alpha}_{\mu, v, \delta} : = \psi^{\mu}r^{v}\rho^{\delta} C^{k, \alpha}_{\text{ie}} = \{\psi^{\mu}r^{v}\rho^{\delta}u ~ | u \in C^{k, \alpha}_{\text{ie}}\}.$$

Here are several remarks:

\begin{itemize}
    \item If $u$ is an element in $\mathcal{X}^{k, \alpha}_{\mu, v, \delta}$ for some $\alpha \in (0, 1)$ and all positive integers $k$, then $u$ is also in $\mathcal{X}^{k, \alpha'}_{\mu, v, \delta}$ for any other $\alpha \in (0, 1)$. If this is the case, then the concrete $\alpha$ doesn't matter and we may simply use $\mathbf{X}^k_{\mu, v, \delta}$ to represent it.
    \item Although all these norms and spaces are defined for functions, frequently we use them on sections of trivial bundles. The difference is only tautological so we don't emphasize it.
    \item If $v = \mu$, then 
    $\psi^{\mu} r^{\mu} C^{k, \alpha}_{\text{ie}}$ is actually the same space as $y^{\mu} C^{k, \alpha}_{0}$, where we treat $y = 0$ as a single boundary (without the blow-ups at each $r = 0$ point like what we did in appendix \ref{Appendix: Compactification of $X$}), and the subscript ``0" means this is the ordinary edge type Holder norm as $y \rightarrow 0$, not iterated edge type.
\end{itemize}

\section{Morrey-Camponato spaces and inequalities} \label{Appendix: Morrey-Camponato spaces and inequalities}

The Morrey-Camponato spaces and their embedding inequalities are standard in analysis. We only state what we need. We always assume $B$ is a ball of radius $R$ whose closure is in the interior of $X$.\\

Note that although the spaces and inequalities are stated for functions, they work the same tautologically for sections of trivial vector bundles. So we don't emphasize the difference.

\begin{definition}\label{Definition D1}
   Suppose $u$ is a function in $X$ and $\lambda$ is a real number. Let $B_r(x)$ be the ball of radius $r \leq R$ centered at $x \in B$. Then the Morrey norm of $u$ is defined to be
$$||u||_{L^{2, \lambda}(B)} = (\sup\limits_{B_r(x)} r^{-\lambda} (\int_{B_r(x) \cap B}|u|^2))^{\frac{1}{2}}. $$
The Camponato semi-norm is
$$[u]_{\mathcal{L}^{2, \lambda}(B)} = (\sup\limits_{B_r(x)} r^{-\lambda} (\int_{B_r(x) \cap B}|u - \bar{u}_{r, x}|^2))^{\frac{1}{2}},$$
where $\bar{u}_{r, x}$ is the average of $u$ in the ball $B_r(x)$, that is to say
$$\bar{u}_{r,x} := \dfrac{1}{\text{Vol}(B_r(x) \cap B)} \int_{B_r(x) \cap B} u.  $$

The Camponato norm is

$$||u||_{\mathcal{L}^{2, \lambda}(B)} = (\dfrac{1}{\text{Vol}(B)}\int_{B } |u|^2)^{\frac{1}{2}} + [u]_{\mathcal{L}^{2, \lambda}(B)}. $$

\end{definition}

We have some embedding theorems between different normed spaces. These are all standard in modern analysis so we omit the proofs. Here they are:

\begin{theorem}\label{theorem D2}  Suppose $\lambda > 0$ is a real number. There is a constant $C > 0$ which depends on $\alpha$. Suppose $u$ is a function in $X$. Then
 \begin{itemize}
      \item If $0 < \lambda < 1$, then $[u]_{C^{0, \lambda}(B)} \leq C ||\nabla u||_{L^{2, 1+2\lambda}(B)}$, where $C^{0, \lambda}$ is the Holder (semi-)norm.
      \item $[u]_{\mathcal{L}^{2, \lambda + 2}(B)} \leq C ||\nabla u||_{L^{2, \lambda}(B)}$.
      \item If $\lambda \leq 3$, then
      $||u||_{L^{2, \lambda}(B)} \leq C ||u||_{\mathcal{L}^{2, \lambda}(B)}$ 
\end{itemize} 
 
\end{theorem}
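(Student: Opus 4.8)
\textbf{Proof proposal for the final theorem (Theorem \ref{theorem D2}).}

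The plan is to treat each of the three bullets in turn, all of which are classical Morrey--Campanato embedding facts; since the statement already says ``we omit the proofs,'' my goal here is merely to indicate the standard route. First I would fix notation: work on a fixed ball $B$ of radius $R$ whose closure lies in the interior of $X$, so that all balls involved are honest Euclidean balls in $\IR^3$ and the dimension $n=3$ enters the exponents. For the second bullet, the Campanato estimate $[u]_{\mathcal{L}^{2,\lambda+2}(B)}\le C\|\nabla u\|_{L^{2,\lambda}(B)}$, the key step is the Poincar\'e inequality on each ball $B_r(x)$: one has $\int_{B_r(x)\cap B}|u-\bar u_{r,x}|^2\le C r^2\int_{B_r(x)\cap B}|\nabla u|^2$, and then multiplying by $r^{-(\lambda+2)}$ and taking the supremum over $r\le R$ and $x\in B$ gives exactly the claim. (Near $\partial B$ one uses that $B$ is, say, a ball so it is an extension domain, hence the Poincar\'e inequality survives on $B_r(x)\cap B$ with a uniform constant.)

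For the first bullet, the Morrey--Campanato embedding $[u]_{C^{0,\lambda}(B)}\le C\|\nabla u\|_{L^{2,1+2\lambda}(B)}$ for $0<\lambda<1$, I would argue as follows. Combine the second bullet with exponent $\lambda'=1+2\lambda-2=2\lambda-1$... rather, directly: the hypothesis $\|\nabla u\|_{L^{2,1+2\lambda}}<\infty$ gives, via Poincar\'e again, $\int_{B_r(x)}|u-\bar u_{r,x}|^2\le C r^2\cdot r^{1+2\lambda}=Cr^{3+2\lambda}$, i.e. $u\in\mathcal{L}^{2,3+2\lambda}(B)$. The standard Campanato theorem in dimension $n=3$ states that $\mathcal{L}^{2,n+2\lambda}(B)=C^{0,\lambda}(B)$ with equivalence of norms precisely when $n<n+2\lambda<n+2$, i.e. $0<\lambda<1$; here $n+2\lambda=3+2\lambda$, so this applies. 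The proof of that theorem is the classical telescoping argument: for $x$ fixed, $|\bar u_{2^{-k}R,x}-\bar u_{2^{-k-1}R,x}|\lesssim (2^{-k}R)^{\lambda}[u]_{\mathcal{L}^{2,3+2\lambda}}$, so the averages converge to a H\"older-continuous representative, and comparing averages at two nearby points $x,y$ at scale $r\sim|x-y|$ yields the H\"older bound. For the third bullet, $\|u\|_{L^{2,\lambda}(B)}\le C\|u\|_{\mathcal{L}^{2,\lambda}(B)}$ when $\lambda\le 3$, one splits $u=(u-\bar u_{r,x})+\bar u_{r,x}$ on each $B_r(x)$, bounds the first piece by $r^{\lambda/2}[u]_{\mathcal{L}^{2,\lambda}}$, and bounds $\int_{B_r(x)}|\bar u_{r,x}|^2\le C r^3\,|\bar u_{r,x}|^2\le C r^{3-n}\cdot(\text{mass of }u\text{ on }B_r)\le\dots$; using $r\le R$ and $\lambda\le 3=n$ one absorbs the $r^3$ against $r^{\lambda}$ and picks up the averaged $L^2$ term $\frac{1}{\mathrm{Vol}(B)}\int_B|u|^2$, which is exactly the first summand in $\|u\|_{\mathcal{L}^{2,\lambda}(B)}$.

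The only mildly delicate point — and the one I would expect to cost the most care if one were to write it in full — is the uniformity of the Poincar\'e constant and of the telescoping estimate \emph{up to the boundary} $\partial B$, since the truncated balls $B_r(x)\cap B$ are not themselves balls; this is handled by noting $B$ is a bounded Lipschitz (indeed smooth convex) domain, hence an $(\varepsilon,\delta)$-extension domain, so the interior inequalities transfer with a constant depending only on $B$ and $\lambda$. Everything else is a routine rescaling/telescoping computation of the type found in Giaquinta or Gilbarg--Trudinger, which is why it is legitimate to state the theorem without proof in the body of the paper.
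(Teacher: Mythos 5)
The paper does not actually prove this theorem: it explicitly states ``These are all standard in modern analysis so we omit the proofs,'' so there is no argument in the text to compare yours against. Judged on its own, your sketch follows exactly the classical Morrey--Campanato route one finds in Giaquinta or Han--Lin: Poincar\'e on $B_r(x)\cap B$ for the second bullet, the Campanato isomorphism $\mathcal{L}^{2,n+2\lambda}\cong C^{0,\lambda}$ (for $n<n+2\lambda<n+2$, i.e.\ $0<\lambda<1$ with $n=3$) via dyadic telescoping of averages for the first, and the split $u=(u-\bar u_{r,x})+\bar u_{r,x}$ for the third. The exponents all check out, and your remark that the Poincar\'e constant is uniform on the truncated balls is correctly resolved (convexity of $B_r(x)\cap B$ together with $|B_r(x)\cap B|\gtrsim r^3$ for $x\in B$ is the cleanest way to see it; the extension-domain language is overkill but not wrong).

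The one step that is genuinely loose is the third bullet. As written, you bound $\int_{B_r(x)\cap B}|\bar u_{r,x}|^2\le |B_r(x)\cap B|\,|\bar u_{r,x}|^2\le\int_{B_r(x)\cap B}|u|^2$ by Cauchy--Schwarz, but after multiplying by $r^{-\lambda}$ this is circular and gives nothing as $r\to 0$. The correct argument reuses the telescoping you already invoked for the first bullet: $|\bar u_{2^{-k}R,x}-\bar u_{2^{-k-1}R,x}|\lesssim (2^{-k}R)^{(\lambda-3)/2}[u]_{\mathcal{L}^{2,\lambda}(B)}$, so summing from scale $R$ down to scale $r$ gives $|\bar u_{r,x}|^2\lesssim r^{\lambda-3}[u]_{\mathcal{L}^{2,\lambda}(B)}^2+\frac{1}{\mathrm{Vol}(B)}\int_B|u|^2$, and then $r^{3}\,|\bar u_{r,x}|^2\lesssim r^{\lambda}\bigl([u]_{\mathcal{L}^{2,\lambda}(B)}^2+R^{3-\lambda}\cdot\frac{1}{\mathrm{Vol}(B)}\int_B|u|^2\bigr)$; the hypothesis $\lambda\le 3$ enters precisely to absorb $r^{3}$ into $r^{\lambda}$ in this last line. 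With that repair the sketch is a complete and correct outline of the standard proof, and it is legitimate for the paper to cite it without proof.
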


Note that under a re-scaling, the two sides of all the inequalities scale in the same way. So the constant $C$ doesn't depend on the radius of the ball.\\

There is another inequality which is standard in analysis
\begin{lemma}\label{lemma D3} (Lemma 10.3.1 in \cite{Han2011EllipticLin})
We use the same notations as in the definition \ref{Definition D1}.
    Suppose $u$ is a smooth function on $B$ such that
    $$\Delta u = 0.$$
    Suppose $x$ is the center of the ball $B$. Then there is a constant $C$ such that for any $0 < r_1 < r_2 < R$
 $$\int_{B_{r_1}} |u - \bar{u}_{x, r_1}|^2 \leq C (\dfrac{r_1}{r_2})^5 \int_{B_{r_2}} |u - \bar{u}_{x, r_2}|^2.$$
\end{lemma}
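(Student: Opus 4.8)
The statement to prove is Lemma \ref{lemma D3}: for a harmonic function $u$ on a ball $B$, the scaled $L^2$ deviation from the mean over $B_{r_1}$ is controlled by $(r_1/r_2)^5$ times the corresponding quantity over $B_{r_2}$. This is the classical interior Campanato decay estimate for harmonic functions, and the plan is to reduce it to two standard ingredients: mean value / derivative bounds for harmonic functions, and a Taylor expansion argument. First I would observe that by rescaling we may assume $r_2 = 1$ and $x = 0$, so it suffices to show $\int_{B_{r_1}} |u - \bar u_{0,r_1}|^2 \leq C r_1^5 \int_{B_1} |u - \bar u_{0,1}|^2$ for all $r_1 \le 1$; for $r_1 \ge 1/2$ the inequality is trivial (the constant $C$ absorbs it), so assume $r_1 \le 1/2$.

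The key step is the following: since $u$ is harmonic, so is $\nabla u$, and the interior gradient estimate for harmonic functions gives $\sup_{B_{1/2}} |\nabla u|^2 \leq C \int_{B_1} |u - c|^2$ for any constant $c$ (apply the mean value property to the harmonic function $\nabla u$ on balls of fixed radius, then Cauchy–Schwarz; the constant $c$ can be inserted freely because $\nabla c = 0$). Taking $c = \bar u_{0,1}$ gives $\sup_{B_{1/2}} |\nabla u|^2 \leq C \int_{B_1} |u - \bar u_{0,1}|^2$. Then by the Poincaré inequality on $B_{r_1}$ (or directly, writing $u(y) - \bar u_{0,r_1}$ as an average of differences and using the mean value theorem), $\int_{B_{r_1}} |u - \bar u_{0,r_1}|^2 \leq C r_1^2 \int_{B_{r_1}} |\nabla u|^2 \leq C r_1^2 \cdot r_1^3 \sup_{B_{r_1}} |\nabla u|^2 \leq C r_1^5 \int_{B_1} |u - \bar u_{0,1}|^2$, where the factor $r_1^3$ is the volume of $B_{r_1}$ in $\mathbb{R}^3$ (recall $X$ is $3$-dimensional, which is why the exponent is $5 = 2 + 3$ rather than $n+2$ in general dimension). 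Undoing the rescaling restores the $(r_1/r_2)^5$ factor.

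I do not anticipate a genuine obstacle here, since this is textbook material (indeed the statement is quoted from \cite{Han2011EllipticLin} as Lemma 10.3.1), so the ``proof'' can reasonably just cite that reference or sketch the two lines above. The only point requiring a little care is making sure the gradient estimate is applied on the right pair of concentric balls so that the constant $C$ is genuinely universal (independent of $r_1, r_2$), which is handled cleanly by the rescaling normalization $r_2 = 1$ at the outset; the scale-invariance of both sides under $y \mapsto \lambda y$ (both sides pick up $\lambda^{n+2}$, $n=3$) is what guarantees this works. If a self-contained argument is preferred over citation, one would additionally want to state the harmonic mean value property and the fact that derivatives of harmonic functions are harmonic — both of which hold on the Euclidean $X$ — but these need no proof.
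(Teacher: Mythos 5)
Your proposal is correct, and the paper in fact offers no proof of this lemma at all --- it simply cites Lemma 10.3.1 of \cite{Han2011EllipticLin}, exactly as you anticipate is acceptable; your sketch (normalize $r_2=1$, handle $r_1\ge 1/2$ trivially since $\bar u_{0,r_1}$ minimizes $\int_{B_{r_1}}|u-c|^2$ over constants $c$, then interior gradient estimate for the harmonic function $\nabla u$ plus Poincar\'e plus the volume factor $r_1^3$ in dimension $3$) is the standard textbook argument and is sound. One cosmetic correction: under $y\mapsto\lambda y$ each side picks up a factor $\lambda^{3}$ from the volume element (not $\lambda^{5}$), but since both sides scale identically and $(r_1/r_2)^5$ is scale-invariant, your conclusion that the constant $C$ is universal is unaffected.
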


\bibliographystyle{plain}
\bibliography{references}

\end{document}